\documentclass[11pt]{article}

\usepackage[a4paper,margin=1.15in]{geometry}
\usepackage{amsmath,amssymb,amsthm,mathtools}
\usepackage{enumitem}
\usepackage{microtype}
\usepackage{cite}
\usepackage[hidelinks]{hyperref}
\hypersetup{pdftitle={Pullback metrics and homological obstructions to BLD remetrization of branched covers},pdfauthor={Deguang Zhong}}

\numberwithin{equation}{section}
\newtheorem{theorem}{Theorem}[section]
\newtheorem{lemma}[theorem]{Lemma}
\newtheorem{corollary}[theorem]{Corollary}
\newtheorem{proposition}[theorem]{Proposition}
\newtheorem{question}[theorem]{Question}
\theoremstyle{remark}
\newtheorem{remark}[theorem]{Remark}
\newtheorem{definition}[theorem]{Definition}
\newtheorem{construction}[theorem]{Construction}
\newtheorem{example}[theorem]{Example}

\newcommand{\R}{\mathbb R}
\newcommand{\C}{\mathbb C}
\newcommand{\Z}{\mathbb Z}
\newcommand{\Sn}{\mathbb S^n}
\newcommand{\HH}{\mathcal H}
\DeclareMathOperator{\diam}{diam}
\DeclareMathOperator{\dist}{dist}
\DeclareMathOperator{\id}{id}
\DeclareMathOperator{\sep}{sep}
\DeclareMathOperator{\vol}{vol}

\setlist[enumerate,1]{label=\textup{(\roman*)},leftmargin=2.1em,itemsep=3pt}
\title{\bfseries Pullback metrics and homological obstructions to BLD remetrization of branched covers}

\author{%
Deguang Zhong\\[0.4em]
\small Institute of Applied Mathematics, Shenzhen Polytechnic University,
Shenzhen 518055, China\\
\small \texttt{huachengzhon@163.com}
}

\date{}

\begin{document}

\maketitle

\begin{abstract}
We characterize admissible BLD remetrizations of branched self-covers of spheres by linear local contractibility of their canonical pullback length metrics. A relative homology obstruction involving the total degree of an inverse image component yields uniform porosity of branch values. Explicit degree-two cusp covers of $\mathbb S^n$, $n\ge3$, have uniformly porous branch values of Hausdorff dimension $n-2$ but admit no compatible Ahlfors $n$-regular, linearly locally contractible BLD source metric. Short homologically essential loops provide the obstruction, showing that porosity is necessary but insufficient for admissibility. We also prove quasisymmetric invariance and sharp degree dependence of porosity for sphere-to-sphere covers in dimension two.
\end{abstract}

\medskip
\noindent\textbf{Keywords.} Branched cover, bounded length distortion, pullback metric, linear local contractibility, essential loop, relative homology, Ahlfors regularity, porosity, quasisymmetric map.

\medskip
\noindent\textbf{2020 Mathematics Subject Classification.}
Primary 30L10; Secondary 30C65, 53C23, 54E35, 55M25, 57M12.

\section{Introduction}\label{sec1}

\subsection{The source metric problem}
Question~27 in the problem list of Heinonen and Semmes~\cite{HS1997} asks whether a prescribed branched cover can be made geometrically regular by changing the metric in its domain. We use the fixed target formulation, also stated in~\cite{GW2017}. Throughout, $\rho$ is the geodesic distance on the unit round sphere; we write $\rho_n$ when the dimension needs to be indicated.

\begin{question}\label{que1.1}
Let $n\geq3$ and let $f\colon\Sn\to\Sn$ be an orientation-preserving branched cover. Does there exist a metric $d$ inducing the original topology on the source sphere such that $(\Sn,d)$ is Ahlfors $n$-regular and linearly locally contractible, and $f\colon(\Sn,d)\to(\Sn,\rho)$ has bounded length distortion?
\end{question}

A metric as in Question~\ref{que1.1} is called \emph{admissible} for $f$. The target metric is fixed, and a source metric is \emph{compatible} if it induces the original topology. Our BLD convention requires the length inequalities for every continuous path, with infinite lengths allowed; see~\eqref{eq2.3}.

A negative answer was announced by Guo and Williams in~\cite[Corollary~3.3]{GW2016a} and stated in~\cite[Corollary~1.6]{GW2017}; their pullback paper~\cite[Theorem~8.6]{GW2016b} also treats the obstruction from positive measure branch values. We make no claim to the first negative answer. The purpose here is to connect an explicit essential loop obstruction with the canonical metric criterion and the quantitative restrictions on branch values. This connection separates two issues: whether branch values have holes at every small scale, and whether the inverse images of those neighborhoods can be contracted at a comparable scale.

\subsection{An explicit homological obstruction}
The first result constructs the cover directly and excludes a larger class of source metrics than those in Question~\ref{que1.1}.

\begin{theorem}\label{thm1.2}
For every $n\geq3$ there is an orientation-preserving degree-two branched cover
\[
 F_n\colon\Sn\longrightarrow\Sn
\]
for which no compatible source metric is simultaneously quasiconvex, linearly locally contractible, and makes $F_n$ BLD to the fixed round target.
\end{theorem}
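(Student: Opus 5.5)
We will build $F_n$ as a degree-two cover branched over a codimension-two set $B\subset\Sn$ that has a \emph{cusp}. We then show that at the cusp tip the inverse images of small balls carry loops that are short in any BLD source metric but are homologically essential at a comparable scale.

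\textbf{Construction.} Write $\Sn$ as the join $\mathbb S^1*\mathbb S^{n-2}$, equivalently as the one-point compactification of $\R^2\times\R^{n-2}$. Take the branch set $B$ to be a topologically tame $(n-2)$-sphere that is smooth except at one point $p$. Near $p$, $B$ looks like a cusp: in coordinates $(z,y)\in\C\times\R^{n-2}$ with $|y|\le1$,
\[
B=\{(z,y): z=c(y)\},
\]
where $c(y)$ traces a closed curve around $0$ of radius about $|y|^{2}$ in the $z$-plane. The construction is arranged so that the meridian loop $\mu_r$ around $B$ in the slice $\{|y|=r\}$ has length about $r^2$. At the same time, $\mu_r$ must be homologically nontrivial in $B(p,Cr)\setminus B$ for every fixed $C$: it links $B$ once, and $B$ passes through the whole ball $B(p,Cr)$.

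Because $\Sn\setminus B$ has $H_1=\Z$ (Alexander duality) generated by the meridian, the reduction mod $2$ gives a double cover of $\Sn\setminus B$. We complete it, by the Fox completion, to a branched cover $F_n\colon\Sn\to\Sn$. The source is again $\Sn$ because $B$ is an unknotted sphere in a suitable sense: it is the $2$-fold cyclic branched cover of an unknot, topologically, with the cusp only a metric feature. Near smooth points $F_n$ is locally $(z,y)\mapsto(z^2,y)$, so it is orientation preserving.

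\textbf{Obstruction.} Suppose $d$ is a compatible metric that is quasiconvex and linearly locally contractible (LLC with constant $\lambda$), and $F_n$ is $L$-BLD. Lift $\mu_r$ to a closed loop $\tilde\mu_r$. Since $\mu_r$ links $B$ once, it lifts to a path whose endpoints are the two preimages; traverse it twice to get a closed lift whose image wraps $\mu_r$ twice. By BLD, $\ell_d(\tilde\mu_r)\le 2L\,\ell_\rho(\mu_r)\lesssim r^2$, so $\tilde\mu_r$ lies in a $d$-ball $\tilde B$ of radius $s\lesssim r^2$. LLC contracts $\tilde\mu_r$ inside $\lambda\tilde B$, which bounds $\tilde\mu_r$ by a singular $2$-chain $\sigma$ supported in $\lambda\tilde B$.

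Now push $\sigma$ forward. Quasiconvexity plus BLD shows that $F_n$ is Lipschitz, since $\rho$ is a length metric and lengths of quasigeodesics are controlled. Hence $F_n(\lambda\tilde B)\subset B(F_n(\tilde x),CL\lambda r^2)\subset B(p,C'r)$ for small $r$. The key topological step is a relative homology computation: $F_{n\#}\sigma$ is a chain in $B(p,C'r)$ bounding $2\mu_r$. Using the transfer/total degree of the component $V$ of $F_n^{-1}(B(p,C'r))$ containing $\tilde\mu_r$, we get a relative class in $H_2(B(p,C'r),B(p,C'r)\setminus B)$, and intersecting with $B$ gives an equation in $\Z/2$ or $\Z$ that is violated. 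Concretely, the linking of $2\mu_r$ with the local piece of $B$ should be odd relative to the total degree of $V$, because the local degree at the branch sheet is $2$ while the loop is traversed once upstairs. So $\tilde\mu_r$ cannot bound in $V$ at scale $\lambda s$, which contradicts LLC.

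\textbf{Main obstacle.} The delicate step is choosing the cusp so that three things hold at once:
\begin{enumerate}
\item meridians have length $o(r)$ at a scale where the ambient obstruction still lives, which is the cusp's essential feature;
\item the source remains a topological sphere;
\item the homological invariant really does not vanish upstairs, not just downstairs.
\end{enumerate}
Point (iii) is the subtle one. I would compute via the lifted loop $\tilde\mu_r$ being a generator of $H_1(V\setminus F_n^{-1}B)$ and check that its image in $H_1(V)$ is nonzero. This requires $V$ to be a small-scale neighborhood of a twisted (cusp) piece of the branch set, not a ball. I would first try the explicit model in which $c(y)$ winds once, so that $V$ deformation-retracts to a circle, and verify directly that $\tilde\mu_r$ represents its generator.
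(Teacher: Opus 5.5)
The gap is in the homology class you use, and no choice of cusp repairs it. Your $\tilde\mu_r$ is the full preimage of a closed loop $\mu_r$ with odd monodromy. A degree-two branched cover is the quotient by its deck involution $\tau$, and the full preimage of a closed loop is its transfer: $[\tilde\mu_r]=\mathrm{tr}[\mu_r]$ in $H_1(F_n^{-1}(U);\Z)$ for every open $U\supset\mu_r$. Take $U$ to be the contractible ball $B(p,C'r)$, or already the small ball $B(F_n(\tilde x),CL\lambda r^2)$ from your Lipschitz bound. Then $[\mu_r]=0$ in $H_1(U)$, so $\tilde\mu_r$ is null-homologous in $F_n^{-1}(U)$, and hence in the component $V$. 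This holds for purely topological reasons and for every metric, so the conclusion you want to contradict is simply true.

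Your linking count gives only $2=2\cdot 1$. Each intersection of a bounding chain with $F_n^{-1}(B)$ contributes local degree $2$, exactly as for the preimage of a transverse disk spanning a meridian, which is a disk bounded by $\tilde\mu_r$. Nontriviality of $\mu_r$ in $H_1(B(p,Cr)\setminus B)$ is irrelevant, because the contraction upstairs may cross $F_n^{-1}(B)$. The model of Lemma~\ref{lem3.2} shows this concretely. In the coordinate $u=(w+\zeta)/a$ the deck involution is $u\mapsto u^{-1}$, which acts by $-1$ on $H_1(\Sigma)\cong\Z$. It maps the full lift of an odd loop to a reparametrization of itself, so that lift represents $0$. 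Your planned verification in step (iii) therefore fails for every construction.

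What is needed is a nonzero class on which $\tau$ acts by $-1$ in the torsion-free group $H_1$ of the preimage; such a class is never the full preimage of a closed loop. The paper uses the full preimage of a short arc whose endpoints lie on two different strands of the branch-value set. At $s=t$ the cusp arcs $(s,\pm s^2,0)$ are joined by the segment $J_t$ of length $2t^2$. The tube $U_t$ contains a round ball of radius $c_0t$ but meets the branch values only in two product arcs. Hence $f_3^{-1}(U_t)$ is an annulus times an interval whose core circle is the full preimage of $\psi(J_t)$ (Lemmas~\ref{lem3.2} and~\ref{lem3.3}).

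The role of the cusp is to place two strands of the branch-value set $t^2$ apart inside a region of size $t$. Short meridians exist around any smooth codimension-two submanifold and give nothing. Your model is also not well defined for $n=3$, where $\{|y|=r\}$ consists of two points.

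Once this loop and neighborhood are in place, the rest of your outline agrees with Lemma~\ref{lem3.1}: the length bound from the lower BLD inequality, the Lipschitz estimate from quasiconvexity and the upper inequality, and the use of only one-dimensional homological contractibility. The spherical source is automatic if you write the cover as $h\circ q$ with an explicit ambient homeomorphism $h$ rather than a Fox completion. Dimensions $n>3$ then follow by suspension.
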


\begin{corollary}\label{cor1.3}
For every $n\geq3$ there is a branched cover $\Sn\to\Sn$ admitting no compatible Ahlfors $n$-regular, linearly locally contractible source metric for which it is BLD. Thus the answer to Heinonen--Semmes Question~27 is negative in every stated dimension.
\end{corollary}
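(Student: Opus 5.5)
The corollary follows from Theorem~\ref{thm1.2} once we show that an Ahlfors $n$-regular, linearly locally contractible metric on $\Sn$ is automatically quasiconvex. We take $f=F_n$ from that theorem. Suppose, for contradiction, that $d$ is a compatible metric on the source sphere such that $(\Sn,d)$ is Ahlfors $n$-regular and linearly locally contractible, and $F_n\colon(\Sn,d)\to(\Sn,\rho)$ is BLD. Theorem~\ref{thm1.2} excludes exactly metrics that are simultaneously quasiconvex, linearly locally contractible and BLD for $F_n$. So the only missing ingredient is quasiconvexity of $d$, possibly after replacing $d$ by a bi-Lipschitz equivalent metric. Such a replacement preserves linear local contractibility, Ahlfors regularity and the BLD property, with changed constants.

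To get quasiconvexity, I would invoke the Semmes theory. A compact, Ahlfors $n$-regular, linearly locally contractible metric $n$-manifold, here a topological $n$-sphere, supports a $(1,1)$-Poincar\'e inequality. This is Semmes' theorem, as used by Heinonen--Koskela. By the standard result of Semmes and Cheeger, a complete doubling space with a Poincar\'e inequality is quasiconvex. Hence $d$ is $C$-quasiconvex with $C$ depending only on the data.

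Alternatively, one can avoid that machinery and use the BLD structure directly. Define the pullback length metric $d_F(x,y)=\inf\{\ell_\rho(F_n\circ\gamma)\}$, where the infimum is over paths $\gamma$ joining $x$ to $y$. BLD gives $\ell_d(\gamma)\simeq\ell_\rho(F_n\circ\gamma)$ for every path, so the length metric of $d$ is comparable to $d_F$. Quasiconvexity of $d$ is then equivalent to $d\simeq d_F$. This comparison is what Semmes' theorem provides once we know $d$ is Ahlfors regular and linearly locally contractible.

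The main obstacle is this quasiconvexity step, specifically checking that the Poincar\'e-inequality theorem applies on a topological sphere with merely a compatible metric, rather than a smooth one. Semmes' result requires an orientable topological $n$-manifold, compactness, Ahlfors $n$-regularity and linear local contractibility, and all of these hold here. With quasiconvexity in hand, $d$ contradicts Theorem~\ref{thm1.2}, so no admissible metric exists for $F_n$. This gives a negative answer to Question~\ref{que1.1} in every dimension $n\ge3$.
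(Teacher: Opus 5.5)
Your main route is the paper's own argument: Corollary~\ref{cor2.3} makes any admissible metric quasiconvex, via Semmes' Poincar\'e inequality (in the Basso--Marti--Wenger form) and then the fact that a doubling space with a Poincar\'e inequality is quasiconvex, and such a metric is excluded by Theorem~\ref{thm1.2}. Your ``alternative'' paragraph is not an independent route, since rephrasing quasiconvexity as $d\simeq d_F$ is just Theorem~\ref{thm1.4}(ii) and still needs Semmes' theorem, as you concede; the proof does not depend on that paragraph.
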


In dimension three we postcompose the standard branched double cover with an explicit ambient homeomorphism whose image of the branch value circle has the two arcs $(s,\pm s^2,0)$ near a cusp. For each small $t$, a segment joining these arcs has length comparable to $t^2$, and its full inverse image is an essential circle in the inverse image of a target neighborhood of radius comparable to $t$. The latter inverse image is an annulus times an interval, as verified by an explicit algebraic model in Lemma~\ref{lem3.2}. The lower BLD inequality would make the circle short in the unknown source metric, whereas quasiconvexity and the upper inequality would keep any controlled contraction inside that inverse image. This contradicts its nonzero first homology class. Iterated suspension preserves the obstruction on spheres in every dimension $n\ge3$.

The proof of Theorem~\ref{thm1.2} in Section~\ref{sec3} uses neither a pullback metric nor porosity nor an approximate homotopy inverse. The only additional input for Corollary~\ref{cor1.3} is that an Ahlfors $n$-regular, linearly locally contractible metric sphere is quasiconvex (Corollary~\ref{cor2.3}). Independence here refers to this chain of implications; it is not a priority claim for the construction or for the negative answer.

\subsection{The canonical metric criterion}
Let $M$ be a closed connected oriented topological $n$-manifold, $n\ge2$, let $(N,\rho_N)$ be a closed connected oriented smooth Riemannian $n$-manifold, and let $f\colon M\to N$ be an orientation-preserving branched cover of degree $m\ge1$. Define the \emph{pullback length metric} by
\begin{equation}\label{eq1.1}
 d_f(x,x')=\inf\bigl\{\ell_{\rho_N}(f\circ\gamma)\colon
 \gamma\text{ a path in }M\text{ from }x\text{ to }x'\bigr\}.
\end{equation}
This is the length metric associated with the diameter pullback of Guo and Williams~\cite[Section~5.3, especially Lemma~5.10]{GW2016b}. The distinction between the two metrics and direct proofs of the needed properties are given in Section~\ref{sec4}.

\begin{theorem}\label{thm1.4}
Let $f\colon M\to N$ be as above.
\begin{enumerate}
\item $d_f$ is a compatible geodesic metric, $f\colon(M,d_f)\to(N,\rho_N)$ is $1$-BLD, and $(M,d_f)$ is Ahlfors $n$-regular with constants depending only on $n$, $m$ and $N$.
\item If $d$ is a compatible $C$-quasiconvex metric on $M$ for which $f$ is $L$-BLD, then $L^{-1}d\le d_f\le LC\,d$.
\item $f$ admits a compatible quasiconvex, linearly locally contractible metric for which it is BLD if and only if $(M,d_f)$ is linearly locally contractible. If $M$ is homeomorphic to a smooth manifold, the same holds with ``quasiconvex'' replaced by ``Ahlfors $n$-regular''. In particular, for $M=N=\Sn$, Question~\ref{que1.1} has a positive answer for $f$ if and only if $(\Sn,d_f)$ is linearly locally contractible.
\end{enumerate}
\end{theorem}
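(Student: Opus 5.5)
The proof splits along the three items of Theorem~\ref{thm1.4}, using standard properties of branched covers (discrete and open, finite local multiplicity bounded by $m$, path lifting with total degree on components of preimages of connected open sets) and the BLD convention~\eqref{eq2.3}.

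\textbf{Item (i).} I first show that $d_f$ is a finite metric inducing the topology. Finiteness: $M$ is path connected, and paths can be chosen so that $f\circ\gamma$ is rectifiable. To see this, cover $N$ by small geodesic balls, lift geodesic segments through path lifting, and use that every component of $f^{-1}(B)$ for a small ball $B$ is a normal neighborhood. Positivity of $d_f(x,x')$ for $x\ne x'$ and compatibility come from the same normal neighborhoods. Let $U(x,r)$ be the component of $f^{-1}(B(f(x),r))$ containing $x$. Any path of $\rho_N$-length $<r$ starting at $x$ stays in $U(x,r)$, and $\diam U(x,r)\to0$ as $r\to0$ in the original topology. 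Conversely, $U(x,r)\subset B_{d_f}(x,r)$ for small $r$: lifting radial geodesics from $f(x)$ gives paths of pulled-back length $<r$, and every point of $U(x,r)$ is reached this way. Thus $d_f$-balls and the sets $U(x,r)$ are comparable, which gives compatibility. Geodesicity follows from Arzel\`a--Ascoli on the compact space $(M,d_f)$, since $d_f$ is a length metric.

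For $1$-BLD, the key identity is $\ell_{d_f}(\gamma)=\ell_{\rho_N}(f\circ\gamma)$ for every path $\gamma$. The inequality $\ge$ holds because $f$ is $1$-Lipschitz, as $\rho_N(f(x),f(x'))\le d_f(x,x')$. The inequality $\le$ holds because each subarc of $\gamma$ is admissible in~\eqref{eq1.1}, so $d_f(\gamma(t_i),\gamma(t_{i+1}))\le\ell(f\circ\gamma|_{[t_i,t_{i+1}]})$; summing gives the bound.

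For Ahlfors regularity I use the Hausdorff measure $\HH^n_{d_f}$. The map $f$ restricted to $U(x,r)$ is at most $m$-to-one and $1$-Lipschitz onto $B(f(x),r)$. This gives the lower bound $\HH^n(U)\ge m^{-1}\vol B(f(x),r)\gtrsim r^n$. For the upper bound, on the open dense set where $f$ is a local homeomorphism, $f$ is a local isometry onto its image, since $d_f$ is a length metric and lengths are preserved. The branch set has $\HH^n$-measure zero, because its image $f(B_f)$ has measure zero and there is a sheet decomposition. So $\HH^n(U)\le m\,\vol B(f(x),r)$. The balls $B_{d_f}(x,r)$ are squeezed between $U(x,r)$ and $U(x,r)$ up to the comparability above. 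I expect the measure-zero claim for the branch set to be the main technical point; I would handle it by writing $M\setminus f^{-1}f(B_f)$ as a covering space and using that $f(B_f)$ is closed with empty interior. If needed, I would instead cite the corresponding pullback results of~\cite{GW2016b}.

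\textbf{Item (ii).} Let $\gamma$ be a $d$-rectifiable path joining $x$ and $x'$ with $\ell_d(\gamma)\le C\,d(x,x')$, which exists by quasiconvexity. The upper BLD inequality gives $d_f(x,x')\le\ell(f\circ\gamma)\le L\,\ell_d(\gamma)\le LC\,d(x,x')$. For the lower bound, take any path $\gamma$ from $x$ to $x'$. The lower BLD inequality gives $d(x,x')\le\ell_d(\gamma)\le L\,\ell(f\circ\gamma)$. Taking the infimum over $\gamma$ yields $L^{-1}d\le d_f$.

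\textbf{Item (iii).} For the ``if'' direction, $d_f$ itself is geodesic (hence $1$-quasiconvex), $1$-BLD, and Ahlfors $n$-regular by (i). For the ``only if'' direction, (ii) shows that $\id\colon(M,d)\to(M,d_f)$ is bi-Lipschitz. Linear local contractibility is invariant under bi-Lipschitz changes of metric, with the constant multiplied by a power of $L^2C$, so it transfers to $d_f$. For the Ahlfors version, I invoke Corollary~\ref{cor2.3}: an Ahlfors $n$-regular, linearly locally contractible metric is quasiconvex, so the first equivalence applies. The smoothness hypothesis on $M$ is presumably what that corollary needs, for instance for the Semmes-type Poincar\'e inequality argument. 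The statement for $M=N=\Sn$ is then the special case matching Question~\ref{que1.1}.
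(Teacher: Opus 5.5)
Your treatment of (ii), (iii), and of the metric, compatibility and $1$-BLD parts of (i) is essentially the paper's (Proposition~\ref{prop4.1} and the bi-Lipschitz transfer through Corollary~\ref{cor2.3}). The genuine gap is the upper Ahlfors bound. It rests on $\HH^n_{d_f}(B_f)=0$, which you deduce from $\vol_N(f(B_f))=0$. But (i) is asserted for \emph{every} topological branched cover, and branch values can have positive volume. For $f_{m,\theta}=h\circ q_m$ in Theorem~\ref{thm1.7}(i) one has $v_n(f(B_f))=\theta>0$. Since $f$ is $1$-Lipschitz from $d_f$, this gives $\HH^n_{d_f}(B_f)\ge\vol_N(f(B_f))>0$ (Remark~\ref{rem4.6}). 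Your proposed repair, that $f(B_f)$ is closed with empty interior, cannot yield measure zero; $h(E)$ itself is a counterexample. Even when $\vol_N(f(B_f))=0$, the local isometry on $M\setminus B_f$ says nothing about how $d_f$ spreads out preimages near $B_f$, so $\HH^n_{d_f}(B_f)=0$ would still need an argument.

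The missing ingredient is a diameter bound for inverse-image components, namely Lemma~\ref{lem4.4} with $L=1$: every component $C$ of $f^{-1}(B(a,t))$, $t<r_N$, has $\diam_{d_f}C\le 2mt$. Each point of $C$ is joined to the fiber $C\cap f^{-1}(a)$ by a lifted geodesic of image length $<t$. If that fiber split into two parts at $d_f$-distance $>2t$, their $t$-neighborhoods would disconnect $C$. Now cover $B(f(x),r)$ by $C(n)(r/t)^n$ balls of radius $t$, each with at most $m$ clusters. This shows that $B_{d_f}(x,r)\subset f^{-1}(B(f(x),r))$ has Hausdorff pre-measure at level $2mt$ at most $C(n)m^{n+1}r^n$. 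Letting $t\to0$ gives the bound with no information about $f(B_f)$ at all.

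Your lower bound also needs adjusting. You need the inclusion $U(x,r)\subset B_{d_f}(x,r)$ to transfer the estimate from $U(x,r)$ to the ball, and it requires $U(x,r)\cap f^{-1}(f(x))=\{x\}$. Its scale threshold therefore depends on $x$ and tends to zero as $x$ approaches a branch point through regular points. So it cannot give constants depending only on $n,m,N$. Use instead $B(f(x),r)\subset f(B_{d_f}(x,r))$, obtained by lifting geodesics from $f(x)$ starting at $x$, together with the $1$-Lipschitz property; this is uniform in $x$. The same cluster argument also gives $\diam_{d_f}M\le 2m\diam N$ and a total mass bound, which handle the remaining scales $r_N\le r\le\diam M$.
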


Thus Ahlfors regularity is automatic for the canonical metric. The remaining requirement is linear local contractibility, and admissible metrics, when they exist, are unique up to bi-Lipschitz equivalence.

\subsection{Porosity and its insufficiency}
Following~\cite{Luu1998,M1995}, with a common scale threshold made explicit, a closed set $A$ is \emph{uniformly $\delta$-porous at scales below $R_0$} if every $B(a,R)$ with $a\in A$ and $0<R<R_0$ contains a ball $B(y,\delta R)$ disjoint from $A$. Linear local contractibility~\cite{BMW2025,S1996} with \emph{contractibility data} $(\lambda,r_*)$ means that $B(x,r)\hookrightarrow B(x,\lambda r)$ is null-homotopic whenever $r<r_*$.

\begin{theorem}\label{thm1.5}
Let $(M,D)$ be a closed connected oriented topological $n$-manifold with a compatible length metric and contractibility data $(\lambda,r_*)$, let $(N,\rho_N)$ be a closed connected oriented Riemannian $n$-manifold, and let $f\colon(M,D)\to(N,\rho_N)$ be an orientation-preserving $L$-BLD branched cover of degree $m\ge2$. Then:
\begin{enumerate}
\item the branch-value set $f(B_f)$ is uniformly $\delta$-porous in $N$ at small scales, with $\delta=\delta(n,L,\lambda,m)>0$; consequently $\dim_{\mathcal H}f(B_f)\le\dim_Af(B_f)\le n-c(n,L,\lambda,m)<n$ and $\vol_N(f(B_f))=0$;
\item $f$ satisfies the Newman property of Luisto~\cite{L2019} at every branch point, with a constant depending only on $n,L,\lambda,m$, and the branch set $B_f$ is uniformly porous in $(M,D)$ at small scales.
\end{enumerate}
No measure on $M$ is specified as an assumption or used in the obstruction argument. In this finite degree setting, however, Ahlfors regularity of $M$ follows from Theorem~\ref{thm1.4}(i),(ii); thus the absence of an explicit doubling assumption is not an enlargement beyond all doubling source spaces. The scale thresholds depend in addition on $r_*$ and on $N$.
\end{theorem}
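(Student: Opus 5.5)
Suppose $f(B_f)$ fails to be uniformly $\delta$-porous; we will contradict linear local contractibility by producing a short loop that is homologically essential in a slightly larger ball. If porosity fails at $b=f(x_0)$, $x_0\in B_f$, and scale $R$, then every point of $B_{\rho_N}(b,R)$ lies within $\delta R$ of $f(B_f)$. Since $f$ is $L$-BLD and $D$ is a length metric, $D$-balls map into $\rho_N$-balls with $L$-controlled radii: $f(B_D(x,r))\subset B_{\rho_N}(f(x),Lr)$. Conversely, path lifting together with the lower length inequality gives $B_{\rho_N}(f(x),r/L)$-lifts inside the component of $f^{-1}$ of that ball containing $x$, and this component lies in $B_D(x,Lr)$ when $r$ is small. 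Write $U(y,s)$ for the component of $f^{-1}(B_{\rho_N}(y,s))$ containing $x$. Then $U(y,s)$ is a normal domain for small $s$. Its total degree $i(U)=\sum_{x'\in U\cap f^{-1}(y)} i(x',f)$ satisfies $2\le i(U)\le m$ whenever $U$ meets $B_f$.

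The key step is to compare the degree of the normal neighbourhood at two comparable scales $s<\Lambda s$, with $\Lambda=\Lambda(L,\lambda)$. Linear local contractibility makes $B_D(x,r)\hookrightarrow B_D(x,\lambda r)$ null-homotopic. Composing with $f$ and the ball inclusions above, the map
\[
H_n\bigl(U(y,s),\,U(y,s)\setminus f^{-1}(y)\bigr)\to H_n\bigl(U(y,\Lambda s),\,U(y,\Lambda s)\setminus f^{-1}(y)\bigr)
\]
behaves as follows. Alternatively one uses the relative homology of the pair $(U, U\setminus f^{-1}(f(B_f)\cap \cdot))$; the cleanest version goes through $H_1$. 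If $B_{\rho_N}(b,R)$ is $\delta R$-densely filled by branch values, we pick a point $y$ near the centre, with $B_{\rho_N}(y,\delta R)$ containing no branch values only when porosity holds. Failure of porosity at every such $y$ therefore forces the local degree structure to change across scales $\delta R$ to $R$. We make this concrete by using the pigeonhole principle on the at most $m$ integer values of $i(U)$ along a geometric sequence of scales $R,R/\Lambda,\dots,R/\Lambda^{m}$. At some consecutive pair, the local degree of the component is constant while the component still contains branch points. Lemma-type reasoning, via the path-lifting argument already used for Theorem~\ref{thm1.2}, then produces a loop $\sigma$ in $U(y,s)$ of $D$-length $\lesssim L\delta R$ whose image has nontrivial monodromy in $B_{\rho_N}(y,\Lambda s)\setminus f(B_f)$. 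Its class in $H_1$ of the outer component relative to the branch set is nonzero by the total-degree count. However, $\sigma$ lies in a $D$-ball of radius $\sim L\delta R$ and is contractible inside the $\lambda$-enlargement. When $\delta\ll (L^2\lambda\Lambda)^{-1}$, that enlargement sits in $U(y,\Lambda s)$, giving a contradiction. The resulting $\delta$ depends only on $n,L,\lambda,m$. The step I expect to be the main obstacle is this homological nontriviality: one needs a degree argument showing that a component of total degree $\ge2$ over a ball densely filled by branch values carries an essential short loop. I would first attempt it via the transfer map $H_*(B)\to H_*(U)$ and the fact that $f_*$ multiplies the fundamental class by $i(U)$.

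The remaining consequences are standard. Uniform porosity in the doubling space $N$ yields $\dim_A f(B_f)\le n-c$ (Luukkainen/Martio–Vuorinen), hence the Hausdorff bound and $\vol_N=0$. For (ii), the same length-control bounds show that $U(f(x),r/L)\subset B_D(x,L^2 r)$ with uniform constants; this is precisely Luisto's Newman property with constant depending on $n,L,\lambda,m$. Porosity of $B_f$ in $(M,D)$ follows from that of $f(B_f)$. Indeed, an empty ball $B_{\rho_N}(z,\delta R)\subset B_{\rho_N}(f(x),R)$ lifts, along a path from $f(x)$ to $z$, to a point $x'$ with $D(x,x')\le LR$. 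Since $f$ is a local homeomorphism over $B_{\rho_N}(z,\delta R)$, its lift contains $B_D(x',\delta R/L)$ and misses $B_f$. Rescaling $R$ by $L$ then gives porosity constant $\delta/L^2$.
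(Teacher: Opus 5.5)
There is a genuine gap in part~(i). The step that actually produces a hole in $f(B_f)$ is never carried out, and the mechanism you propose for it does not work. You reduce everything to finding a short loop $\sigma$ that is essential in $H_1$ of an outer cluster ``relative to the branch set''. You flag this yourself as the main obstacle, and the non-porosity hypothesis is never used to construct $\sigma$.

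Nontrivial monodromy of $f\circ\sigma$ is no obstruction to contractibility. Take $P(z)=z^3-3z$ on $\widehat\C$. Lift a small loop around the simple critical value $-2$ starting near $z=-2$, where $P$ is unramified. The result is a short closed loop bounding a small disk disjoint from $B_P$, although its image has transposition monodromy; and $d_P$ is linearly locally contractible by Proposition~\ref{prop7.8}.

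Nor does the total degree produce $H_1$ classes. The transfer from a target ball vanishes in degree one because $H_1$ of a ball is zero. Concretely, for $q_m$ with the round metric, the degree-$m$ cluster over a ball centered on $E$ is itself a round ball.

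The degree is visible only on a top-dimensional relative class, and that is how the paper uses it in Proposition~\ref{prop5.4}. Suppose every fiber of a cluster $C$ of degree $d_C\ge2$ over points of the cube $p_{y_0}(Q_r)$ had diameter at most $\varepsilon_0 r$. Then skeletal extension (Lemma~\ref{lem5.1}), which needs null-homotopies of spheres of every dimension up to $n-1$, gives $g\colon Q_r\to C$ with $f\circ g$ uniformly close to $p_{y_0}$. Hence $(f\circ g)_*[Q_r,\partial Q_r]=o_{\overline B}$. On the other hand, $g_*[Q_r,\partial Q_r]=q\,o_K$ and $f_*o_K=d_C\,o_{\overline B}$, by localizing at the compact set $K=C\cap f^{-1}(\overline B(y_0,r/4))$ (Lemma~\ref{lem5.3}). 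One may assume $K$ is connected, since otherwise two points of $C\cap f^{-1}(y_0)$ are already far apart. So $d_Cq=1$, which is impossible.

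The hole then comes from a recursion that moves the center. Pigeonholing degrees at a fixed center cannot replace it: the cluster containing a branch point has degree at least $2$ at every scale, so constancy of its degree gives nothing. In the recursion, at the value with a large fiber one shrinks the radius by $\theta$, and Lemma~\ref{lem4.4} puts the two distant fiber points in different sub-clusters. The number of clusters therefore strictly increases. After at most $m-1$ steps all clusters have degree one, which yields a ball of radius at least $\tfrac12\theta^{m-1}R$ free of branch values (Theorem~\ref{thm6.1}).

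Part~(ii) contains a second error. Once its constant is allowed to depend on $m$ as in Lemma~\ref{lem4.4}, the inclusion $U(f(x),r/L)\subset B_D(x,L^2r)$ is an \emph{upper} bound on cluster size, which every BLD map satisfies. Luisto's Newman property is the reverse: a lower bound $\diam_D\bigl(B_D(z,t)\cap f^{-1}(f(w))\bigr)\ge\varepsilon t$ for some $w$ in each ball centered at a branch point $z$. The paper obtains it from Proposition~\ref{prop5.4} again. It applies the proposition to the cluster containing $z$ over a ball of radius $(2\sqrt n+L)\theta t$; that cluster lies in $B_D(z,t)$ and has degree at least $i(z,f)\ge2$.

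Your lifting argument deducing porosity of $B_f$ from porosity of $f(B_f)$ is fine and matches the paper, as is the passage to Assouad dimension and zero volume. Both, however, rest on part~(i).
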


The key step is Proposition~\ref{prop5.4}. If a connected component $C$ of the inverse image of a target ball has total degree $d_C\ge2$, then some fiber in $C$ has diameter comparable to the radius. Assuming otherwise, skeletal extension constructs an approximate right inverse on a target cube. Its relative degree must be $1$ in the target and divisible by $d_C$ in the source. Localization at a compact connected inverse image, rather than at one point, gives this common integer coefficient (Lemma~\ref{lem5.3}). A finite recursion then separates the sheets and produces a hole in the entire branch value set.

The cusp construction shows that this necessary condition does not characterize admissibility.

\begin{corollary}\label{cor1.6}
For every $n\ge3$, the degree-two cover $F_n$ of Theorem~\ref{thm1.2} can be chosen so that its branch value set $A_n=F_n(B_{F_n})$ has the following properties:
\begin{enumerate}
\item $A_n$ is ambiently homeomorphic to the standard equator $\mathbb S^{n-2}\subset\Sn$ and is contained in a round great $(n-1)$-sphere;
\item $A_n$ is uniformly $1/4$-porous at all scales below $\pi$, and $\dim_{\mathcal H}A_n=n-2$;
\item $(\Sn,d_{F_n})$ is a compatible Ahlfors $n$-regular geodesic space and $F_n$ is $1$-BLD from it, but this space is not linearly locally contractible.
\end{enumerate}
In particular, uniform porosity of branch values, even together with the dimension and ambient topology in \textup{(i)}--\textup{(ii)}, does not imply admissibility.
\end{corollary}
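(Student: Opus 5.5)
The plan is to work with the explicit cusp construction behind Theorem~\ref{thm1.2}: choose the ambient homeomorphism so that its image set $A_n$ has the claimed geometry, and then read off (iii) from Theorems~\ref{thm1.2} and~\ref{thm1.4}.

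\textbf{The set $A_3$ and item (i).} In dimension three, let $A_3$ be the image of the branch circle of the standard double cover under an ambient homeomorphism $h$. I will choose $h$ so that $A_3$ lies in a great $2$-sphere, which I identify with the plane $\{x_3=0\}$ near the cusp. Away from the cusp, $A_3$ is a smooth closed curve. Near the cusp it consists of the two arcs $(s,\pm s^2,0)$, $0\le s\le s_0$, meeting at the origin. Such a curve is a Jordan curve in a $2$-sphere, so by the Schoenflies theorem it is ambiently homeomorphic to a great circle inside that $2$-sphere. Extending this homeomorphism by coning across the two hemispheres gives an ambient homeomorphism of $\mathbb S^3$. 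This is (i) for $n=3$. For $n\ge4$ I iterate suspension. The suspension of a great $(k-1)$-sphere sitting in a great $k$-sphere of $\mathbb S^{k}$ is a set of the same kind one dimension up, if suspension is realized geometrically as the spherical join with the two poles. Since the suspension of the standard branched double cover is again a branched double cover with branch set the suspended branch set, (i) holds in every dimension.

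\textbf{Item (ii), porosity and dimension.} The useful observation is that $A_n$ is contained in a great $(n-1)$-sphere $\Sigma$. For $a\in A_n$ and $0<R<\pi$, move from $a$ along the geodesic normal to $\Sigma$ a distance about $R/2$. The ball of radius $R/4$ about that point stays inside $B(a,R)$ and does not meet $\Sigma$, hence misses $A_n$. I expect $R<\pi$ to be exactly what keeps this normal geodesic from wrapping around the sphere. The constant $1/4$ requires a short check via the spherical distance to $\Sigma$, which is $\arcsin$ of a linear coordinate. Near $R\approx\pi$ I would adjust the normal offset, for instance to $\min(R/2,\pi/2)$.

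For the dimension, $A_3$ is a finite union of Lipschitz arcs, so $\dim_{\mathcal H}A_3=1$. After suspension, $A_n$ is a finite union of bi-Lipschitz images of pieces of $(\text{arc})\times[0,1]^{n-3}$ away from the poles, together with two points. Hausdorff dimension is stable under such unions, so $\dim_{\mathcal H}A_n=n-2$. The point to verify is that the join map is locally bi-Lipschitz away from the poles.

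\textbf{Item (iii).} Theorem~\ref{thm1.4}(i) gives that $(\mathbb S^n,d_{F_n})$ is a compatible, Ahlfors $n$-regular geodesic space and that $F_n$ is $1$-BLD from it. Suppose this space were linearly locally contractible. Being geodesic, it is $1$-quasiconvex, and $F_n$ is BLD from it. That contradicts Theorem~\ref{thm1.2}, applied to the same cover $F_n$.

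\textbf{Main obstacle.} The hard step is confirming that the suspension used for (i)--(ii), which places $A_n$ inside a great sphere, is the same $F_n$ for which Theorem~\ref{thm1.2} was proved. This means the cusp obstruction must be preserved by this particular geometric suspension, with essential loops whose length is about $t^2$ at scale $t$. My first attempt would be to run the proof of Theorem~\ref{thm1.2} on the join model directly. Near a cusp point away from the poles, the join is locally bi-Lipschitz to $(\text{cusp model})\times\R^{n-3}$, and the three-dimensional argument of Lemma~\ref{lem3.2}, taken in a product with a cube, carries over.
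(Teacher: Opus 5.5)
Your proposal is correct and follows the paper's proof. Items (i)--(ii) are Proposition~\ref{prop6.6}: the suspended ambient homeomorphism, a Lipschitz parametrization by an arc times a ball, and the point $y=\cos(R/2)a+\sin(R/2)\nu$ at distance $R/2$ from the great sphere, which needs no adjustment since $R<\pi$. Item (iii) is Theorem~\ref{thm1.4}(i) combined with Theorem~\ref{thm1.2}. Your ``main obstacle'' does not arise: the $F_n$ of Theorem~\ref{thm1.2} is by definition~\eqref{eq3.8} the spherical join of $f_3$, its branch curve already lies in the great sphere $G_3$ by the choice of stereographic pole, and $A_3=h(E)$ for the explicit $h$, so Schoenflies is unnecessary. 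One small correction: for $n\ge5$ the degenerate set of the join is the sphere $\{0\}\times\mathbb S^{n-4}$ rather than two points, which does not affect the dimension count.
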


The geometric assertions are verified in Proposition~\ref{prop6.6}. The metric conclusion combines the independent obstruction in Section~\ref{sec3} with Theorem~\ref{thm1.4}. The obstruction here persists even though the branch value set has zero $n$-dimensional measure.

\subsection{Target changes and the planar case}
Let $q_m\colon\Sn\to\Sn$ be the standard winding cover of degree $m$ branched along the equator $E\cong\mathbb S^{n-2}$, and let $v_n$ denote normalized round volume.

\begin{theorem}\label{thm1.7}
\begin{enumerate}
\item Let $n\ge3$, $m\ge2$ and $0<\theta<1$. There is a homeomorphism $h\colon\Sn\to\Sn$ with $v_n(h(E))=\theta$, and the cover $f_{m,\theta}=h\circ q_m$ admits no compatible quasiconvex, linearly locally contractible metric for which it is BLD. The pullback metric $d_{f_{m,\theta}}$ is a compatible Ahlfors $n$-regular geodesic metric on $\Sn$, and $f_{m,\theta}$ is $1$-BLD from it, but $(\Sn,d_{f_{m,\theta}})$ is not linearly locally contractible.
\item If $q_0\colon\Sn\to\Sn$ admits an admissible metric and $\Phi\colon\Sn\to\Sn$ is orientation-preserving and quasisymmetric, then $\Phi\circ q_0$ admits an admissible metric. In particular $\Phi\circ q_m$ is admissible for every orientation-preserving quasiconformal $\Phi$. In dimension three its branch-value set $\Phi(E)$ may have Hausdorff dimension larger than one.
\item For $n=2$ every orientation-preserving branched cover $\mathbb S^2\to\mathbb S^2$ admits an admissible metric, namely $d_f$.
\end{enumerate}
\end{theorem}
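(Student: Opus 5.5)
Theorem~\ref{thm1.7} has three parts, and I would prove them separately, each time reducing to results already stated: Theorem~\ref{thm1.4} (the canonical metric criterion), Theorem~\ref{thm1.5} (porosity of branch values), and the cusp obstruction of Theorem~\ref{thm1.2}.

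\emph{Part (i).} Take $E\subset\Sn$ to be the equator $\mathbb S^{n-2}$ and choose a tame Osgood-type codimension-two sphere $S\subset\Sn$ of normalized volume $\theta$. Concretely, I would build a Cantor-set-type thickening of a standard $\mathbb S^{n-2}$ inside a product chart $\mathbb S^{n-2}\times D^2$, using a planar fat arc construction in the $D^2$ factor. This is arranged so that a homeomorphism $h$ of $\Sn$ carries $E$ onto $S$ and preserves the product structure. Set $f_{m,\theta}=h\circ q_m$. Its branch value set is $h(E)$, which has positive volume. Part (i) of Theorem~\ref{thm1.5} gives $\vol_N(f(B_f))=0$ for every $L$-BLD cover from a linearly locally contractible length space. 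Hence no compatible linearly locally contractible length metric makes $f_{m,\theta}$ BLD. A quasiconvex metric can be replaced by its length metric at bi-Lipschitz cost, so the BLD property and contractibility data survive; this excludes quasiconvex metrics as well. Theorem~\ref{thm1.4}(i) then shows that $d_{f_{m,\theta}}$ is geodesic, Ahlfors $n$-regular and $1$-BLD. By Theorem~\ref{thm1.4}(iii) it cannot be linearly locally contractible. The main obstacle here is building the homeomorphism $h$ with an exact volume $\theta$ while staying ambiently tame. I would first do the construction in $\R^2$ with an arc of area $\theta'$, cross it with $\mathbb S^{n-2}$ in a tubular neighborhood, and tune the parameter continuously to hit $\theta$.

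\emph{Part (ii).} Let $d_0$ be admissible for $q_0$ and set $d=d_{\Phi\circ q_0}$. The identity $(\Sn,d_0)\to(\Sn,d)$ should be quasisymmetric, because both metrics are comparable to pulled-back target distances along paths, and $\Phi$ distorts balls quasisymmetrically. Linear local contractibility is a quasisymmetric invariant: the image of a ball $B(x,r)$ is sandwiched between two balls of comparable radii, so the contractibility data transfer with a new $\lambda$. Theorem~\ref{thm1.4}(iii) then finishes the argument. A cleaner route is to apply Theorem~\ref{thm1.4}(iii) directly and compare $d_{\Phi\circ q_0}$ locally with the quasisymmetric image of $d_{q_0}$. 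To do this I would use Theorem~\ref{thm1.4}(ii) for $q_0$, together with the fact that $\Phi$ maps balls to quasiballs. Lifting through a bounded-degree cover keeps components of preimages of quasiballs comparable to balls.

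For the claim about quasiconformal $\Phi$: $q_m$ is admissible with the round pullback metric, a cone metric that is linearly locally contractible by direct inspection. A quasiconformal map of $\Sn$ is quasisymmetric. For the dimension remark, I would cite quasiconformal images of circles in $\R^3$ with Hausdorff dimension greater than one, for example snowflake-type curves constructed as quasiconformal images (Bishop, Gehring--V\"ais\"al\"a).

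\emph{Part (iii).} In dimension two, Sto\"ilow factorization gives $f=\psi\circ R$, where $R$ is a rational map and $\psi$ is a homeomorphism. The pullback $d_f$ is locally isometric to a flat cone metric of angle $2\pi k$ at each branch point, and these points are finitely many. Such a surface with finitely many cone points is linearly locally contractible: near a cone point, a ball is a metric cone over a circle and contracts radially within itself, and away from cone points the metric is Riemannian. Compactness gives uniform data. Theorem~\ref{thm1.4}(iii) then yields admissibility.
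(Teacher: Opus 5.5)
\textbf{Part (i): the homeomorphism $h$ is not produced.} Your reductions are correct: Theorem~\ref{thm1.5}(i) together with Lemma~\ref{lem2.4} excludes quasiconvex metrics, and Theorem~\ref{thm1.4} handles $d_{f_{m,\theta}}$. The genuine gap is the one nontrivial input, the homeomorphism $h$, which your construction does not produce.
\begin{itemize}
\item If $h$ preserves the product structure of the tubular chart $\mathbb S^{n-2}\times D^2$, then $h(E)$ is either a slice $\mathbb S^{n-2}\times\{z_0\}$ or, for a fiber-preserving $h$, the graph of a continuous map $\mathbb S^{n-2}\to D^2$. Such a graph meets each fiber $\{\omega\}\times D^2$ in one point. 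Round volume has a smooth density with respect to the product measure in this chart, so by Fubini the graph has zero $n$-volume.
\item The product of a planar fat arc with $\mathbb S^{n-2}$ does have positive volume. But it is homeomorphic to $\mathbb S^{n-2}\times[0,1]$, an $(n-1)$-dimensional set, so it is not the image of $E$ under any homeomorphism.
\end{itemize}
A tame image of $E$ with positive volume must be fat in all $n$ directions at once. Obtaining exactly $\theta$ by ``tuning a parameter'' would also need a continuity statement you do not have.

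The missing idea is the Oxtoby--Ulam theorem (Theorem~\ref{thm7.2}). The measure $\nu_\theta=(1-\theta)v_n+\theta\sigma_E$ is good, so some homeomorphism satisfies $h_*\nu_\theta=v_n$, whence $v_n(h(E))=\nu_\theta(E)=\theta$ exactly. Tameness is automatic, since $h(E)$ is an ambient homeomorphic image of $E$. Precomposing with $(z,u)\mapsto(\bar z,u)$, which preserves $v_n$, $\sigma_E$ and $E$, makes $h$ orientation-preserving, as the paper's convention on branched covers requires.

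\textbf{Part (iii): the factorization is the wrong way round.} Sto\"ilow gives $f=R\circ\varphi$ with $\varphi$ a homeomorphism of the \emph{source}, not $f=\psi\circ R$ with a homeomorphism of the target. The order matters. A source homeomorphism transports the pullback metric isometrically, so $(\mathbb S^2,d_f)\cong(\widehat\C,d_R)$. A target homeomorphism changes the lengths $\ell_\rho(\psi\circ R\circ\gamma)$, so $d_{\psi\circ R}$ is not the cone metric $d_R$; to control it you would have to refactor $\psi\circ R=R'\circ\varphi'$, i.e.\ apply Sto\"ilow correctly. This is exactly the mechanism that destroys admissibility in part (i).

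With the correct factorization, $d_R$ is locally a spherical cone metric, bi-Lipschitz to (not isometric to) the flat cone of angle $2\pi k$. Also, radial contraction at the apex plus Riemannian contraction away from it does not by itself give uniform data, because the convexity radius degenerates near the cone point. The clean fix, as in the paper, is that a Euclidean cone of angle at least $2\pi$ is $\mathrm{CAT}(0)$. Hence all its balls, including those centred off the apex, are convex and contract along geodesics.

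\textbf{Part (ii) is fine and close to the paper.} Quasisymmetry of the identity $(\Sn,d_{q_0})\to(\Sn,d_{\Phi\circ q_0})$ follows, at least in the weak small-scale form that suffices, from the cluster diameter bound of Lemma~\ref{lem4.4} and the ball distortion of $\Phi^{-1}$. That is the ball-inclusion argument of Theorem~\ref{thm7.5}; the paper just skips the global quasisymmetry statement and transfers the contractions directly.
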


Part~\textup{(i)} recovers the earlier positive-volume obstruction by combining Theorem~\ref{thm1.5} with the Oxtoby--Ulam theorem. Parts~\textup{(ii)}--\textup{(iii)} supply positive results. In particular, the cusp examples and the admissible quasiconformal images of the standard branch locus show that neither ambient flatness nor Hausdorff dimension alone decides the source metric problem. Quasisymmetric invariance is a sufficient preservation principle, not a characterization of all target homeomorphisms preserving admissibility.

\subsection{The dependence on the degree}
The porosity constant in Theorem~\ref{thm1.5} depends on $m$. The next result concerns a scale threshold controlled by the listed geometric data, rather than a threshold chosen separately from the separation of the branch values of each individual map.

\begin{theorem}\label{thm1.8}
\begin{enumerate}
\item For every $n\ge2$ there are constants $\lambda_0,r_0,R_0,C_0>0$ and a fixed closed Riemannian target $N_n$ with the following property. For every $m\ge2$ and every $0<R<R_0$ there is an orientation-preserving $1$-BLD branched cover $F_m\colon M_m\to N_n$ of degree $m$, where $M_m$ is a closed geodesic $n$-manifold with contractibility data $(\lambda_0,r_0)$, and there is $y_0\in F_m(B_{F_m})$ such that every ball of radius greater than $C_0m^{-1/2}R$ contained in $B(y_0,R)$ meets $F_m(B_{F_m})$. The radius $R$ may be chosen below the threshold in Theorem~\ref{thm6.1}. In dimension two both manifolds can be spheres; for $n>2$ we use products with a flat $(n-2)$-torus.
\item For every branched cover $\mathbb S^2\to\mathbb S^2$ of degree $m\ge2$, the branch-value set is uniformly $cm^{-1/2}$-porous below a fixed small round scale, without metric assumptions on the source. Consequently the optimal degree dependence for this sphere-to-sphere class, on a scale uniform over that class, is $m^{-1/2}$.
\end{enumerate}
\end{theorem}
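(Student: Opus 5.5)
The proof splits along the two parts of Theorem~\ref{thm1.8}. Part~(i) is a construction. Part~(ii) is a counting argument on the sphere that never uses a source metric.

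For part~(i) I would start with $n=2$. I would take the target $N_2=\mathbb S^2$, or a fixed flat torus if that makes the local picture cleaner, and place about $m$ branch values so that they form an $R/\sqrt m$-net of a ball $B(y_0,R)$. Then any ball in $B(y_0,R)$ of radius greater than $C_0m^{-1/2}R$ contains one of these points, which is exactly the required conclusion. The cover $F_m$ would be a degree-$m$ branched cover whose branch values are these points. With simple branching, Riemann--Hurwitz allows up to $2m-2$ branch values on the sphere. I would build it explicitly: choose a degree-$m$ rational map with prescribed simple critical values, or glue $m$ sheets along slits joining consecutive net points in a chain, using Hurwitz's existence criterion for transposition monodromy whose product is the identity and which generates a transitive group.

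The source is given the pullback metric $d_{F_m}$. By Theorem~\ref{thm1.4}(i) the map is then $1$-BLD and the source is geodesic.

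The key issue, and what I expect to be the main obstacle, is making the contractibility data $(\lambda_0,r_0)$ uniform in $m$. I would arrange the monodromy so that near each branch value the cover is locally a simple $z\mapsto z^2$ model, and so that any two branch points of $F_m$ lying on the same sheet component are separated by a distance comparable to $R/\sqrt m$. Then every small ball in $(M_m,d_{F_m})$ is either a disk or a cone of angle at most $4\pi$ over a disk, hence uniformly linearly locally contractible at scales below $cR/\sqrt m$.

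At larger scales, up to a fixed $r_0$, I would argue that a pullback ball is contained in a union of such local pieces. Here I expect to need the pulled-back surface to be uniformly quasi-round, for instance by comparing it with a uniformly bi-Lipschitz Euclidean polyhedral sphere. This comparison is where a careful choice of the chain or grid pattern of transpositions matters, and it is what I would check first. I would also pick $R$ below the threshold of Theorem~\ref{thm6.1}. For $n>2$ I would take the product with a flat $(n-2)$-torus $T^{n-2}$. Branch values then become the net times $T^{n-2}$, the porosity bound is inherited in the first two coordinates, and contractibility of products is uniform.

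For part~(ii) I would use a counting argument. Let $f\colon\mathbb S^2\to\mathbb S^2$ have degree $m$. By Riemann--Hurwitz the branch set has at most $2m-2$ points, since the local indices sum to $2m-2$. So $f(B_f)$ is a finite set with at most $2m-2$ points. Fix a ball $B(a,R)$ with $R$ below a fixed round scale. Cover it by about $K^2$ disjoint subballs of radius $R/K$, where $K=\lceil c'\sqrt m\rceil$ is chosen so that $K^2>2m-2$. By pigeonhole, some subball misses $f(B_f)$, which gives porosity with constant $\delta\asymp m^{-1/2}$. Part~(i) shows this exponent cannot be improved, which gives sharpness.
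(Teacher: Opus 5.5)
Part (ii) is correct and is the paper's argument: Riemann--Hurwitz gives $\#f(B_f)\le 2m-2$, and a \emph{packing} (not a cover) of $B(y_0,R)$ by more than $2m-2$ disjoint balls of radius $cm^{-1/2}R$ leaves one ball free of branch values.

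The gap is in part (i), at the step you postpone: contractibility data $(\lambda_0,r_0)$ independent of $m$ and $R$. Your local cone picture only gives data at scales below $cR/\sqrt m$. That does not suffice, because the threshold of Theorem~\ref{thm6.1} is at most a fixed multiple of $r_*$. It would then fall below $R$, and the example would no longer test the degree dependence at a common scale.

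More seriously, the construction you suggest fails. Suppose sheets $k$ and $k+1$ are glued along a slit joining two nearby net points. The two seams over that slit form a Jordan curve $\gamma$ whose $d_{F_m}$-length is twice the slit length, hence comparable to $R/\sqrt m$. In a chain, $\gamma$ separates the source sphere into the part built from sheets $\le k$ and the part built from sheets $\ge k+1$. Both sides contain points over the region antipodal to the net, at $d_{F_m}$-distance bounded below by a fixed constant. A Jordan curve separating $p,q$ generates $H_1(\mathbb S^2\setminus\{p,q\};\Z)$, so $\gamma$ is not even null-homologous in a ball omitting such points. This forces $\lambda_0\gtrsim\sqrt m/R$.

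This is exactly the short essential loop mechanism of Lemma~\ref{lem3.1}, as in the annulus of Lemma~\ref{lem3.2}, where two nearby branch values carry the same transposition. Your separation condition on branch points does not exclude it. No quasi-roundness comparison can repair it either, since linear local contractibility itself fails.

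Two ideas are missing. The first is a monodromy for which every component of the inverse image of a small round ball is a disk. The second is a curvature argument that makes the data insensitive to how many branch points such a disk contains.

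For the first, Construction~\ref{con8.2} runs every slit from $a_k$ to the single far point $\infty$, with successive transpositions. By Sto\"ilow the cover is then a polynomial up to a source homeomorphism. The maximum principle makes every component over a small disk missing $\infty$ simply connected, and a small ball containing $\infty$ contains no other branch value (Lemma~\ref{lem8.4}).

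For the second, pull back the Euclidean metric of the exponential chart on such a component. All cone angles are $2\pi i(z,f)\ge 2\pi$ and the boundary is locally convex. The cone criterion and Cartan--Hadamard then make the component $\mathrm{CAT}(0)$, so its balls are convex. The $2$-bi-Lipschitz chart then gives data $(5,1/10)$ independent of $m$, $R$ and the spacing of the branch values (Proposition~\ref{prop8.5}).

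With this, no separation hypothesis is needed and the net can have mesh comparable to $R/\sqrt m$. Your product with a flat torus then handles $n>2$.
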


The examples are obtained by gluing sheets along slits with successive transpositions. Their uniform contractibility follows from the disk structure of small inverse image components (Lemma~\ref{lem8.4}) and a flat comparison metric (Proposition~\ref{prop8.5}). The sharp rate in part~\textup{(ii)} is asserted only for covers of the two-sphere by the two-sphere.

\subsection{Relation to earlier work}
The topological study of open discrete maps goes back to Sto\"ilow~\cite{St1928}, Church and Hemmingsen~\cite{CH1960}, \v Cernavski\u\i~\cite{C1964} and V\"ais\"al\"a~\cite{V1966}; path lifting is due to Floyd~\cite{F1950} and Rickman~\cite{R1973}, and~\cite{R1993} is our reference for normal domains and local index. BLD maps were introduced by Martio and V\"ais\"al\"a~\cite{MV1988}. Their metric theory was developed by Heinonen and Rickman~\cite{HR2002}, with subsequent work in~\cite{HM2015,HMZ2018,L2016,L2017,L2019}. Dimension and measure of branch sets are studied in~\cite{BH2004,HR1998,KTW2005,MR1973,Sa1975}, and Newman-type fiber bounds in~\cite{L2019,MR1983,MR1984,N1931}.

There is substantial overlap between the pullback and porosity portions of this paper and Guo--Williams. Their pullback paper~\cite[Lemmas~5.7, 5.8 and 5.10, Proposition~5.13]{GW2016b} already supplies the metric factorization, regularity and comparison principles behind Theorem~\ref{thm1.4}; its Theorem~8.6 treats positive measure branch values and failure of linear local contractibility. These qualitative conclusions are not claimed as new. Skeletal extension and approximate homotopy-inverse arguments occur in~\cite[Lemma~4.5, Theorem~4.7 and Corollary~4.8]{GW2017}; Lemma~\ref{lem5.1} is an explicit instance of that extension principle. Proposition~\ref{prop5.4} isolates the total degree of a connected inverse-image component, and Theorem~\ref{thm6.1} gives a uniform finite-degree conclusion at a common scale, to be compared with the countably porous decomposition in~\cite[Theorem~1.1]{GW2017}. Explicit constants and avoidance of analytic modulus arguments are already features of that earlier work. See also~\cite[Remark~6.7(b)]{HR2002} for the related zero-measure conclusion.

The homological arguments in Sections~\ref{sec3} and~\ref{sec5} have different roles. Section~\ref{sec3} exhibits nonzero classes in $H_1$ and excludes controlled contractions of short loops. Section~\ref{sec5} uses $H_n$ of pairs and total degree to force a large fiber under a contractibility hypothesis. Only the first argument is used for the explicit degree-two counterexample and the negative answer in Corollary~\ref{cor1.3}. Combining it with the canonical metric theory gives Corollary~\ref{cor1.6}.

The paragraph preceding~\cite[Corollary~3.3]{GW2016a} formulates the measure argument in terms of the source branch set $B_f$. Positive round measure of $B_f$ alone does not obstruct remetrization of the source: if $h(E)$ has positive round volume, the map $q_m\circ h^{-1}$ has source branch set $h(E)$ and admits the metric transported by $h$ from the round source of $q_m$. Section~1.2 of~\cite{GW2017} formulates the obstruction using positive measure of $f(B_f)$ in the fixed target and states the negative answer in Corollary~1.6. The survey~\cite[Section~8]{DPT2026} lists Question~27 as open. Luisto~\cite[Introduction]{L2019} discusses the Guo--Williams results. We retain the attribution of the negative answer to Guo and Williams and distinguish it from the explicit homological construction and quantitative results developed here.

The application of the target measure obstruction in~\cite{GW2017} can be checked directly. An admissible metric on $\Sn$ is bi-Lipschitz equivalent to a length metric $D$ by Section~\ref{sec2}. Lifting a reversed short target geodesic in a normal neighborhood gives, for all $x'$ sufficiently close to $x$,
\[
 L^{-1}D(x,x')\le\rho(f(x),f(x'))\le LD(x,x').
\]
Thus the infinitesimal linear dilatation is at most $L^2$, and the doubling and local connectivity assumptions of~\cite[Corollary~1.2]{GW2017} hold, yielding $\vol(f(B_f))=0$. Sections~\ref{sec5}--\ref{sec7} give direct proofs of the needed statements in the present finite-degree setting.

The analytic input used to pass from Ahlfors regularity and linear local contractibility to quasiconvexity is Semmes' Poincar\'e theorem~\cite{S1996}, in the form of Basso, Marti and Wenger~\cite[Corollary~1.6]{BMW2025}, together with~\cite{BB2018,HK1998}. The cusp argument otherwise uses elementary covering space topology and the explicit annulus calculation. The positive volume examples use the Oxtoby--Ulam theorem~\cite{Fa1980,LR2014,OU1941}; the quasisymmetric arguments use ball distortion~\cite{H2001}, and the fractal examples use Ahlfors' extension theorem~\cite{A1964}, generalized to higher dimensions by Tukia and V\"ais\"al\"a~\cite{TV1982}.

\subsection{Organization}
Section~\ref{sec2} fixes conventions and proves the reductions to quasiconvex and length metrics. Section~\ref{sec3} proves Theorem~\ref{thm1.2} and Corollary~\ref{cor1.3}. Section~\ref{sec4} establishes the canonical criterion in Theorem~\ref{thm1.4}. Sections~\ref{sec5}--\ref{sec6} prove the quantitative porosity theorem and complete the connection with the cusp examples in Corollary~\ref{cor1.6}. Section~\ref{sec7} treats target changes and dimension two, and Section~\ref{sec8} proves the degree estimates. Section~\ref{sec9} formulates open questions on degree dependence, the geometry of branch values, and target changes beyond quasisymmetry.

\section{Preliminaries and metric reductions}\label{sec2}

\subsection{Metric conventions}
All metric spaces are separable and all manifold metrics are compatible. Balls are open unless stated otherwise. $\HH^s_d$ denotes $s$-dimensional Hausdorff measure for a metric $d$. For a path $\gamma\colon[a,b]\to X$,
\begin{equation}\label{eq2.1}
 \ell_d(\gamma)=\sup_{a=t_0<\cdots<t_N=b}\sum_{j=1}^N d(\gamma(t_{j-1}),\gamma(t_j))\in[0,\infty].
\end{equation}
A metric $D$ is a \emph{length metric} if distances are infima of lengths of joining paths; a space is \emph{$C$-quasiconvex} if any two points are joined by a path of length at most $C$ times their distance. A compact length space is geodesic~\cite{BBI2001}. A metric measure space $(X,d,\mu)$ is \emph{Ahlfors $n$-regular} if $C_A^{-1}r^n\le\mu(B(x,r))\le C_Ar^n$ for $x\in X$, $0<r\le\diam X$; a metric space is Ahlfors $n$-regular when this holds for $\mu=\HH^n_d$. See~\cite{H2001,HKST2015}.

\subsection{Linear local contractibility and porosity}
\begin{definition}\label{def2.1}
Following~\cite{BMW2025,S1996}, $(X,d)$ is \emph{linearly locally contractible} with contractibility data $(\lambda,r_*)$, where $\lambda\ge1$ and $r_*>0$, if for every $x\in X$ and $0<r<r_*$ the inclusion $B(x,r)\hookrightarrow B(x,\lambda r)$ is null-homotopic in $B(x,\lambda r)$.
\end{definition}
The homotopies need only be continuous. Every continuous map from a sphere into $B(x,r)$ then extends to its cone with image in $B(x,\lambda r)$; this is applied to boundaries of simplices, and the source manifold is never triangulated.

Using the porosity convention of~\cite{Luu1998,M1995}, a closed set $A$ is \emph{uniformly $\delta$-porous at scales below $R_0$} if for every $a\in A$ and $0<R<R_0$ there is $B(y,\delta R)\subset B(a,R)\setminus A$. A uniformly porous subset of a Riemannian manifold has Assouad dimension strictly less than $n$~\cite[Theorem~5.2]{Luu1998} and, in particular, volume zero~\cite[Section~11]{M1995}.

\subsection{Branched covers}
A \emph{branched cover} is a continuous, open, discrete map; between oriented manifolds it is assumed orientation-preserving. Its branch set is
\[
 B_f=\{x\colon f\text{ is not a local homeomorphism at }x\};
\]
$f(B_f)$ is the branch-value set. For a branched cover between closed connected oriented $n$-manifolds the degree $m=\deg f\ge1$ is defined by $f_*[M]=m[N]$. A relatively compact domain $U\subset M$ is a \emph{normal domain} if $f(\partial U)=\partial f(U)$; it is a \emph{normal neighborhood} of $x$ if in addition $\overline U\cap f^{-1}(f(x))=\{x\}$. The local index $i(x,f)\ge1$ is the local degree on a small normal neighborhood, and
\begin{equation}\label{eq2.2}
 \sum_{x\in f^{-1}(y)}i(x,f)=\deg f.
\end{equation}
For a normal domain $U$ the sum $\mu(y,f,U)=\sum_{x\in U\cap f^{-1}(y)}i(x,f)$ is independent of $y\in f(U)$; we call it the \emph{degree of $U$}. If $U$ is a normal domain and $V\subset f(U)$ is a domain, every component of $U\cap f^{-1}(V)$ is a normal domain mapped onto $V$. These facts are proved in~\cite[Chapter~I]{R1993} and~\cite[Section~5]{V1966} using only openness, discreteness and properness, and they hold for topological manifolds. A point $x$ with $i(x,f)=1$ is a regular point, and $x\in B_f$ if and only if $i(x,f)\ge2$.

\subsection{BLD maps}
Following~\cite{HR2002,L2017,MV1988}, a branched cover $f\colon(X,d_X)\to(Y,d_Y)$ is \emph{$L$-BLD} if
\begin{equation}\label{eq2.3}
 L^{-1}\ell_{d_X}(\gamma)\le\ell_{d_Y}(f\circ\gamma)\le L\ell_{d_X}(\gamma)
\end{equation}
for every continuous path $\gamma$, lengths being allowed to be infinite. The weaker notion requiring~\eqref{eq2.3} only for rectifiable paths~\cite{HM2015,HMZ2018} is not used.

For a source metric $D$, following~\cite[Introduction]{L2019}, $f$ has the \emph{Newman property at $x_0$} if there are a neighborhood $U$ of $x_0$ and $\varepsilon>0$ such that every ball $B_D(z,t)\subset U$ with $z\in B_f$ contains a point $w$ satisfying
\[
 \diam_D\bigl(B_D(z,t)\cap f^{-1}(f(w))\bigr)\ge\varepsilon t.
\]
The centers of the tested balls must be branch points. The neighborhood base point $x_0$ is arbitrary in Luisto's definition, so the property is vacuous on a sufficiently small neighborhood of a regular point; no positive fiber-diameter estimate is asserted for balls centered at regular points.

\subsection{Homology}
Homology has integer coefficients. For an oriented $n$-manifold $M$ and $x\in M$, $H_n(M,M\setminus\{x\})\cong\Z$ with positive generator $o_x$; if $M$ is closed and connected, $[M]$ maps to $o_x$ and $H_n(M)\to H_n(M,M\setminus\{x\})$ is an isomorphism~\cite[Theorem~3.26]{Ha2002}. To define the local index, choose a normal neighborhood $U$ with $U\cap f^{-1}(f(x))=\{x\}$. Then the map of pairs $f\colon(U,U\setminus\{x\})\to(N,N\setminus\{f(x)\})$ induces multiplication by $i(x,f)$ on the local orientation generators~\cite[Proposition~2.30]{Ha2002}\cite[Chapter~I]{R1993}. The neighborhood restriction is essential: on all of $M$, other points of $f^{-1}(f(x))$ would prevent this from being a map of pairs.

\subsection{Quasisymmetric maps}
A homeomorphism $\Phi\colon X\to Y$ between metric spaces is \emph{$\eta$-quasisymmetric} if $|x-a|\le t|x-b|$ implies $|\Phi(x)-\Phi(a)|\le\eta(t)|\Phi(x)-\Phi(b)|$, for a homeomorphism $\eta\colon[0,\infty)\to[0,\infty)$. Quasiconformal self-maps of $\Sn$ are quasisymmetric and preserve null sets~\cite{H2001,V1971}.

\subsection{Quasiconvexity and intrinsic length metrics}

\begin{theorem}\label{thm2.2}
Let $X$ be a metric space homeomorphic to a closed orientable smooth $n$-manifold, $n\ge2$. If $X$ is Ahlfors $n$-regular and linearly locally contractible, then $(X,d,\HH^n_d)$ supports a weak $(1,1)$-Poincar\'e inequality.
\end{theorem}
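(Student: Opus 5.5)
This theorem is Semmes' Poincaré theorem, and I would not reprove it from scratch. The plan is to reduce to the form given by Basso, Marti and Wenger~\cite[Corollary~1.6]{BMW2025}, which removes the extra hypotheses of Semmes' original statement~\cite{S1996}. The first step is to check the hypotheses of that result. Here $X$ is a compact metric space homeomorphic to a closed orientable $n$-manifold. It is Ahlfors $n$-regular with respect to $\HH^n_d$, and it is linearly locally contractible in the sense of Definition~\ref{def2.1}.

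Semmes' theorem, as usually stated, assumes linear local contractibility at all scales up to $\diam X$, whereas Definition~\ref{def2.1} only gives it below $r_*$. The second step is therefore to upgrade the data $(\lambda,r_*)$ to data valid at every scale $r<\diam X$. Compactness does this. For $r_*\le r<\diam X$, the ratio $\diam X/r$ is bounded by $\diam X/r_*$. So a constant $\lambda'\ge \diam X/r_*$ gives $B(x,\lambda' r)=X$, provided one arranges that $X$ itself is contractible. A closed manifold is not contractible, so instead I would use the version of the condition appropriate for closed manifolds: either the statement in~\cite{BMW2025} is formulated for small scales only, or one only needs contractibility of balls of radius below a fixed fraction of $\diam X$. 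I expect the cited corollary to accept the local form with constants depending on $r_*$ and $\diam X$. Checking this matching of formulations is the main point to verify, rather than an analytic difficulty.

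With the hypotheses matched, the third step is to invoke Semmes' mechanism as packaged in~\cite{BMW2025}. Orientability and the fundamental class give, for each pair of points $x,y$, a family of ``thick'' curves joining them. This family is produced by pushing forward cycles through the controlled contractions on the skeleta of a fine cover. Together with Ahlfors regularity, it yields the pencil-of-curves estimate, and hence a weak $(1,1)$-Poincaré inequality for $(X,d,\HH^n_d)$, with constants depending on $n$, $C_A$, $\lambda$, and the ratio $r_*/\diam X$. Orientability over $\Z$ is used at exactly this stage.

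The smoothness assumption on the manifold is only needed to meet the topological hypothesis of~\cite{BMW2025}, namely a closed orientable manifold. If needed, I would note that any homeomorphism type of smooth manifold qualifies. The proof therefore amounts to a citation plus the scale bookkeeping of the second step, and that bookkeeping is the step I would examine most carefully.
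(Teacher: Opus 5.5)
Your plan is the paper's proof: cite \cite[Corollary~1.6]{BMW2025} and reconcile the scale conventions. The reconciliation you left as an expectation is immediate, because that corollary only asks that $B(x,r)$ contract in $B(x,\Lambda r)$ for $r<\diam X/\Lambda$, a fixed fraction of the diameter, so contractibility of $X$ itself never enters; choosing $\Lambda\ge\max\{\lambda,2\diam X/r_*\}$ forces $r<r_*/2$, and Definition~\ref{def2.1} then gives the contraction inside $B(x,\lambda r)\subset B(x,\Lambda r)$. One small correction: the topological hypothesis of that corollary is homeomorphism to a closed, orientable, \emph{smooth} manifold, not merely a closed orientable one, which is exactly why smoothness is assumed in the statement.
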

This is~\cite[Corollary~1.6]{BMW2025}, which reproves the theorem of Semmes~\cite{S1996}. Its topological hypotheses are exactly that $X$ be homeomorphic to a closed (compact without boundary), orientable, smooth manifold of dimension $n\ge2$; the compatible metric need not be smooth. Their convention requires contractions for $r<\diam X/\Lambda$. Definition~\ref{def2.1} implies this convention after choosing $\Lambda\ge\max\{\lambda,2\diam X/r_*\}$. Thus Theorem~\ref{thm2.2} applies to the sphere metrics used in Corollary~\ref{cor1.3}.

\begin{corollary}\label{cor2.3}
Every compatible Ahlfors $n$-regular, linearly locally contractible metric on a closed connected oriented manifold homeomorphic to a smooth manifold is quasiconvex.
\end{corollary}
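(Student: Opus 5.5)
The plan is to derive Corollary~\ref{cor2.3} directly from Theorem~\ref{thm2.2}, combined with the standard principle that a complete doubling metric measure space supporting a weak $(1,p)$-Poincar\'e inequality is quasiconvex.

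First, we check that Theorem~\ref{thm2.2} applies. Let $d$ be a compatible Ahlfors $n$-regular, linearly locally contractible metric on a closed connected oriented manifold $X$ homeomorphic to a smooth manifold. Then $(X,d)$ is homeomorphic to a closed orientable smooth $n$-manifold. Orientability transfers through the homeomorphism, since topological orientability is a homeomorphism invariant. By the remark following Theorem~\ref{thm2.2}, the contractibility data $(\lambda,r_*)$ yield the convention of~\cite{BMW2025} once we take $\Lambda\ge\max\{\lambda,2\diam X/r_*\}$. Theorem~\ref{thm2.2} therefore gives a weak $(1,1)$-Poincar\'e inequality for $(X,d,\HH^n_d)$. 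For $n=1$ the hypotheses would not apply, but the case $n\ge2$ is the one in force; for $n=1$, Ahlfors $1$-regular circles are handled separately or excluded by the standing $n\ge2$.

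Second, we verify the standing hypotheses of the quasiconvexity theorem. Ahlfors regularity implies that $\HH^n_d$ is doubling, with doubling constant at most $2^nC_A^2$ for radii up to $\diam X$; for larger radii the ball is all of $X$. Compactness of $X$ gives completeness.

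Third, we invoke the theorem that a complete doubling space with a weak $(1,p)$-Poincar\'e inequality is $C$-quasiconvex, with $C$ depending only on the doubling and Poincar\'e data~\cite{HK1998}, \cite[Theorem~8.3.2]{HKST2015}, \cite{BB2018}. The mechanism is as follows. Fix $x_0$ and $\varepsilon>0$, and let $u(x)$ be the infimum of lengths of $\varepsilon$-chains, or of rectifiable paths, from $x_0$ to $x$. This function is locally $1$-Lipschitz with upper gradient $1$ along paths. A telescoping argument on balls then gives $|u(x)-u(y)|\le C d(x,y)$, and letting $\varepsilon\to0$ produces a path of length at most $Cd(x,y)$.

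The only delicate point is the bookkeeping in the first step. One must confirm that the manifold and contractibility conventions of~\cite{BMW2025} match Definition~\ref{def2.1}, and that the measure used there is $\HH^n_d$, which is Ahlfors regular by hypothesis. The remaining steps are citations of standard results, so I expect no genuine obstacle beyond this convention matching.
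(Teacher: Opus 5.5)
Your proposal is correct and is essentially the paper's proof: Theorem~\ref{thm2.2} gives the weak $(1,1)$-Poincar\'e inequality, Ahlfors regularity and compactness make $(X,d,\HH^n_d)$ a complete doubling space, and the standard theorem that such spaces are quasiconvex finishes the argument. The paper cites \cite[Proposition~4.8 and Lemma~4.9]{BB2018} for the last step and notes that a ball of radius larger than $\diam X$ gives one global constant; in your sketch of the mechanism, use only the $\varepsilon$-chain version of $u$, since the infimum over rectifiable paths could a priori be infinite before quasiconvexity is known.
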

\begin{proof}
The space is compact, connected and complete, its Hausdorff measure is doubling, and Theorem~\ref{thm2.2} gives a Poincar\'e inequality. Quasiconvexity follows from~\cite[Proposition~4.8 and Lemma~4.9]{BB2018}; a ball of radius larger than the diameter gives one global constant.
\end{proof}

\begin{lemma}\label{lem2.4}
Let $(X,d)$ be compact and $C$-quasiconvex, and let
\[
 D(x,x')=\inf\{\ell_d(\gamma)\colon\gamma\text{ joins }x,x'\}.
\]
Then $D$ is a compatible compact length metric, $d\le D\le Cd$, and $\ell_D(\gamma)=\ell_d(\gamma)$ for every path. If $d$ has contractibility data $(\lambda_0,r_*)$ then $D$ has data $(C\lambda_0,r_*)$. Replacing $d$ by $D$ changes no BLD constant.
\end{lemma}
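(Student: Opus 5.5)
The plan is to verify each assertion of Lemma~\ref{lem2.4} directly from the definitions, in the following order.

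\emph{Metric axioms and comparison.} The function $D$ is finite because quasiconvexity supplies, for any $x,x'$, a path of length at most $C\,d(x,x')$. Symmetry and the triangle inequality follow by reversing and concatenating paths. For any path $\gamma$ joining $x,x'$ we have $\ell_d(\gamma)\ge d(x,x')$, so $D\ge d$. Quasiconvexity gives $D\le Cd$. Together these show that $D$ is a metric bi-Lipschitz equivalent to $d$. Hence $D$ is compatible, and $(X,D)$ is compact.

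\emph{Equality of lengths.} Since $D\ge d$, taking suprema over partitions gives $\ell_D(\gamma)\ge\ell_d(\gamma)$. For the reverse inequality, note that for any partition $t_0<\dots<t_N$ we have $D(\gamma(t_{j-1}),\gamma(t_j))\le\ell_d(\gamma|_{[t_{j-1},t_j]})$, because the restriction is an admissible competitor in the infimum. Summing and using additivity of $\ell_d$ over subintervals gives $\sum_j D(\cdots)\le\ell_d(\gamma)$, hence $\ell_D\le\ell_d$. This also holds when the lengths are infinite. It follows that $D$ is a length metric: $D(x,x')=\inf\ell_d(\gamma)=\inf\ell_D(\gamma)$.

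\emph{Contractibility data.} This is the only step that needs care, since the balls change. From $d\le D\le Cd$ we get $B_D(x,r)\subset B_d(x,r)$ and $B_d(x,\lambda_0 r)\subset B_D(x,C\lambda_0 r)$. For $r<r_*$, compose the inclusion $B_D(x,r)\hookrightarrow B_d(x,r)$ with the null-homotopy of $B_d(x,r)\hookrightarrow B_d(x,\lambda_0r)$ taking place inside $B_d(x,\lambda_0r)$. The result is a null-homotopy of $B_D(x,r)\hookrightarrow B_D(x,C\lambda_0r)$ with image in $B_D(x,C\lambda_0 r)$. Continuity is automatic, since the topologies agree. So the data $(C\lambda_0,r_*)$ works.

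\emph{BLD constants.} Inequality~\eqref{eq2.3} involves the source metric only through $\ell_{d}(\gamma)$, and $\ell_D(\gamma)=\ell_d(\gamma)$ for every continuous path. Therefore $f$ is $L$-BLD for $d$ if and only if it is $L$-BLD for $D$, with the same $L$, whether $D$ is used on the source or on the target.

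I expect no real obstacle. The points needing attention are the handling of infinite lengths in the length-equality step, and making sure the null-homotopy stays inside the correct $D$-ball.
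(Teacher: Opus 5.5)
Your proposal is correct and follows essentially the same route as the paper: $d\le D\le Cd$ from the definitions, the partition argument giving $\ell_D=\ell_d$, and the ball inclusions $B_D(x,r)\subset B_d(x,r)$ and $B_d(x,\lambda_0 r)\subset B_D(x,C\lambda_0 r)$ for the contractibility data. Your remark that the BLD inequalities involve the metric only through path lengths supplies the justification that the paper leaves implicit for the final sentence of the lemma.
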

\begin{proof}
The bounds $d\le D\le Cd$ are the definition of length and quasiconvexity; the triangle inequality follows by concatenation, and the topology is unchanged. Since $D\ge d$, $\ell_D\ge\ell_d$; conversely, for $\ell_d(\gamma)<\infty$ and any partition, $\sum D(\gamma(t_{j-1}),\gamma(t_j))\le\sum\ell_d(\gamma|_{[t_{j-1},t_j]})=\ell_d(\gamma)$. Hence $\ell_D=\ell_d$, so $D$ is a length metric. For contractibility use $B_D(x,s)\subset B_d(x,s)$ and $B_d(x,\lambda_0s)\subset B_D(x,C\lambda_0s)$.
\end{proof}

\subsection{Path lifting and length control}
\begin{lemma}\label{lem2.5}
Let $f\colon(X,D)\to(Y,d_Y)$ be an $L$-BLD map between complete locally compact length spaces. Every rectifiable path $\beta\colon[0,1]\to Y$ has a full lift $\alpha$ starting at any prescribed $x\in f^{-1}(\beta(0))$, and $\ell_D(\alpha)\le L\ell_{d_Y}(\beta)$. The map $f$ is $L$-Lipschitz.
\end{lemma}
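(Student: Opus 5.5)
The plan is to prove the Lipschitz bound first, then obtain lifts by a local construction from the path-lifting theorem for open discrete maps, and finally extend those local lifts to all of $[0,1]$ by a continuation argument that uses the length bound.

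\emph{Lipschitz bound.} Since $D$ is a length metric, for $x,x'\in X$ and $\varepsilon>0$ choose a path $\gamma$ from $x$ to $x'$ with $\ell_D(\gamma)\le D(x,x')+\varepsilon$. The upper inequality in~\eqref{eq2.3} gives
\[
d_Y(f(x),f(x'))\le\ell_{d_Y}(f\circ\gamma)\le L\ell_D(\gamma)\le L\bigl(D(x,x')+\varepsilon\bigr),
\]
and letting $\varepsilon\to0$ shows that $f$ is $L$-Lipschitz.

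\emph{Local lifts.} Openness and discreteness give local path lifting. I will invoke the Floyd--Rickman path-lifting theorem for open discrete maps of locally compact spaces, as in~\cite{F1950,R1973} and \cite[Chapter~II]{R1993}. It says that a maximal lift $\alpha\colon[0,c)\to X$ of $\beta|_{[0,c)}$ starting at $x$ exists, and that either $c=1$ with $\alpha$ extending to $[0,1]$, or $\alpha(t)$ leaves every compact set as $t\to c$. Completeness of $Y$ is not needed here.

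\emph{Continuation via the length bound.} By the lower inequality in~\eqref{eq2.3}, for every $s<c$ we have
\[
\ell_D(\alpha|_{[0,s]})\le L\,\ell_{d_Y}(\beta|_{[0,s]})\le L\,\ell_{d_Y}(\beta)<\infty.
\]
Hence $\alpha$ has finite total length on $[0,c)$. It follows that $\alpha(t)$ is Cauchy as $t\to c$: for $s<t<c$,
\[
D(\alpha(s),\alpha(t))\le\ell_D(\alpha|_{[s,t]})\to0\quad\text{as }s\to c.
\]
Completeness of $X$ then gives a limit point $z$, and $f(z)=\beta(c)$ by continuity. So $\alpha$ stays in the closed $D$-ball $\overline B(x,L\ell(\beta))$. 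This ball is compact because $X$ is a complete locally compact length space (Hopf--Rinow--Cohn-Vossen~\cite{BBI2001}), which contradicts escape to infinity. If $c<1$, I extend the lift by $z$ at $c$ and continue it past $c$ using a local lift from $z$ in a normal neighborhood of $z$; this contradicts maximality. Therefore $c=1$, and the lift closes up at $1$ by the same Cauchy argument.

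The bound $\ell_D(\alpha)\le L\,\ell_{d_Y}(\beta)$ is then the lower inequality of~\eqref{eq2.3} applied to $\alpha$, since $f\circ\alpha=\beta$.

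The main obstacle is making the maximal-lift statement precise for open discrete maps on general locally compact spaces, as opposed to manifolds. If the cited theorem is stated only for manifolds, which suffices for all applications in this paper, I will restrict the lemma to that case. Alternatively, I would build the lift by Zorn's lemma on partial lifts. The local lifting step uses that $f$ restricted to a small normal neighborhood is proper, open and surjective onto a neighborhood of the image point, and I would appeal to~\cite{R1973} for it.
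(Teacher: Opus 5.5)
Your proof is correct and follows essentially the same route as the paper's: take a maximal lift, use the lower BLD inequality on subpaths to make the lift Cauchy near the endpoint, invoke completeness of $X$ for the limit, contradict maximality with a local lift in a normal neighborhood, and derive the Lipschitz bound from near-geodesics in $D$. The Hopf--Rinow--Cohn-Vossen step is not needed: the Cauchy limit $z$ already rules out escape to infinity, and your local-extension argument alone gives $c=1$.
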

\begin{proof}
We first recall the topological input: a branched cover between locally compact complete length spaces has local path lifts in normal neighborhoods~\cite[Proposition~2.6]{L2017}; this is the local form of the path-lifting results of~\cite{F1950,R1973}. We explain why a rectifiable path has a full lift and why no rectifiability of the lift needs to be assumed in advance.

Starting from $x$, extend a local lift as far as possible, obtaining a maximal lift $\alpha\colon[0,T)\to X$, $0<T\le1$. Such a maximal lift can be obtained by ordering extensions of the initial lift by restriction; the union of a chain is again a lift on the union of its parameter intervals. For $0\le u<v<T$, the lower BLD inequality, which applies to every continuous path, gives
\[
 \ell_D(\alpha|_{[u,v]})
 \le L\ell_{d_Y}(\beta|_{[u,v]}),\qquad
 D(\alpha(u),\alpha(v))\le L\ell_{d_Y}(\beta|_{[u,v]}).
\]
The length function of a continuous rectifiable path is continuous. Hence the right-hand side tends to zero as $u,v\uparrow T$. Completeness of $X$ supplies a limit $x_T$, and continuity of $f$ gives $f(x_T)=\beta(T)$. Defining $\alpha(T)=x_T$ extends the lift continuously. If $T<1$, a local lift of the remaining path starting at $x_T$ extends it further, contrary to maximality. Thus $T=1$ and the extension is a full lift. Applying the same BLD inequality on $[0,1]$ proves $\ell_D(\alpha)\le L\ell_{d_Y}(\beta)$. This also explains the completeness step in~\cite[Lemma~2.8]{L2017}.

Finally, for $u,v\in X$ and $\varepsilon>0$, choose a path $\gamma$ joining them with $\ell_D(\gamma)<D(u,v)+\varepsilon$. Then
\[
 d_Y(f(u),f(v))\le\ell_{d_Y}(f\circ\gamma)
 \le L\ell_D(\gamma)<L(D(u,v)+\varepsilon).
\]
Letting $\varepsilon\downarrow0$ proves that $f$ is $L$-Lipschitz.
\end{proof}

\section{Essential loops and a cuspidal obstruction}\label{sec3}

The obstruction in this section uses only the BLD inequalities, quasiconvexity, a one-dimensional homological form of linear local contractibility, and the topology of an explicitly computed local double cover. In particular, the canonical metric of Section~\ref{sec4} is not used.

\subsection{A metric obstruction from short essential loops}

We say that $(X,d)$ satisfies \emph{one-dimensional homological linear local contractibility} with data $(\lambda,r_*)$ if $\lambda\ge1$, $r_*>0$, and the inclusion-induced map
\[
 H_1(B_d(x,r);\Z)\longrightarrow H_1(B_d(x,\lambda r);\Z)
\]
is zero for every $x\in X$ and $0<r<r_*$. Equivalently, every continuous loop in $B_d(x,r)$ bounds a singular $2$-chain in $B_d(x,\lambda r)$; the equivalence follows from the description of $H_1$ by loop classes on each path component~\cite[Theorem~2A.1]{Ha2002}. Linear local contractibility implies this condition, but no null-homotopy is required here.

\begin{lemma}\label{lem3.1}
Let $f\colon X\to(Y,\rho)$ be a fixed continuous map. Suppose that there are continuous loops $\gamma_j\colon\mathbb S^1\to X$, points $x_j$ on these loops, open sets $U_j\subset Y$, and numbers $b_j>0$ such that
\begin{enumerate}
\item $B_\rho(f(x_j),b_j)\subset U_j$ and $\gamma_j(\mathbb S^1)\subset f^{-1}(U_j)$;
\item $[\gamma_j]\ne0$ in $H_1(f^{-1}(U_j);\Z)$;
\item $0<\ell_j:=\ell_\rho(f\circ\gamma_j)<\infty$, $\ell_j\to0$, and $\ell_j/b_j\to0$.
\end{enumerate}
Then no compatible metric $d$ on $X$ is simultaneously quasiconvex, satisfies one-dimensional homological linear local contractibility, and makes $f$ BLD. In particular, there is no compatible quasiconvex, linearly locally contractible BLD source metric.
\end{lemma}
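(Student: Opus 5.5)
Assume for contradiction that $d$ is a compatible $C$-quasiconvex metric on $X$ satisfying one-dimensional homological linear local contractibility with data $(\lambda,r_*)$, and that $f$ is $L$-BLD from $(X,d)$ to $(Y,\rho)$. The plan is to show that for large $j$ the loop $\gamma_j$ lies in a small $d$-ball $B_d(x_j,r_j)$, that the enlarged ball $B_d(x_j,\lambda r_j)$ is contained in $f^{-1}(U_j)$, and hence that $[\gamma_j]$ vanishes in $H_1(f^{-1}(U_j))$, contradicting (ii).

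First, the loop is short in $d$. The lower BLD inequality in \eqref{eq2.3} applies to every continuous path, so $\ell_d(\gamma_j)\le L\ell_j<\infty$. Since $x_j$ lies on $\gamma_j$, every point of $\gamma_j(\mathbb S^1)$ is joined to $x_j$ by a subarc of length at most $L\ell_j$. Hence $\gamma_j(\mathbb S^1)\subset B_d(x_j,r_j)$ with $r_j:=2L\ell_j$, and this choice keeps the ball open even when the subarc bound is attained. Because $\ell_j\to0$, we have $r_j<r_*$ for large $j$, and the homological contractibility hypothesis shows that $[\gamma_j]$ maps to zero in $H_1(B_d(x_j,\lambda r_j))$.

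Second, the enlarged ball maps into $U_j$. Let $z\in B_d(x_j,\lambda r_j)$. By quasiconvexity there is a path $\sigma$ from $x_j$ to $z$ with $\ell_d(\sigma)\le C\lambda r_j$. The upper BLD inequality gives
\[
\rho(f(x_j),f(z))\le\ell_\rho(f\circ\sigma)\le LC\lambda r_j=2L^2C\lambda\,\ell_j .
\]
Since $\ell_j/b_j\to0$, the right-hand side is less than $b_j$ for large $j$. Then (i) gives $f(z)\in B_\rho(f(x_j),b_j)\subset U_j$. Therefore $B_d(x_j,\lambda r_j)\subset f^{-1}(U_j)$.

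Third, we conclude. The inclusion of $B_d(x_j,\lambda r_j)$ into $f^{-1}(U_j)$ induces a map on $H_1$. By functoriality, the class $[\gamma_j]$ in $H_1(f^{-1}(U_j))$ is the image of its class in $H_1(B_d(x_j,\lambda r_j))$, which is zero. This contradicts (ii). The final sentence of the lemma follows because linear local contractibility implies the one-dimensional homological version.

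The only delicate point is the second step, keeping the contracting chain inside $f^{-1}(U_j)$. There are two ways this could fail. Without quasiconvexity, the $d$-ball need not have controlled $f$-image, since $f$ need not be Lipschitz for a non-length metric. Without the scale separation $\ell_j/b_j\to0$, the image could leave $U_j$. Pairing quasiconvexity with the upper BLD bound handles both.
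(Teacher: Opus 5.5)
Your proposal is correct and follows the paper's proof essentially step for step. It uses the same radius $r_j=2L\ell_j$, the lower BLD inequality to place the loop in $B_d(x_j,r_j)$, quasiconvexity combined with the upper BLD inequality to get $B_d(x_j,\lambda r_j)\subset f^{-1}(B_\rho(f(x_j),b_j))\subset f^{-1}(U_j)$, and functoriality of the inclusions to contradict (ii).
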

\begin{proof}
Suppose that such a metric exists, and let $C,L,\lambda,r_*$ be its quasiconvexity, BLD, and homological contractibility constants. First, quasiconvexity and the upper BLD inequality imply
\begin{equation}\label{eq3.1}
 \rho(f(u),f(v))\leq LC\,d(u,v)\qquad(u,v\in X).
\end{equation}
Indeed, apply the upper length inequality to a quasiconvex path joining $u$ to $v$.

The lower BLD inequality, which applies to all continuous paths, gives
\begin{equation}\label{eq3.2}
 \ell_d(\gamma_j)\leq L\ell_j.
\end{equation}
Thus the rectifiability of the loops in the unknown source metric is a consequence, not an extra assumption. Set $r_j=2L\ell_j$. Parameterizing $\gamma_j$ from $x_j$, every point of the loop has distance at most $L\ell_j<r_j$ from $x_j$. For all sufficiently large $j$, we have
\[
 r_j<r_*\quad\hbox{and}\quad LC\lambda r_j=2L^2C\lambda\ell_j<b_j.
\]
Homological linear local contractibility makes $[\gamma_j]=0$ in $H_1(B_d(x_j,\lambda r_j);\Z)$. By~\eqref{eq3.1},
\[
 B_d(x_j,\lambda r_j)\subset
 f^{-1}\bigl(B_\rho(f(x_j),b_j)\bigr)\subset f^{-1}(U_j).
\]
The inclusion-induced map therefore sends $[\gamma_j]$ to zero in $H_1(f^{-1}(U_j);\Z)$, contrary to (ii).
\end{proof}

\subsection{The local double cover calculation}

For $R>0$ let $D_R=\{w\in\C:|w|<R\}$. We recall the needed covering-space calculation in detail. Throughout this section, \emph{simple branching} means topological equivalence, in local homeomorphism charts on both source and target, to
\[
 (z,v)\longmapsto(z^2,v),\qquad z\in\C,\quad v\in\R^{n-2}.
\]
The charts are not assumed to be smooth, Lipschitz, or length preserving. This topological notion of simple branching is preserved by postcomposition with a target homeomorphism.

\begin{lemma}\label{lem3.2}
Let $I$ be an open interval, let $0<a<R$, and let
\[
 p\colon W\longrightarrow I\times D_R
\]
be a proper branched double cover whose branch-value set is $I\times\{-a,a\}$, with simple branching along both components. Assume that each branch value has its unique ramification preimage. Then, as a branched cover, $p$ is equivalent to
\[
 \id_I\times\pi\colon I\times\Sigma\longrightarrow I\times D_R,
\]
where $\Sigma$ is an open annulus and $\pi$ is the double cover of the disk branched at $-a,a$. For every $s_0\in I$, the inverse image of
\[
 \{s_0\}\times[-a,a]
\]
is a simple closed curve representing a generator, up to sign, of $H_1(W;\Z)\cong\Z$.
\end{lemma}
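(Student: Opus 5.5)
The plan is to reduce to classical covering space theory over the complement of the branch values, pin down the monodromy using simple branching, and then fill in the ramification points. Set $B=I\times\{-a,a\}$ and $Y'=(I\times D_R)\setminus B$. By~\eqref{eq2.2} each point of $Y'$ has two preimages of index one. Since $p$ is proper, $p|\colon W\setminus p^{-1}(B)\to Y'$ is then a proper local homeomorphism with constant fiber cardinality two, hence a genuine two-sheeted covering.

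\textbf{Monodromy.} We have
\[
 \pi_1(Y')\cong\pi_1(D_R\setminus\{-a,a\})\cong\Z*\Z,
\]
freely generated by small loops $\mu_\pm$ around $\pm a$. Two-sheeted covers correspond to homomorphisms $\Z*\Z\to\Z/2$, and every index-two subgroup is normal, so equivalence classes over the base are determined by this homomorphism. Simple branching says that in topological charts $p$ is $(z,v)\mapsto(z^2,v)$ near each point of $p^{-1}(B)$. The uniqueness of the ramification preimage gives the following: the inverse image of a small meridian circle around a branch value is connected, namely the inverse image of a circle under $z\mapsto z^2$. Hence $\mu_+\mapsto1$ and $\mu_-\mapsto1$. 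I expect the main care to be needed here: the charts are only topological, so one must check that a small chart meridian is freely homotopic in $Y'$ to $\mu_\pm$ (up to orientation, which is irrelevant in $\Z/2$). This follows because the chart image of a small punctured neighborhood deformation retracts, within $Y'$, to a neighborhood of the standard meridian. The same homomorphism arises for $\id_I\times\pi$, so the covering space classification gives a homeomorphism of the unbranched parts commuting with the projections.

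\textbf{Extension over the ramification locus.} Extend this homeomorphism by sending the unique ramification preimage of $b\in B$ in $W$ to the unique one in $I\times\Sigma$. For continuity, use properness: for small connected neighborhoods $V$ of $b$, the component of $p^{-1}(V)$ containing the ramification point is a normal neighborhood, and these form a neighborhood basis. The lifted homeomorphism maps the corresponding punctured components to each other, since it commutes with projections and these components are the connected preimages. Applying the same argument to the inverse map gives a homeomorphism of branched covers.

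\textbf{Identifying $\Sigma$ and the circle.} Take the model
\[
 \Sigma=\{(z,w)\colon w^2=z^2-a^2,\ |z|<R\},\qquad \pi(z,w)=z.
\]
The loop $\mu_+\mu_-$ encircling both branch values has trivial monodromy, so the boundary circle lifts to two circles. Moreover $\chi(\Sigma)=2\chi(D_R)-2=0$. A connected orientable open surface with these properties is an annulus; alternatively, the coordinate $z=a\cosh\zeta$ exhibits $\Sigma$ directly. For the segment, over each interior point of $[-a,a]$ there are two preimages $\pm w$, and over each endpoint exactly one, so $\pi^{-1}([-a,a])$ is a simple closed curve. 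Radial deformation of $D_R$ onto the segment (for example along confocal ellipses) fixes the branch values and lifts, via the covering-homotopy property away from the branch set, to a deformation retraction of $\Sigma$ onto this curve. Hence the curve generates $H_1(\Sigma)\cong\Z$. Finally, $I$ is contractible, so $H_1(W)\cong H_1(I\times\Sigma)\cong\Z$, generated by $\{s_0\}\times\pi^{-1}([-a,a])$ up to sign.
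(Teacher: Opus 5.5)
Your proposal is correct and follows essentially the paper's route: pass to the unbranched two-sheeted covering, get nontrivial meridian monodromy from the connected local square-map model, apply the classification of coverings, extend over the ramification arcs by properness and uniqueness of the preimages, and then analyse the explicit model $w^2=z^2-a^2$. The only real variation is that you identify $\Sigma$ mainly via Riemann--Hurwitz and surface classification. The paper deliberately avoids this by using exactly your alternative coordinate $z=a\cosh(s+i\theta)$ (its $u=(z+w)/a=e^{s+i\theta}$), in which your confocal-ellipse collapse lifts explicitly to $(\theta,s)\mapsto(\theta,(1-\tau)s)$. That is also the cleaner way to finish: the collapse sends the rays $[a,R)$ and $(-R,-a]$ onto the branch values at time one, where the covering-homotopy property alone does not produce the lift.
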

\begin{proof}
An explicit model for the disk cover is
\[
 \Sigma=\{(w,\zeta)\in D_R\times\C:\zeta^2=w^2-a^2\},
 \qquad \pi(w,\zeta)=w.
\]
It is a smooth surface: the two complex partial derivatives of $\zeta^2-w^2+a^2$ cannot both vanish on its zero set, since $a>0$. Projection is a local homeomorphism when $\zeta\ne0$, and near $\zeta=0$, $w=\pm a$, it has a simple square-map branch. The map is proper, as both $w$ and $\zeta$ are bounded over compact subsets of $D_R$. There is exactly one preimage of each of $-a,a$ and two preimages of every other point.

The complement of the two branch-value arcs in $I\times D_R$ has fundamental group freely generated by meridians about the two punctures. To determine their actions, choose a small target product neighborhood $V$ about one branch value and the normal neighborhood $U$ of its unique ramification preimage supplied by the local topological square-map model. The set $U\setminus B_p$ is connected: in these charts it is the product of a punctured disk with an interval. Consequently
\[
 p\colon U\setminus B_p\longrightarrow V\setminus p(B_p)
\]
is a connected two-sheeted covering. The branch-value arc in $V$ is tame, so $\pi_1(V\setminus p(B_p))\cong\Z$, generated by a meridian. Connectedness makes its action transitive on the two sheets; hence the meridian acts by the nontrivial transposition. The same argument applies to the other arc. Thus the global monodromy is transitive and specifies a unique connected double cover of the complement, with exactly the monodromy of $\id_I\times\pi$. The classification of covering spaces, for example~\cite[Theorem~1.38]{Ha2002}, gives an equivalence away from the branch values.

This equivalence extends over the branch locus. One can see this in the local square-map model: the two sheets merge into the unique point above the branch value. Equivalently, properness and the uniqueness of that preimage force every lifted sequence approaching a branch value to converge to that point. Applying the same argument to the inverse equivalence proves that the extension is a homeomorphism. Consequently it suffices to compute the disk cover.

We calculate the topology of $\Sigma$ without invoking a surface-classification theorem. The identity
\[
 (w+\zeta)(w-\zeta)=a^2
\]
shows that $u=(w+\zeta)/a$ is nonzero. It is a coordinate with inverse
\[
 w=\frac a2(u+u^{-1}),\qquad
 \zeta=\frac a2(u-u^{-1}).
\]
Writing $u=e^{s+i\theta}$, the constraint $|w|<R$ becomes
\[
 a^2(\sinh^2s+\cos^2\theta)<R^2,
 \quad\text{or}\quad
 |s|<b(\theta):=\operatorname{arsinh}
       \sqrt{(R/a)^2-\cos^2\theta}.
\]
The function $b$ is positive and continuous because $R>a$. Thus $(\theta,s)\mapsto(\theta,s/b(\theta))$ identifies $\Sigma$ with the open annulus $\mathbb S^1\times(-1,1)$. The homotopy $(\theta,s)\mapsto(\theta,(1-\tau)s)$, $0\leq\tau\leq1$, stays in $\Sigma$ and is a strong deformation retraction onto $s=0$.

This core circle is exactly
\[
 (w,\zeta)=(a\cos\theta,ia\sin\theta),\qquad
 \pi^{-1}([-a,a])=\{(a\cos\theta,ia\sin\theta):\theta\in\R/2\pi\Z\}.
\]
It therefore generates $H_1(\Sigma;\Z)\cong\Z$. As a direct check, the continuous map $(w,\zeta)\mapsto(w+\zeta)/|w+\zeta|$ sends this circle to $e^{i\theta}$ with winding number one, so the circle cannot be null-homotopic. Taking the product with $I$ proves the lemma.
\end{proof}

The same conclusion about the homology class holds after taking a product with any ball. This will be used in higher dimensions; taking such a product does not contract the essential circle.

\subsection{An explicit cover in dimension three}

Let $q\colon\mathbb S^3\to\mathbb S^3$ be the standard orientation-preserving branched double cover. One formula, on $\mathbb S^3\subset\C\times\R^2$, is
\[
 q(z,u)=
 \begin{cases}
 (z^2/|z|,u),&z\ne0,\\
 (0,u),&z=0.
 \end{cases}
\]
It is open, has degree two, and has simple branching along the round circle $E=\{z=0\}$. The angular doubling map in this formula is topologically equivalent to the square map by a radial homeomorphism in the transverse disk.

Choose oriented stereographic coordinates $\psi\colon\R^3\to\mathbb S^3\setminus\{p_\infty\}$ in which the branch-value circle is
\[
 C_0=\{(x,y,0):(x-1)^2+y^2=1\}.
\]
Choose the stereographic pole to be a unit vector orthogonal to the two-dimensional linear subspace containing $E$. Stereographic projection then sends $E$ to a round planar circle; an oriented Euclidean similarity within that plane puts it in the displayed position. With this choice the plane $\{z=0\}$, together with the pole, corresponds to a round great two-sphere $G_3\subset\mathbb S^3$. We retain this choice when proving the porosity assertion in Proposition~\ref{prop6.6}. The round metric is smooth and uniformly comparable to the Euclidean metric on each fixed compact coordinate neighborhood used below.

Here is a completely specified homeomorphism producing a cusp. Put $c=(1,0)$ and define a radial homeomorphism $T\colon\R^2\to\R^2$ by
\[
 T(c)=c,\qquad
 T(c+v)=c+\frac{\|v\|_2}{\|v\|_1}v\quad(v\ne0).
\]
Its inverse is
\[
 T^{-1}(c)=c,\qquad
 T^{-1}(c+w)=c+\frac{\|w\|_1}{\|w\|_2}w\quad(w\ne0).
\]
It maps the circle centered at $c$ of radius one onto the diamond $|x-1|+|y|=1$. Define also
\[
 S(x,y)=(x,y|y|),\qquad
 S^{-1}(x,y)=(x,\operatorname{sgn}(y)\sqrt{|y|}).
\]
Both are proper orientation-preserving homeomorphisms. For $T$ this follows also by interpolating its positive radial factor with $1$; for $S$ it follows from the strict increase of $y\mapsto y|y|$. Extend $S\circ T$ to $\R^3$ by leaving the third coordinate unchanged. This is a proper orientation-preserving homeomorphism $H$ and therefore extends over the point at infinity to a homeomorphism $h$ of $\mathbb S^3$ in the coordinates $\psi$.

Set
\begin{equation}\label{eq3.3}
 f_3=h\circ q.
\end{equation}
This is an orientation-preserving branched double cover, with branch-value curve $\psi(\Gamma)$, where $\Gamma=H(C_0)$. In the slab $0<x<1$ it has exactly the form
\begin{equation}\label{eq3.4}
 \Gamma\cap\{0<x<1\}
 =\{(s,s^2,0):0<s<1\}\cup\{(s,-s^2,0):0<s<1\}.
\end{equation}
In particular the two arcs form a cusp at the origin. The curve is an ambiently unknotted Jordan curve because it is the image of $C_0$ under the explicitly given ambient homeomorphism $H$. No limiting construction or assertion about tameness is needed.

For $0<t<1/64$ put
\[
 I_t=(t/2,3t/2),\qquad R_t=t/4,
\]
and define
\begin{equation}\label{eq3.5}
 \Phi_t(s,w)=\left(s,\frac{s^2}{t^2}\Re w,
                         \frac{s^2}{t^2}\Im w\right),\qquad
 V_t=\Phi_t(I_t\times D_{R_t}),\qquad U_t=\psi(V_t).
\end{equation}
This is a diffeomorphism onto its image. Since $s>0$, equation~\eqref{eq3.4} gives the exact identity
\begin{equation}\label{eq3.6}
 \Phi_t^{-1}(\Gamma\cap V_t)=I_t\times\{-t^2,t^2\}.
\end{equation}
Here $t^2<R_t$. Define the segment and its endpoint by
\[
 J_t=\{(t,y,0):-t^2\leq y\leq t^2\},\qquad
 a_t=(t,t^2,0).
\]

\begin{lemma}\label{lem3.3}
There are constants $A,c_0>0$, independent of sufficiently small $t$, and a simple closed curve $\gamma_t$ in $f_3^{-1}(U_t)$ such that
\begin{enumerate}
\item $B_{\rho_3}(\psi(a_t),c_0t)\subset U_t$;
\item $\gamma_t(\mathbb S^1)=f_3^{-1}(\psi(J_t))$, and $[\gamma_t]$ generates $H_1(f_3^{-1}(U_t);\Z)\cong\Z$;
\item $0<\ell_{\rho_3}(f_3\circ\gamma_t)\leq A t^2$.
\end{enumerate}
\end{lemma}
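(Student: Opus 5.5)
The plan is to work entirely in the coordinates $\Phi_t$ and pull everything back to the model of Lemma~\ref{lem3.2}.

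\textbf{Step 1: reduction to Lemma~\ref{lem3.2}.} Consider
\[
 p=\Phi_t^{-1}\circ\psi^{-1}\circ f_3\colon f_3^{-1}(U_t)\to I_t\times D_{R_t}.
\]
It is the restriction of a branched double cover to the full inverse image of an open set, so it is proper. By~\eqref{eq3.6} its branch-value set is $I_t\times\{-t^2,t^2\}$ with $0<t^2<R_t$. Simple branching is a topological notion preserved by postcomposition with the homeomorphism $h$ and by the chart $\Phi_t^{-1}\circ\psi^{-1}$, and $q$ has simple branching. Each branch value has a unique ramification preimage, since $q$ has degree two with local index $2$ along $E$.

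Lemma~\ref{lem3.2} then applies with $a=t^2$. It gives $H_1(f_3^{-1}(U_t);\Z)\cong\Z$. It also shows that $p^{-1}(\{t\}\times[-t^2,t^2])$ is a simple closed curve representing a generator. Since $\Phi_t(t,w)=(t,\Re w,\Im w)$, this set is exactly $f_3^{-1}(\psi(J_t))$. Parametrizing it gives $\gamma_t$ and proves~(ii).

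\textbf{Step 2: part~(iii).} The curve $f_3\circ\gamma_t$ has image $\psi(J_t)$. In the model it is the core circle $(a\cos\theta,ia\sin\theta)$, whose projection to $w$ is $a\cos\theta$. Hence $f_3\circ\gamma_t$ traverses the segment $J_t$ once in each direction, and its length is twice the length of $\psi(J_t)$. Strictly, a homeomorphism $\Sigma\to$ model carries the curve to the core circle only as sets. Still, a simple closed curve mapping onto an arc by a $2$-to-$1$ map away from the endpoints covers the arc monotonically on each of its two complementary arcs. So the length is $2\ell_{\rho_3}(\psi(J_t))$, which is positive.

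Since $J_t$ has Euclidean length $2t^2$ and lies in a fixed compact coordinate region, and $\psi$ is smooth there, we get $\ell_{\rho_3}\le At^2$.

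\textbf{Step 3: part~(i).} In Euclidean coordinates I will show that $V_t$ contains the Euclidean ball $B(a_t,c't)$. For a point $(s,y,z)$ with $|s-t|<t/4$ and $|(y,z)-(t^2,0)|<c't$, we need $|s-t|<t/2$, which holds, and
\[
 \frac{t^2}{s^2}\,|(y,z)|<\frac t4 .
\]
Here $s\ge 3t/4$ gives $t^2/s^2\le 16/9$, and $|(y,z)|\le t^2+c't$. So $c'=1/16$ works once $t$ is small.

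Finally, $\psi$ is bi-Lipschitz with a uniform constant on a fixed compact neighborhood of the origin. This converts the Euclidean ball into the round ball $B_{\rho_3}(\psi(a_t),c_0t)$.

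\textbf{Main obstacle.} The main difficulty is the verification in Step 1 that the hypotheses of Lemma~\ref{lem3.2} hold exactly. The branch values must be precisely $I_t\times\{\pm t^2\}$ inside $V_t$: no other part of $\Gamma$ may enter $V_t$. This should follow from~\eqref{eq3.6} together with the fact that $V_t$ lies in the slab $0<x<1$, where~\eqref{eq3.4} describes $\Gamma$ completely.

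The other point to check is that simple branching survives the non-smooth homeomorphism $h$. This holds by the topological definition adopted before Lemma~\ref{lem3.2}.
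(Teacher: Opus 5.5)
Your proposal is correct and follows the paper's proof essentially step for step: you reduce to Lemma~\ref{lem3.2} with $a=t^2$ through the coordinates $\Phi_t^{-1}\circ\psi^{-1}$, you get (iii) from the double traversal of $\psi(J_t)$, and you get (i) from a Euclidean ball inclusion in $V_t$ plus uniform comparison with the round metric. The differences are cosmetic: you use radius $t/16$ where the paper uses $t/64$, and you add a monotonicity argument for the double traversal, which is valid but unnecessary because the equivalence in Lemma~\ref{lem3.2} commutes with the projections and so carries the core-circle parametrization over directly.
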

\begin{proof}
We first verify a Euclidean ball inclusion with a uniform constant:
\begin{equation}\label{eq3.7}
 B_{\mathrm{Euc}}(a_t,t/64)\subset V_t.
\end{equation}
Indeed, for $(s,y,z)$ in this ball we have $s\in I_t$ and
\[
 \sqrt{y^2+z^2}\leq t^2+t/64\leq t/32.
\]
At any $s\in I_t$ the transverse disk defining $V_t$ has radius
\[
 \frac{s^2}{t^2}R_t=\frac{s^2}{4t}>t/16.
\]
These inequalities prove~\eqref{eq3.7}. The centers and domains stay in a fixed small coordinate neighborhood. Uniform comparison with the round distance, and a smaller constant if necessary to stay within that chart, gives (i).

The restriction of $f_3=h\circ q$ over $U_t$ is proper. Its degree is two, and every branch value has its unique ramification preimage. It also has simple branching in the topological sense fixed above: if $\alpha$ and $\beta$ are source and target charts with $\beta\circ q\circ\alpha^{-1}(z,v)=(z^2,v)$, then
\[
 (\beta\circ h^{-1})\circ f_3\circ\alpha^{-1}(z,v)=(z^2,v).
\]
Thus postcomposition preserves the topological square-map model, without requiring $h$ to preserve any metric or smooth structure. In the coordinates~\eqref{eq3.5}, the branch values are exactly the two product arcs in~\eqref{eq3.6}. Their small local complements have fundamental group $\Z$, and the connected punctured normal neighborhoods give the meridian transpositions established in Lemma~\ref{lem3.2}. That lemma, with $a=t^2$, identifies the full inverse image of $U_t$ with an annulus times $I_t$. It also proves (ii), including that the \emph{full} inverse image of the segment is the essential core circle.

Parameterize that circle by traversing one lift of $\psi(J_t)$ from one ramification point to the other and returning along the other lift. The projection traverses the segment twice. If $M\geq1$ bounds the round norm of tangent vectors by $M$ times their Euclidean norm on the fixed coordinate neighborhood, then
\[
 \ell_{\rho_3}(f_3\circ\gamma_t)
 =2\ell_{\rho_3}(\psi(J_t))\leq4Mt^2.
\]
This proves (iii) with $A=4M$. The projected length is positive since the endpoints of the segment are distinct.
\end{proof}

\begin{proposition}\label{prop3.4}
The map $f_3$ in~\eqref{eq3.3} satisfies Theorem~\ref{thm1.2} in dimension three.
\end{proposition}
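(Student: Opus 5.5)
The plan is to apply Lemma~\ref{lem3.1} to $f=f_3\colon\mathbb S^3\to(\mathbb S^3,\rho_3)$, with the loops and neighborhoods supplied by Lemma~\ref{lem3.3}. First record that $f_3=h\circ q$ is an orientation-preserving branched double cover. The map $q$ is open, discrete, orientation-preserving and of degree two. The map $h$ is an orientation-preserving homeomorphism, since $S$, $T$ and the identity in the third coordinate preserve orientation, and the one-point extension does as well. Hence $f_3$ is open, discrete, of degree two, and orientation-preserving.

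Next, take a sequence $t_j\downarrow0$ with $t_j<1/64$ small enough that Lemma~\ref{lem3.3} applies, and set
\[
 \gamma_j=\gamma_{t_j},\qquad x_j\in\gamma_j(\mathbb S^1)\cap f_3^{-1}(\psi(a_{t_j})),\qquad U_j=U_{t_j},\qquad b_j=c_0t_j.
\]
The point $x_j$ exists because $\psi(a_{t_j})\in\psi(J_{t_j})=f_3(\gamma_j(\mathbb S^1))$ by Lemma~\ref{lem3.3}(ii); in fact $x_j$ is the ramification point over the endpoint. Condition~(i) of Lemma~\ref{lem3.1} is then Lemma~\ref{lem3.3}(i) together with $\gamma_j(\mathbb S^1)=f_3^{-1}(\psi(J_{t_j}))\subset f_3^{-1}(U_j)$, which holds because $J_{t_j}\subset V_{t_j}$. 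Condition~(ii) is the generator statement in Lemma~\ref{lem3.3}(ii): a generator of $H_1\cong\Z$ is nonzero. Condition~(iii) follows from Lemma~\ref{lem3.3}(iii), which gives $0<\ell_j\le At_j^2$. Then $\ell_j\to0$ and
\[
 \ell_j/b_j\le (A/c_0)\,t_j\to0.
\]

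Lemma~\ref{lem3.1} then shows that no compatible metric on $\mathbb S^3$ is quasiconvex, satisfies one-dimensional homological linear local contractibility, and makes $f_3$ BLD. Linear local contractibility implies the homological version, since a null-homotopy of the inclusion kills $H_1$. Therefore no compatible quasiconvex, linearly locally contractible metric makes $f_3$ BLD, which is Theorem~\ref{thm1.2} for $n=3$.

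The main point to check is that the hypotheses are used in the exact form Lemma~\ref{lem3.1} needs. The lemma requires the homology class to be nonzero in $H_1(f^{-1}(U_j))$ for the \emph{full} inverse image, and not only in a component of it. Lemma~\ref{lem3.3}(ii) supplies exactly this, because Lemma~\ref{lem3.2} identifies all of $f_3^{-1}(U_t)$ with an annulus times an interval. The lemma also requires the scale separation $\ell_j/b_j\to0$, which comes from the cusp: the loop length is of order $t^2$ while the ball radius is of order $t$. Everything else is bookkeeping of constants that are uniform in small $t$.
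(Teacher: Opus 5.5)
Your proposal is correct and follows the paper's proof essentially verbatim. You feed the loops $\gamma_{t_j}$, the neighborhoods $U_{t_j}$, the radii $b_j=c_0t_j$ and the ramification points over $\psi(a_{t_j})$ from Lemma~\ref{lem3.3} into Lemma~\ref{lem3.1}, and the cusp scaling gives $\ell_j/b_j\le (A/c_0)t_j\to0$; your extra checks (that $f_3$ is orientation-preserving of degree two, that $J_{t_j}\subset V_{t_j}$, and that linear local contractibility implies the homological version) match what the paper records after~\eqref{eq3.3} and in the final sentence of Lemma~\ref{lem3.1}.
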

\begin{proof}
Take any sequence $t_j\downarrow0$ in $(0,1/64)$, use the loops and neighborhoods from Lemma~\ref{lem3.3}, and choose $x_j$ to be the unique point above $\psi(a_{t_j})$. It lies on $\gamma_{t_j}$. With $b_j=c_0t_j$ and $\ell_j=\ell_{\rho_3}(f_3\circ\gamma_{t_j})$, we have
\[
 0<\ell_j\leq A t_j^2\longrightarrow0,
 \qquad \frac{\ell_j}{b_j}\leq\frac{A}{c_0}t_j\longrightarrow0.
\]
All the hypotheses of Lemma~\ref{lem3.1} are verified, so its conclusion gives the proposition.
\end{proof}

\subsection{Suspension and completion of the proof}

Let $n>3$ and $k=n-3$. Regard the unit sphere $\Sn$ as a subset of $\R^4\times\R^k$. Every point outside $\{0\}\times\mathbb S^{k-1}$ can be written uniquely as
\[
 (\sqrt{1-|v|^2}\,u,v),\qquad u\in\mathbb S^3,\quad |v|<1.
\]
Define
\begin{equation}\label{eq3.8}
 F_n(\sqrt{1-|v|^2}\,u,v)
 = (\sqrt{1-|v|^2}\,f_3(u),v),
 \qquad F_n(0,v)=(0,v)\quad(|v|=1).
\end{equation}
This is the iterated suspension of $f_3$. We verify the global properties explicitly, so that the higher-dimensional assertion is still about spheres rather than products with tori or other manifolds.

\begin{lemma}\label{lem3.5}
The map $F_n$ is an orientation-preserving branched double cover $\Sn\to\Sn$. On the open set $|v|<1$ it is the product $f_3\times\id$ in the displayed coordinates.
\end{lemma}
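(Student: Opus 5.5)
We work in the coordinates of \eqref{eq3.8} and first check that $F_n$ is well defined and continuous. The map
\[
\Theta\colon\mathbb S^3\times\{v\in\R^k:|v|<1\}\to\Sn\setminus(\{0\}\times\mathbb S^{k-1}),\qquad (u,v)\mapsto(\sqrt{1-|v|^2}\,u,v),
\]
is a homeomorphism. Indeed, its inverse $(x,v)\mapsto(x/|x|,v)$ is continuous wherever $x\neq0$. In these coordinates $F_n=\Theta\circ(f_3\times\id)\circ\Theta^{-1}$, which is the second assertion and gives continuity on the open set $|v|<1$. At a point $(0,v_0)$ with $|v_0|=1$, continuity follows from the estimate
\[
|F_n(x,v)-(0,v_0)|\le |x|+|v-v_0|,
\]
since $|\sqrt{1-|v|^2}\,f_3(u)|=|x|$ because $f_3$ maps $\mathbb S^3$ into $\mathbb S^3$. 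The same norm identity shows that $F_n$ preserves the fibres of $(x,v)\mapsto v$ and maps $\{|v|<1\}$ into itself. Since $f_3$ is surjective, $F_n$ is surjective.

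Next we show that $F_n$ is open and discrete. On $|v|<1$ this holds because $f_3\times\id$ is a branched cover: it is open, being a product of open maps, and discrete, since its fibres are $f_3^{-1}(y)\times\{v\}$ with $f_3^{-1}(y)$ finite. The points with $|v|=1$ are fixed, and $F_n^{-1}(0,v_0)=\{(0,v_0)\}$ because $|x|$ is preserved. Hence every fibre is finite, with at most two points. For openness at $(0,v_0)$, take the neighbourhoods $W_\epsilon=\{|x|<\epsilon,\ |v-v_0|<\epsilon\}$. Each $W_\epsilon$ is saturated, meaning $F_n^{-1}(F_n(W_\epsilon))=W_\epsilon$, and $F_n(W_\epsilon)=W_\epsilon$ by surjectivity of $f_3$ on each slice. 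So $F_n$ maps a neighbourhood base at $(0,v_0)$ onto neighbourhoods of its image, and is open there. An open discrete map is a branched cover in the sense of Section~\ref{sec2}.

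For the degree and orientation we compute a local degree at a regular value. Take $y\in\mathbb S^3\setminus f_3(B_{f_3})$ and $v=0$. The preimage of $\Theta(y,0)$ consists of the two points $\Theta(x_i,0)$, where $x_1,x_2$ are the two preimages of $y$ under $f_3$. Near each of them $F_n$ is conjugate to the local homeomorphism $f_3\times\id$. A product of an orientation-preserving local homeomorphism with the identity preserves orientation, and $\Theta$ is an orientation-preserving chart once the standard orientations are fixed compatibly. So each local index is $+1$. By \eqref{eq2.2}, $\deg F_n=2$, and the map is orientation-preserving. Alternatively one can note that the unreduced suspension multiplies the degree by $1$ at each stage.

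The one delicate point is the behaviour near the singular sphere $\{0\}\times\mathbb S^{k-1}$, where the product chart degenerates. The norm-preservation identity $|x|\mapsto|x|$ handles it: it gives continuity, the saturated neighbourhoods, and the conclusion that these points are fixed with singleton fibres. Such points have local index $2$ and so lie in $B_{F_n}$. This does not matter for the present lemma, which only asserts that $F_n$ is a branched double cover.
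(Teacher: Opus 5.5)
Your proposal is correct and follows essentially the same route as the paper: $v$- and norm-preservation give continuity, saturated neighbourhoods (your $W_\epsilon$ in place of the paper's round balls) together with surjectivity of $f_3$ give openness at the fixed sphere, singleton fibres give discreteness, and two positively oriented preimages of a regular value give degree two. One step is stated too quickly: sense-preservation at branch points. The paper justifies it by computing local degrees at nearby regular values, which all have sign $+1$ in the product region; alternatively, your singleton-fibre observation at $|v|=1$, together with $f_3\times\id$ being sense-preserving wherever $f_3$ is, settles it.
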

\begin{proof}
Continuity away from $|v|=1$ is immediate. As $|v|\to1$, the first coordinate of the image has norm $\sqrt{1-|v|^2}\to0$, which proves continuity at the remaining points. Off that set, the product description proves openness and gives fibers of at most two points. At a point $(0,v_0)$, a round ball centered there is characterized by an inequality on $v\cdot v_0$; it imposes no condition on $u$. Formula~\eqref{eq3.8} preserves $v$, and $f_3$ is onto. Consequently it maps every such ball onto itself, which proves openness also at $(0,v_0)$. The fiber over $(0,v_0)$ is a singleton, so the map is discrete everywhere.

Regular values in the product region have two preimages, and both local maps preserve orientation because $f_3$ does. Thus the global degree is two. The map is sense-preserving also at every other point: in a sufficiently small normal neighborhood, its local degree can be computed at nearby regular values, where all contributions have positive sign. This also follows directly from the local product description and the cone description at $|v|=1$.
\end{proof}

Use the smooth target coordinate map
\[
 \Theta(p,v)=(\sqrt{1-|v|^2}\,\psi(p),v)
\]
for $p$ near the origin in $\R^3$ and $|v|<1/4$. For small $t$ define
\[
 U_t^{(n)}=\Theta\bigl(V_t\times B_{\R^k}(0,t)\bigr),
 \qquad y_t^{(n)}=\Theta(a_t,0).
\]
The product formula yields the exact homeomorphism
\begin{equation}\label{eq3.9}
 F_n^{-1}(U_t^{(n)})\cong
 f_3^{-1}(U_t)\times B_{\R^k}(0,t).
\end{equation}
Let $\gamma_t^{(n)}$ be the copy of $\gamma_t$ in the slice $v=0$, and let $x_t^{(n)}$ be its point above $y_t^{(n)}$.

\begin{proof}[Proof of Theorem~\ref{thm1.2}]
For $n=3$ take $F_3=f_3$ and apply Proposition~\ref{prop3.4}. Suppose that $n>3$ and use~\eqref{eq3.8}. Lemma~\ref{lem3.5} verifies the required global covering properties.

The inclusion of the slice $v=0$ in~\eqref{eq3.9} is a homotopy equivalence, with inverse given by projection onto the first factor. Hence
\[
 [\gamma_t^{(n)}]\ne0\quad\hbox{in}\quad
 H_1(F_n^{-1}(U_t^{(n)});\Z).
\]
The target slice $\mathbb S^3\times\{0\}$ is a round great three-sphere in $\Sn$, so it has exactly the same curve lengths as $\mathbb S^3$. Consequently
\[
 0<\ell_{\rho_n}(F_n\circ\gamma_t^{(n)})
 =\ell_{\rho_3}(f_3\circ\gamma_t)\leq A t^2.
\]
By~\eqref{eq3.7}, the coordinate domain $V_t\times B_{\R^k}(0,t)$ contains the Euclidean ball of radius $t/64$ centered at $(a_t,0)$. On a fixed smaller coordinate neighborhood the map $\Theta$ and its inverse have uniformly bounded differentials. Shrinking a constant to stay in that chart, there is $c_n>0$, independent of sufficiently small $t$, such that
\[
 B_{\rho_n}(y_t^{(n)},c_nt)\subset U_t^{(n)}.
\]
Apply Lemma~\ref{lem3.1} to any sequence $t_j\downarrow0$. The ratio of image-loop length to the available target radius is at most $(A/c_n)t_j\to0$. This proves the theorem in every dimension.
\end{proof}

\begin{proof}[Proof of Corollary~\ref{cor1.3}]
If one of the maps $F_n$ admitted a source metric as in Question~27, Corollary~\ref{cor2.3} would make that metric quasiconvex. It would then have all three properties excluded by Theorem~\ref{thm1.2}. This contradiction proves the corollary.
\end{proof}

The scale mismatch in the construction is
\[
 \ell_{\rho_n}(F_n\circ\gamma_t^{(n)})\le At^2,\qquad
 B_{\rho_n}(F_n(x_t^{(n)}),c_nt)\subset U_t^{(n)},\qquad
 [\gamma_t^{(n)}]\ne0\text{ in }H_1(F_n^{-1}(U_t^{(n)});\Z),
\]
with the dimension-three notation understood when $n=3$. Any fixed quasiconvexity, BLD, and one-dimensional homological contractibility data would make the loop null-homologous inside the same inverse image for sufficiently small $t$. We next place this obstruction in the canonical metric framework; this is a consequence of the construction, not an input to it.

\section{The pullback length metric}\label{sec4}

In this section $f\colon M\to N$ is an orientation-preserving branched cover of degree $m$ from a closed connected oriented topological $n$-manifold to a closed connected oriented Riemannian $n$-manifold $(N,\rho_N)$. Choose $r_N>0$ uniformly below the convexity and injectivity radii of $N$, small enough that the normal coordinate charts used below are $2$-bi-Lipschitz and $c_Ns^n\le\vol_N(B(y,s))\le C_Ns^n$ for $s<r_N$. In particular these target balls are strongly convex. We use the normalization of Hausdorff measure for which $\HH^n_{\rho_N}=\vol_N$.

For comparison with~\cite[Section~5]{GW2016b}, set
\[
 d_f^{\rm diam}(x,x')=\inf\{\diam_{\rho_N}f(K):
 K\subset M\text{ a continuum containing }x,x'\}.
\]
Guo and Williams use this diameter pullback. Its associated length metric is exactly $d_f$, by their length-preservation statement and Lemma~5.10. Thus the two constructions have the same path lengths and are bi-Lipschitz equivalent in the finite-degree setting, but their distances need not be identical. We include direct proofs of the facts needed here.

\begin{proposition}\label{prop4.1}
\begin{enumerate}
\item $d_f$ is a metric on $M$ with $d_f\ge\rho_N\circ(f\times f)$, and it induces the manifold topology.
\item $\ell_{d_f}(\gamma)=\ell_{\rho_N}(f\circ\gamma)$ for every path $\gamma$. Hence $(M,d_f)$ is a compact geodesic space and $f\colon(M,d_f)\to(N,\rho_N)$ is $1$-BLD.
\item If $d$ is a compatible $C$-quasiconvex metric on $M$ for which $f$ is $L$-BLD, then $L^{-1}d\le d_f\le LCd$.
\end{enumerate}
\end{proposition}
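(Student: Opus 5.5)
I will prove the three parts in order, taking the topological input from~\cite{R1993} (normal domains, path lifting) and Lemma~\ref{lem2.5} as given.

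\emph{Part (i).} Symmetry and the triangle inequality for $d_f$ follow from reversal and concatenation of paths. The bound $d_f(x,x')\ge\rho_N(f(x),f(x'))$ is immediate, because $\ell_{\rho_N}(f\circ\gamma)$ dominates the distance between its endpoints. Finiteness needs a path with $\ell_{\rho_N}(f\circ\gamma)<\infty$. I will connect $x$ to $x'$ by a chain of points, each pair lying in a common normal neighborhood. Inside a normal neighborhood $U$ of $z$ with $f(U)=B(f(z),s)$, $s<r_N$, I lift the radial geodesics from $f(z)$ starting at $z$. By~\cite{R1993}, lifts of paths in $f(U)$ starting at $z$ stay in $U$ and reach every point of $U$. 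So every point of $U$ is joined to $z$ by a path whose image has length less than $s$. This gives $d_f<\infty$ and also $d_f(z,w)<2s$ on $U$.

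Positivity and compatibility come next. If $x\ne x'$ and $f(x)\ne f(x')$, the lower bound already gives $d_f(x,x')>0$. If $f(x)=f(x')$, take disjoint normal neighborhoods $U\ni x$ over $B(f(x),s)$. Any path leaving $U$ has image leaving $f(U)$, so its length is at least $s$, and hence $d_f(x,x')\ge s$. The same argument gives $B_{d_f}(x,s)\subset U$. Since small normal neighborhoods form a neighborhood basis of $x$~\cite{R1993}, the $d_f$-balls and the manifold topology generate each other. The preceding paragraph supplies the converse inclusion $U\subset B_{d_f}(x,2s)$.

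\emph{Part (ii).} For $\ell_{d_f}(\gamma)\le\ell_{\rho_N}(f\circ\gamma)$, note that each term $d_f(\gamma(t_{j-1}),\gamma(t_j))$ is at most $\ell_{\rho_N}(f\circ\gamma|_{[t_{j-1},t_j]})$; summing and using additivity of length gives the bound. The reverse inequality follows from $d_f\ge\rho_N\circ(f\times f)$ applied to partitions. Hence $\ell_{d_f}=\ell_{\rho_N}\circ f$, so $d_f$ is a length metric and $f$ is $1$-BLD. By (i), $(M,d_f)$ is compact, and a compact length space is geodesic~\cite{BBI2001}.

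\emph{Part (iii).} For the upper bound, join $x,x'$ by a $C$-quasiconvex path $\gamma$. Then
\[
 d_f(x,x')\le\ell_{\rho_N}(f\circ\gamma)\le L\ell_d(\gamma)\le LC\,d(x,x').
\]
For the lower bound, take any path $\gamma$ from $x$ to $x'$. The lower BLD inequality, valid for all continuous paths, gives
\[
 d(x,x')\le\ell_d(\gamma)\le L\ell_{\rho_N}(f\circ\gamma).
\]
Taking the infimum over $\gamma$ yields $d\le L\,d_f$.

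The main obstacle is part (i): establishing finiteness and compatibility of $d_f$ when $M$ is only a topological manifold. This depends on the normal-neighborhood facts, namely that lifts stay inside $U$ and that $U$ is reached by lifts of radial geodesics. I would quote these from~\cite[Chapter~I]{R1993}.
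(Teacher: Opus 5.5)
Your proposal is correct and follows the paper's proof essentially step for step: finiteness comes from lifted radial geodesics in normal neighborhoods plus connectedness of $M$, positivity and compatibility come from the fact that a path leaving a normal neighborhood has image length at least its radius, $\ell_{d_f}=\ell_{\rho_N}\circ f$ is the same partition argument, and (iii) uses the two BLD inequalities. The differences are cosmetic. You argue compatibility through neighborhood bases rather than sequences, and in (iii) you skip the auxiliary length metric $D$ of Lemma~\ref{lem2.4}. One point to tighten: a lift from $z$ ending at a prescribed $w\in U$ should be obtained, as in the paper, by lifting the reversed geodesic from $w$, which must end at $z$ because $U\cap f^{-1}(f(z))=\{z\}$.
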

\begin{proof}
(i) First $d_f$ is finite. At any $z\in M$, take a normal neighborhood whose image is a small strongly convex target ball centered at $f(z)$ and whose fiber over $f(z)$ consists only of $z$. Lifting the reversed radial geodesic from any image point to $f(z)$ gives a path to $z$ with finite image length; proper path lifting is applicable here by~\cite[Theorem~2]{F1950}, or~\cite[Lemma~5.4]{GW2016b}. Thus the equivalence classes of the relation ``joinable by a path of finite image length'' are open. Connectedness of $M$ implies that there is only one such class. Symmetry and the triangle inequality follow from reversal and concatenation, and $\ell_{\rho_N}(f\circ\gamma)\ge\rho_N(f(x),f(x'))$ gives the lower bound. Let $x\ne x'$ with $f(x)=f(x')=y$. Choose a normal neighborhood $U$ of $x$ with $x'\notin\overline U$, and let $\tau=\dist_{\rho_N}(y,\partial f(U))>0$. Every path from $x$ to $x'$ meets $\partial U$, whose image is $\partial f(U)$; hence $d_f(x,x')\ge\tau$. For compatibility, let $x_k\to x$ in $M$. For large $k$, $x_k\in U$ and the minimizing geodesic $\beta_k$ from $f(x_k)$ to $f(x)$ lies in $f(U)$. Its lift $\alpha_k$ starting at $x_k$ (which exists by the path-lifting theorem for discrete open maps~\cite[Theorem~2]{F1950}\cite[Lemma~5.4]{GW2016b}) is connected, starts in $U$ and cannot meet $\partial U$, so it ends in $U\cap f^{-1}(f(x))=\{x\}$; thus $d_f(x_k,x)\le\rho_N(f(x_k),f(x))\to0$. Conversely, if $d_f(x_k,x)\to0$ but $x_k\notin U$ infinitely often, then $d_f(x_k,x)\ge\tau$ for those $k$, a contradiction.

(ii) For a partition of the parameter interval, $d_f(\gamma(t_{j-1}),\gamma(t_j))\le\ell_{\rho_N}(f\circ\gamma|_{[t_{j-1},t_j]})$, so $\ell_{d_f}(\gamma)\le\ell_{\rho_N}(f\circ\gamma)$; the reverse inequality follows from $d_f\ge\rho_N\circ(f\times f)$. Then $d_f(x,x')=\inf\ell_{d_f}(\gamma)$, so $d_f$ is a length metric, geodesic by compactness, and $\ell_{d_f}=\ell_{\rho_N}\circ f$ is the $1$-BLD condition.

(iii) Let $D$ be the length metric of Lemma~\ref{lem2.4}. For every path, $L^{-1}\ell_d(\gamma)\le\ell_{\rho_N}(f\circ\gamma)=\ell_{d_f}(\gamma)\le L\ell_d(\gamma)$; take infima over paths joining $x,x'$ and use $d\le D\le Cd$.
\end{proof}

We next record the two consequences of path lifting that will be used repeatedly.

\begin{lemma}\label{lem4.2}
Let $(M,D)$ be a compatible length metric and $f\colon(M,D)\to(N,\rho_N)$ $L$-BLD. Then $B_{\rho_N}(f(x),t/L)\subset f(B_D(x,t))$ for $x\in M$ and $0<t<r_N$. Moreover, if $C$ is a component of $f^{-1}(B(y_0,s))$ and $x\in C$, then the lift starting at $x$ of any path in $B(y_0,s)$ starting at $f(x)$ stays in $C$.
\end{lemma}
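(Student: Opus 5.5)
For the first assertion I will use path lifting (Lemma~\ref{lem2.5}) along radial geodesics. Fix $x\in M$ and $0<t<r_N$, and let $z\in B_{\rho_N}(f(x),t/L)$. Since $t/L<r_N$, the balls involved lie below the convexity radius. Hence there is a unique minimizing geodesic $\beta$ from $f(x)$ to $z$, and its length is $\ell_{\rho_N}(\beta)=\rho_N(f(x),z)<t/L$.

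Both $(M,D)$ and $(N,\rho_N)$ are compact length spaces, so they are complete and locally compact. Lemma~\ref{lem2.5} therefore gives a full lift $\alpha$ of $\beta$ starting at $x$, with
\[
 \ell_D(\alpha)\le L\ell_{\rho_N}(\beta)<t.
\]
It follows that $D(x,\alpha(1))\le\ell_D(\alpha)<t$, so $\alpha(1)\in B_D(x,t)$. Since $f(\alpha(1))=z$, this proves $B_{\rho_N}(f(x),t/L)\subset f(B_D(x,t))$. The only point to check is that Lemma~\ref{lem2.5} applies without assuming in advance that the lift is rectifiable. That is exactly how the lemma is stated, because the lower BLD inequality is assumed for all continuous paths.

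For the second assertion, let $\beta$ be a path in $B(y_0,s)$ with $\beta(0)=f(x)$, and let $\alpha$ be a lift starting at $x$. Such a lift exists by the same path-lifting results; if $\beta$ is not rectifiable, use Floyd's theorem~\cite{F1950} as in Proposition~\ref{prop4.1}, although only the existence of some lift is needed here. We have $f(\alpha([0,1]))=\beta([0,1])\subset B(y_0,s)$, so $\alpha([0,1])\subset f^{-1}(B(y_0,s))$. The set $\alpha([0,1])$ is connected and contains $x\in C$. A connected subset of a space that meets a component lies entirely in that component. Hence $\alpha([0,1])\subset C$.

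I see no real obstacle here. The only points needing care are the availability of full lifts, which Lemma~\ref{lem2.5} supplies, and the fact that strict inequality in the length bound gives membership in the open ball.
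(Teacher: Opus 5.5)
Your proposal is correct and follows the paper's proof essentially step for step. For the ball inclusion you lift the minimizing geodesic via Lemma~\ref{lem2.5}; for the second assertion you use that the lift is connected, contains $x\in C$, and lies in $f^{-1}(B(y_0,s))$.
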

\begin{proof}
For $y'\in B(f(x),t/L)$ let $\beta$ be the minimizing geodesic from $f(x)$ to $y'$; its lift from $x$ has length less than $t$ by Lemma~\ref{lem2.5}. The second assertion holds because the lift is connected, starts in $C$ and lies in $f^{-1}(B(y_0,s))$.
\end{proof}

\begin{definition}\label{def4.3}
For $y\in N$ and $0<s<r_N$, we call a connected component of $f^{-1}(B(y,s))$ a \emph{cluster over $B(y,s)$}. This is terminology for the usual components of inverse images of normal domains; see~\cite[Chapter~I]{R1993}\cite[Proposition~4.2 and Section~5.3]{GW2016b}. Properness of $f$ and the normal-domain theory imply that each cluster $C$ is a normal domain with $f(C)=B(y,s)$ and $f(\partial C)=\partial B(y,s)$; we write $d_C$ for its degree. The degrees of the clusters over $B(y,s)$ sum to $m$, so there are at most $m$ clusters; a cluster of degree one is mapped homeomorphically onto $B(y,s)$ and contains no branch point; and if $B(y',s')\subset B(y,s)$, each cluster over $B(y',s')$ lies in exactly one cluster over $B(y,s)$, each cluster over $B(y,s)$ contains at least one cluster over $B(y',s')$, and the degrees of the sub-clusters inside a cluster $C$ sum to $d_C$.
\end{definition}

\begin{lemma}\label{lem4.4}
Let $f\colon(M,D)\to(N,\rho_N)$ be $L$-BLD, let $C$ be a cluster over $B(y,s)$, $0<s<r_N$, and $F=C\cap f^{-1}(y)$. Then the graph on $F$ joining $x,x'$ when $D(x,x')\le2Ls$ is connected. Consequently $\diam_DF\le2Ls(\#F-1)\le2Ls(d_C-1)$, every point of $C$ lies within $D$-distance $Ls$ of $F$, and $\diam_DC\le2Ls\,d_C$.
\end{lemma}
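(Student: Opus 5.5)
The plan is to use path lifting inside the cluster together with the BLD length bound of Lemma~\ref{lem2.5}. I will prove the second consequence first, since the graph connectivity rests on it. Take any $z\in C$ and join $f(z)$ to $y$ by the minimizing radial geodesic $\beta$ in the strongly convex ball $B(y,s)$, so $\ell_{\rho_N}(\beta)<s$. By Lemma~\ref{lem4.2}, its lift from $z$ stays in $C$ and ends at a point of $F$. By Lemma~\ref{lem2.5} the lift has $D$-length at most $Ls$. Hence every point of $C$ lies within $D$-distance $Ls$ of $F$. Here $D$ is a length metric, as in Lemma~\ref{lem4.2}, and $M$ is compact, so Lemma~\ref{lem2.5} applies.

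For connectivity of the graph I will argue by contradiction. Suppose $F=F_1\sqcup F_2$ with both parts nonempty and $D(x,x')>2Ls$ for all $x\in F_1$, $x'\in F_2$. Set
\[
 A_i=\{z\in C:\text{some lift as above from }z\text{ ends in }F_i\}.
\]
By the previous paragraph $A_1\cup A_2=C$. If $z\in A_1\cap A_2$, then there are $x\in F_1$ and $x'\in F_2$ with $D(x,x')\le D(x,z)+D(z,x')\le 2Ls$, which is impossible. So the $A_i$ are disjoint.

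I still need each $A_i$ to be open, or a cleaner substitute. Openness of $A_i$ is the main obstacle, because lifts need not be unique at branch points. To avoid this, I will replace $A_i$ by $A_i'=\{z\in C:\dist_D(z,F_i)\le Ls\}$. These sets are closed in $C$, cover $C$, and are disjoint by the same triangle inequality, since a common point would give a pair at distance at most $2Ls$. So $C$ is a disjoint union of two nonempty relatively closed sets, contradicting connectedness of $C$. (The $A_i'$ are nonempty because $F_i\subset A_i'$.) No openness of the original $A_i$ is needed.

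The diameter bounds then follow from the connectivity. Any two points of $F$ are joined by a path in the graph with at most $\#F-1$ edges, each of length at most $2Ls$, giving $\diam_D F\le 2Ls(\#F-1)$. Since each point of $F$ has local index at least one, \eqref{eq2.2} restricted to the normal domain $C$ gives $\#F\le d_C$. Finally, for $z,z'\in C$, choose $x,x'\in F$ within $Ls$ of $z$ and $z'$ respectively. Then
\[
 D(z,z')\le 2Ls+2Ls(d_C-1)=2Ls\,d_C,
\]
which is the bound $\diam_DC\le2Ls\,d_C$.
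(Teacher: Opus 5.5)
Your proof is correct and follows the paper's argument: you lift the minimizing geodesic to $y$ to place every point of $C$ within $Ls$ of $F$, then contradict connectedness of $C$ using a partition of $F$ into two parts at mutual distance greater than $2Ls$. The only difference is cosmetic: you separate $C$ by the relatively closed sets $\{\dist_D(\cdot,F_i)\le Ls\}$, which are disjoint because $F$ is finite and the distances are attained, whereas the paper uses the open unions of balls of radius $Ls+\eta$ with a small margin $\eta>0$.
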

\begin{proof}
Each $z\in C$ is joined to a point of $F$ by the lift of the minimizing geodesic from $f(z)$ to $y$, which stays in $C$ and has length less than $Ls$ (Lemma~\ref{lem4.2}). If $F=F'\sqcup F''$ with $D(F',F'')>2Ls$, choose $\eta>0$ with $2(Ls+\eta)<D(F',F'')$; the open sets $\bigcup_{x\in F'}B(x,Ls+\eta)$ and $\bigcup_{x\in F''}B(x,Ls+\eta)$ are disjoint, both meet $C$, and their union contains $C$, contradicting connectedness. The remaining statements follow.
\end{proof}

\begin{theorem}\label{thm4.5}
For $0<r<r_N$ and $x\in M$,
\[
 \vol_N\bigl(B(f(x),r)\bigr)\le\HH^n_{d_f}\bigl(B_{d_f}(x,r)\bigr)\le C(n)\,m^{n+1}\,r^n .
\]
In particular $(M,d_f)$ is Ahlfors $n$-regular, with constants depending only on $n$, $m$ and $N$.
\end{theorem}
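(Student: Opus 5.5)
Both bounds will come from the fact that $f$ is $1$-BLD for $d_f$ (Proposition~\ref{prop4.1}(ii)), so the restriction of $f$ to any set on which it is injective preserves lengths. Combined with the local length structure, this should make such restrictions isometric onto their images at small scales. We will work cluster by cluster (Definition~\ref{def4.3}), with $D=d_f$ and $L=1$ in Lemmas~\ref{lem4.2} and~\ref{lem4.4}.

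\emph{Lower bound.} Lemma~\ref{lem4.2} with $L=1$ gives $B(f(x),r)\subset f(B_{d_f}(x,r))$. Since $f$ is $1$-Lipschitz, Hausdorff measure does not increase under $f$, so
\[
 \vol_N(B(f(x),r))\le\HH^n_{\rho_N}\bigl(f(B_{d_f}(x,r))\bigr)\le\HH^n_{d_f}(B_{d_f}(x,r)).
\]
Here we use the normalization $\HH^n_{\rho_N}=\vol_N$. This bound is immediate.

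\emph{Upper bound.} Set $y=f(x)$. Since $f$ is $1$-Lipschitz, $B_{d_f}(x,r)$ lies in the cluster $C$ over $B(y,r)$ that contains $x$; this cluster has degree $d_C\le m$ and $\diam C\le 2rm$ by Lemma~\ref{lem4.4}. It therefore suffices to bound $\HH^n_{d_f}(C)\le C(n)m^{n+1}r^n$.

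\begin{enumerate}
\item \emph{Remove the branch values.} First we show $\HH^n_{d_f}(C\cap f^{-1}(f(B_f)))$ is controlled. A clean route: $f(B_f)$ is a closed set with empty interior, and $f^{-1}$ of it is a nowhere dense set of topological dimension $\le n-2$ by \v Cernavski\u\i--V\"ais\"al\"a. That does not directly give measure zero, so we avoid this route and instead use a covering argument that treats all points uniformly.
\item \emph{Covering argument.} Cover $B(y,r)$ by $K\le C(n)(r/\varepsilon)^n$ target balls $B(y_i,\varepsilon)$, $\varepsilon$ small. Each cluster over $B(y_i,\varepsilon)$ has $d_f$-diameter at most $2\varepsilon m$ by Lemma~\ref{lem4.4}, and there are at most $m$ such clusters for each $i$. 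This yields a cover of $C$ by at most $mK$ sets of diameter $\le 2m\varepsilon$, hence
\[
 \HH^n_{\delta}(C)\le mK(2m\varepsilon)^n\le C(n)m^{n+1}r^n .
\]
Letting $\varepsilon\to0$ gives $\HH^n_{d_f}(C)\le C(n)m^{n+1}r^n$, which is exactly the stated power $m^{n+1}$.
\end{enumerate}
This covering argument makes step~1 unnecessary. The only care needed is that the target covering number is uniform for $r<r_N$, which follows from the volume bounds $c_Ns^n\le\vol_N\le C_Ns^n$ via a maximal separated net, and that the clusters over the small balls $B(y_i,\varepsilon)$ cover $f^{-1}(B(y,r))$.

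\emph{Ahlfors regularity.} Combining the two bounds gives $c_Nr^n\le\HH^n_{d_f}(B_{d_f}(x,r))\le C(n)m^{n+1}r^n$ for $r<r_N$. For $r_N\le r\le\diam_{d_f}M$ we need a bound on $\diam_{d_f}M$. Cover $N$ by $K_0(N)$ balls of radius $r_N/2$; each has at most $m$ clusters of diameter $\le mr_N$. Connectedness of $M$ chains these clusters, giving $\diam M\le C(N)m^2$. The same covering bound controls $\HH^n(M)$, so for large $r$ the constants depend only on $n$, $m$ and $N$. The main point to check is the uniform covering count. No single step is a serious obstacle; the subtlety is to use the degree-counted covering rather than trying to prove measure zero of the branch set.
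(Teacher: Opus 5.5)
Your proof is correct and is essentially the paper's. The lower bound comes from the $1$-Lipschitz property together with Lemma~\ref{lem4.2}, and the upper bound from covering the target ball by $\lesssim(r/\varepsilon)^n$ small balls whose at most $m$ clusters each have $d_f$-diameter at most $2m\varepsilon$ by Lemma~\ref{lem4.4}. Your only real deviation is the large-scale diameter bound: you chain the at most $mK_0$ clusters over a fixed cover of $N$, giving $\diam_{d_f}M\le K_0m^2r_N$, where the paper lifts geodesics to a fixed fiber to get $2m\diam N$. Either suffices for the constants, and the abandoned step~1 and the opening remarks about isometric restrictions can simply be deleted.
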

\begin{proof}
By Proposition~\ref{prop4.1}(ii), $f$ is $1$-Lipschitz from $(M,d_f)$, so it does not increase $\HH^n$, and by Lemma~\ref{lem4.2} (with $L=1$) $f(B_{d_f}(x,r))\supset B(f(x),r)$; this gives the lower bound. For the upper bound, cover $B(f(x),r)$ by at most $C(n)(r/t)^n$ balls $B(a_i,t)$, $0<t\le r$. The preimage of each is the union of at most $m$ clusters, each of $d_f$-diameter at most $2mt$ by Lemma~\ref{lem4.4}. Hence $f^{-1}(B(f(x),r))\supset B_{d_f}(x,r)$ is covered by at most $C(n)m(r/t)^n$ sets of diameter at most $2mt$, so the Hausdorff pre-measure at level $2mt$ of $B_{d_f}(x,r)$ is at most $C(n)m(r/t)^n(2mt)^n=2^nC(n)m^{n+1}r^n$. Let $t\to0$. For completeness, the large-scale constants can also be chosen independently of $f$. Put $D_N=\diam N$. Every point of $M$ can be joined to the fiber over a fixed $y\in N$ by a lifted target geodesic of length at most $D_N$. The same connectedness argument as in Lemma~\ref{lem4.4}, using open neighborhoods of radius $D_N+\varepsilon$ and then letting $\varepsilon\downarrow0$, gives $\diam_{d_f}M\le2mD_N$. A cover of $N$ by a fixed finite number of balls of radius $r_N/2$ and the preceding covering argument bound $\HH^n_{d_f}(M)$ in terms of $n,m,N$. For $r\ge r_N$, the lower bound at $r_N/2$, together with $r\le2mD_N$, and this total-measure bound give the two Ahlfors estimates.
\end{proof}

\begin{proof}[Proof of Theorem~\textup{\ref{thm1.4}}]
(i) is Proposition~\ref{prop4.1}(i),(ii) and Theorem~\ref{thm4.5}; (ii) is Proposition~\ref{prop4.1}(iii). For (iii): if $(M,d_f)$ is linearly locally contractible, $d_f$ itself is admissible in both senses, being geodesic (hence $1$-quasiconvex), Ahlfors $n$-regular, and $1$-BLD. Conversely, given an admissible metric $d$, quasiconvex by hypothesis or by Corollary~\ref{cor2.3}, Proposition~\ref{prop4.1}(iii) shows that $d$ is bi-Lipschitz to $d_f$, and linear local contractibility is bi-Lipschitz invariant: $B_{d_f}(x,r)\subset B_d(x,Lr)$ contracts in $B_d(x,\lambda Lr)\subset B_{d_f}(x,\lambda L^2Cr)$.
\end{proof}

\begin{remark}\label{rem4.6}
Theorem~\ref{thm4.5} also shows that the branch set may carry positive $n$-dimensional measure in the pullback metric: since $f$ is $1$-Lipschitz, $\HH^n_{d_f}(B_f)\ge\HH^n_{\rho_N}(f(B_f))$, which is positive for the examples of Theorem~\ref{thm1.7}(i). Ahlfors regularity is nevertheless automatic.
\end{remark}

\section{Clusters, skeletal extension, and the cluster obstruction}\label{sec5}

From now on $(M,D)$ is a closed connected oriented topological $n$-manifold with a compatible length metric and contractibility data $(\lambda,r_*)$, $(N,\rho_N)$ is a closed connected oriented Riemannian $n$-manifold, and $f\colon(M,D)\to(N,\rho_N)$ is an orientation-preserving $L$-BLD branched cover of degree $m\ge1$.

\subsection{Skeletal extension}
The following is the finite-complex version of the controlled extension principle in~\cite[Lemma~4.5]{GW2017}. We record a proof to specify the constants.
\begin{lemma}\label{lem5.1}
Define $C_0=1$ and $C_k=\lambda(C_{k-1}+2)+1$ for $k\ge1$. Let $a>0$ satisfy $(C_n+2)a<r_*$. Let $K$ be a finite simplicial complex of dimension at most $n$ with points $x_v\in M$ assigned to its vertices, and suppose that $D(x_v,x_w)\le a$ whenever $v,w$ span a simplex. Then the assignment extends to a continuous $g\colon|K|\to M$ with $g(\sigma)\subset B_D(x_v,C_ka)$ for every $k$-simplex $\sigma$ and vertex $v$ of $\sigma$.
\end{lemma}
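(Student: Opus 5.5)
We construct $g$ by induction on the skeleta $K^{(k)}$, $k=0,1,\dots,n$. On $K^{(0)}$ we set $g(v)=x_v$; the estimate holds with $C_0=1$, since $D(x_v,x_v)=0<a$. The inductive hypothesis at level $k-1$ is that $g$ is already continuous on $|K^{(k-1)}|$ and that for every $j$-simplex $\tau$ with $j\le k-1$ and every vertex $w$ of $\tau$ we have $g(\tau)\subset B_D(x_w,C_ja)$. Since $C_j$ is increasing in $j$, the single bound $C_{k-1}$ can be used for all faces of lower dimension.

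Now let $\sigma$ be a $k$-simplex with $k\ge1$, and fix a vertex $v$ of $\sigma$. Every point of $g(\partial\sigma)$ lies in $g(\tau)$ for some proper face $\tau$. That face has a vertex $w$, and $D(x_w,x_v)\le a$ because $w$ and $v$ span a face of $\sigma$. Hence
\[
 g(\partial\sigma)\subset B_D\bigl(x_v,(C_{k-1}+1)a\bigr)\subset B_D\bigl(x_v,(C_{k-1}+2)a\bigr)=:B.
\]
The slack $+2$ gives a strictly open ball and allows room in the next step. Set $r=(C_{k-1}+2)a\le(C_n+2)a<r_*$. Definition~\ref{def2.1} then says that the inclusion $B\hookrightarrow B_D(x_v,\lambda r)$ is null-homotopic. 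Composing with $g|_{\partial\sigma}\colon\partial\sigma\cong\mathbb S^{k-1}\to B$ gives a null-homotopic map into $B_D(x_v,\lambda r)$. As in the remark after Definition~\ref{def2.1}, a null-homotopy $H\colon\partial\sigma\times[0,1]\to B_D(x_v,\lambda r)$ ending at a constant induces an extension of $g|_{\partial\sigma}$ over the cone $\sigma\cong\partial\sigma\times[0,1]/\partial\sigma\times\{1\}$. This extension has image in $B_D(x_v,\lambda(C_{k-1}+2)a)$.

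It remains to check the bound at every vertex of $\sigma$, not only at the chosen $v$. For another vertex $w$ we have $D(x_v,x_w)\le a$, so
\[
 g(\sigma)\subset B_D\bigl(x_w,(\lambda(C_{k-1}+2)+1)a\bigr)=B_D(x_w,C_ka).
\]
This explains the extra $+1$ in the recursion. The extensions over distinct $k$-simplices agree on shared faces, since those faces lie in the already defined $(k-1)$-skeleton. Therefore $g$ is continuous on $|K^{(k)}|$, because $|K|$ carries the weak topology of a finite complex. The induction stops at $k=\dim K\le n$. The hypothesis $(C_n+2)a<r_*$ covers every radius $r$ used along the way, since $C_{k-1}\le C_n$ for all $k\le n$.

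There is no real obstacle; the argument is bookkeeping. The only point needing care is to choose a single vertex $v$ for the contraction and then transfer the estimate to the other vertices at the cost of $a$. The lower-dimensional faces are handled by monotonicity of $C_k$, so the inductive claim only ever needs the bound $C_{k-1}$. No triangulation of $M$ is used; we only extend maps from spheres via the null-homotopy.
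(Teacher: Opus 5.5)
Your proof is correct and follows essentially the same route as the paper: induction on skeleta, contracting $g(\partial\sigma)$ inside the ball of radius $(C_{k-1}+2)a<r_*$ about one chosen vertex, and paying an extra $a$ to transfer the bound to the other vertices of $\sigma$. Your appeal to monotonicity of $C_k$ is valid but not needed, since every point of $\partial\sigma$ already lies in a $(k-1)$-face of $\sigma$.
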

\begin{proof}
Suppose $g$ is defined on the $(k-1)$-skeleton with the stated control, and let $\sigma$ be a $k$-simplex with a vertex $v_0$. For $z\in\partial\sigma$ in a face with vertex $w$, $D(g(z),x_{v_0})\le D(g(z),x_w)+D(x_w,x_{v_0})<(C_{k-1}+1)a$, so $g(\partial\sigma)\subset B(x_{v_0},(C_{k-1}+2)a)$, a ball of radius less than $r_*$ that contracts in $B(x_{v_0},\lambda(C_{k-1}+2)a)$. Composing the contraction with $g|_{\partial\sigma}$ gives an extension over the cone $\sigma$ with image in that ball, hence within $\lambda(C_{k-1}+2)a+a=C_ka$ of every vertex of $\sigma$. The extensions agree on common faces; induct.
\end{proof}
Only null-homotopies of spheres $\mathbb S^{k-1}$, $k\le n$, in balls of $\lambda$ times the radius are used, so linear local $(n-1)$-connectedness would suffice.

\subsection{Target coordinates}
\begin{lemma}\label{lem5.2}
There is $r_N>0$ (smaller than the one fixed above if necessary) such that for every $y_0\in N$ an oriented orthonormal identification $T_{y_0}N\cong\R^n$ gives a map $p_{y_0}=\exp_{y_0}$ which, for $0<r<r_N$, is an orientation-preserving embedding of the cube $Q_r=[-r,r]^n$ with
\begin{equation}\label{eq5.1}
 \tfrac12|z-w|\le\rho_N(p_{y_0}(z),p_{y_0}(w))\le2|z-w|,\qquad \rho_N(p_{y_0}(z),y_0)=|z|,
\end{equation}
so that $B(y_0,r)\subset p_{y_0}(Q_r)\subset\overline B(y_0,\sqrt n\,r)$, and points at distance less than $r_N$ are joined by a unique minimizing geodesic depending continuously on its endpoints.
\end{lemma}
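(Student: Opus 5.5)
The plan is to obtain the statement from standard Riemannian normal-coordinate estimates, made uniform over $N$ by compactness. Since $N$ is closed, its injectivity radius is bounded below by some $i_N>0$. The exponential map $\exp\colon TN\to N$ is smooth, and on the compact set $\{(y,v):|v|\le i_N/2\}$ its differential in $v$ depends continuously on $(y,v)$ and equals the identity at $v=0$. Fix an oriented orthonormal identification $T_{y_0}N\cong\R^n$; this is possible because $N$ is oriented. Then $p_{y_0}=\exp_{y_0}$ is a diffeomorphism of the Euclidean ball of radius $i_N$ onto $B(y_0,i_N)$, and it is orientation-preserving because its differential at $0$ is the chosen oriented isometry. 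The radial identity $\rho_N(p_{y_0}(z),y_0)=|z|$ for $|z|<i_N$ is the defining property of normal coordinates below the injectivity radius, by the Gauss lemma.

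For the two-sided bound in \eqref{eq5.1}, I would use uniform continuity of $(y,v)\mapsto d(\exp_y)_v$ and of the metric coefficients in normal coordinates. These give $s_0>0$, independent of $y_0$, such that on $\{|v|<s_0\}$ the pulled-back metric $g$ satisfies $\tfrac14|\xi|^2\le g(\xi,\xi)\le4|\xi|^2$. The upper bound $\rho_N(p(z),p(w))\le2|z-w|$ then follows by integrating along the straight segment from $z$ to $w$, which stays in the convex Euclidean ball. The lower bound is the main point, because a minimizing geodesic between $p(z)$ and $p(w)$ might a priori leave the coordinate ball. To handle this I would first shrink so that $\sqrt n\,r_N<s_0/8$. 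Then $z,w\in Q_{r_N}$ have $|z|,|w|<s_0/8$, so $\rho_N(p(z),p(w))<s_0/4$ by the upper bound. Any path from $p(z)$ to $p(w)$ that leaves $p(B(0,s_0))$ has length at least $2(s_0-s_0/8)$ by the radial identity, so near-minimizing paths stay inside the chart. Inside the chart, lengths are at least $\tfrac12$ times Euclidean lengths, which gives $\tfrac12|z-w|\le\rho_N$.

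The inclusions $B(y_0,r)\subset p_{y_0}(Q_r)\subset\overline B(y_0,\sqrt n\,r)$ follow immediately from the radial identity and $B_{\mathrm{Euc}}(0,r)\subset Q_r\subset\overline B_{\mathrm{Euc}}(0,\sqrt n\,r)$. A point of $B(y_0,r)$ equals $p(z)$ for a unique $|z|<r$, using $r<i_N$. Injectivity on $Q_r$ holds because $\sqrt n\,r<i_N$, which gives an embedding of the compact cube.

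For the last clause, I would shrink $r_N$ below the convexity radius and below half the injectivity radius; both are positive on a closed manifold, and the earlier choice of $r_N$ in Section~\ref{sec4} already does this. If $\rho_N(y,y')<r_N$, then $y'=\exp_y(v)$ for a unique $v$ with $|v|<r_N$, and the minimizing geodesic $t\mapsto\exp_y(tv)$ is unique. The map $(y,y')\mapsto v=\exp_y^{-1}(y')$ is continuous, indeed smooth, near the diagonal by the inverse function theorem applied to $(y,v)\mapsto(y,\exp_yv)$. The minimizing geodesic therefore depends continuously on its endpoints, uniformly in the parameter. The only real obstacle in the whole proof is the uniformity of constants in $y_0$, and compactness of $N$ together with smoothness of $\exp$ on $TN$ settles it.
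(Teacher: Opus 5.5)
Your proof is correct and follows essentially the paper's route: the paper's proof only cites positivity of the injectivity radius, the Gauss lemma and compactness, and you supply those details, including the uniform choice of $s_0$ and the argument that near-minimizing paths stay in the chart. One slip is cosmetic: $\rho_N(p(z),p(w))<s_0/4$ comes from the radial identity and the triangle inequality through $y_0$, since the upper bound in \eqref{eq5.1} only gives $<s_0/2$, and either estimate suffices.
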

\begin{proof}
Positivity of the injectivity radius, the Gauss lemma and compactness~\cite[Chapters~5--6]{Lee2018}.
\end{proof}

We fix the constants
\begin{equation}\label{eq5.2}
 \begin{aligned}
 A_0&=LC_n(2L+1)+2,\qquad\varepsilon_0=\frac1{8A_0},\\
 s(r)&=(2\sqrt n+L)r,\\
 r_2&=\min\Bigl\{1,\frac{r_N}{2\sqrt n+L},\frac{r_*}{(C_n+2)(2L+1)}\Bigr\}.
 \end{aligned}
\end{equation}
Note that $\varepsilon_0<1/(16L^2)$.

\subsection{A homological lemma}
\begin{lemma}\label{lem5.3}
Let $C\subset M$ be open and connected and $K\subset C$ a nonempty compact connected set. Then $H_n(C,C\setminus K)\cong\Z$, generated by the image $o_K$ of $[M]$ under $H_n(M)\to H_n(M,M\setminus K)\cong H_n(C,C\setminus K)$, and for every finite $F\subset K$ the image of $o_K$ in $H_n(C,C\setminus F)\cong\bigoplus_{x\in F}\Z o_x$ is $\sum_{x\in F}o_x$.
\end{lemma}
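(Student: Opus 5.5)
We combine excision with the duality theorem for compact subsets of manifolds, Hatcher's Lemma~3.27. First, $K\subset C$ is compact and $C$ is open, so $M\setminus C$ is closed. Since $M\setminus C$ is contained in the open set $M\setminus K$, excision gives $H_n(C,C\setminus K)\cong H_n(M,M\setminus K)$. We may therefore work in $M$.

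Lemma~3.27 of~\cite{Ha2002} applies to a closed (more generally, any) $n$-manifold and a compact set $K$. It says that $H_i(M,M\setminus K)=0$ for $i>n$. It also says that a class in $H_n(M,M\setminus K)$ is zero if and only if its image in $H_n(M,M\setminus\{x\})$ is zero for every $x\in K$. Hence the map
\[
 H_n(M,M\setminus K)\to\prod_{x\in K}H_n(M,M\setminus\{x\})
\]
is injective. Its image consists of sections $x\mapsto\alpha_x$ that are locally constant in the orientation-cover sense, i.e.\ sections of the covering $M_{\Z}\to M$ over $K$. For this identification we use the direct limit description in the proof of Lemma~3.27: each such section comes from a class over a small ball neighbourhood, and finite unions are handled by Mayer--Vietoris.

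Now use connectedness of $K$ and orientability of $M$. The oriented local coefficient system is trivial, $M_{\Z}\cong M\times\Z$, with the section $x\mapsto o_x$ as the unit. A continuous section over a connected $K$ is therefore $k\,o_x$ for a single integer $k$. The image of $[M]$ is the section $x\mapsto o_x$, by the fact recalled in Section~\ref{sec2} that $[M]\mapsto o_x$. Injectivity then shows that $H_n(M,M\setminus K)$ is infinite cyclic. To see that $o_K$ generates, write an arbitrary class $\beta$ as corresponding to $k\,o_x$; then $\beta-k\,o_K$ restricts to zero at every point, so $\beta=k\,o_K$.

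For the finite set $F\subset K$, first excise as before to get $H_n(C,C\setminus F)\cong H_n(M,M\setminus F)$. Then choose disjoint small balls $B_x$ around the points of $F$. Excision and additivity give $H_n(M,M\setminus F)\cong\bigoplus_x H_n(B_x,B_x\setminus\{x\})\cong\bigoplus_x\Z o_x$. The image of $o_K$ has component at $x$ equal to the further restriction to $H_n(M,M\setminus\{x\})$. By naturality of the restriction maps $M\setminus K\subset M\setminus F\subset M\setminus\{x\}$, that component is the image of $[M]$, namely $o_x$. So the image of $o_K$ is $\sum_{x\in F}o_x$.

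The main obstacle is the identification of $H_n(M,M\setminus K)$ with continuous sections over $K$, as opposed to mere injectivity into the product. I would cite Lemma~3.27 in its full form, $H_n(M,M\setminus K)\cong\Gamma(K)$ for compact $K$, which Hatcher proves for any $n$-manifold (topological manifolds suffice). With that citation, the remaining steps reduce to connectedness of $K$ and naturality of the restriction maps.
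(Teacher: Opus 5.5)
Your proof is correct and follows essentially the same route as the paper: excision to $M$, injectivity of localization from the uniqueness part of Hatcher's Lemma~3.27(a), constancy of the local coefficient on the connected set $K$, and naturality for finite $F\subset K$. The step you call the main obstacle is lighter than you suggest. You only need that each class gives a locally constant coefficient, which follows by representing it by a relative cycle $c$ and localizing through a coordinate ball around $x$ that misses the support of $\partial c$; this is exactly what the paper proves directly, and it does not require the full identification $H_n(M,M\setminus K)\cong\Gamma(K)$, which Hatcher's Lemma~3.27(a) does not state in that form since its sections are defined on all of $M$.
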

\begin{proof}
Excision applies because $K$ is compact and contained in the open set $C$: the closed set $M\setminus C$ lies in the interior of $M\setminus K$. Hence it is enough to compute $H_n(M,M\setminus K)$.

For a class $\alpha$ in this group write its localization at $x\in K$ as $a(x)o_x$, with $a(x)\in\Z$. The integer-valued function $a$ is locally constant on $K$. To see this directly, represent $\alpha$ by a finite relative cycle $c$, so the compact support of $\partial c$ is disjoint from $K$. Around any $x\in K$ choose a closed coordinate ball $D$ whose interior contains $x$ and which misses that support. The same chain defines a class in $H_n(M,M\setminus D)\cong\Z$, and its localization has the same coefficient at every point of $\operatorname{int}D$. This proves local constancy. Since $K$ is connected, $a(x)$ is one integer $q$ throughout $K$; no local connectedness or path connectedness of $K$ is needed.

Let $o_K$ be the image of the global orientation class $[M]$. By~\cite[Theorem~3.26]{Ha2002}, its localization is $o_x$ at each $x\in K$. Consequently $\alpha-qo_K$ localizes to zero everywhere on $K$. The uniqueness assertion for compact sets in~\cite[Lemma~3.27(a)]{Ha2002} implies $\alpha-qo_K=0$. Also $o_K$ has infinite order, since its localization at any point is a generator of $\Z$. Thus $H_n(C,C\setminus K)=\Z o_K$.

Finally, disjoint coordinate neighborhoods of the finitely many points of $F$, followed by excision, identify $H_n(C,C\setminus F)$ with $\bigoplus_{x\in F}\Z o_x$. Naturality of localization sends $o_K$ to the element with coefficient $1$ at every point, namely $\sum_{x\in F}o_x$.
\end{proof}

\subsection{The cluster obstruction}
\begin{proposition}\label{prop5.4}
Let $y_0\in N$, $0<r<r_2$, $s=s(r)$, and let $C$ be a cluster over $B(y_0,s)$ with $d_C\ge2$. Then there is $y\in p_{y_0}(Q_r)$ with
\begin{equation}\label{eq5.3}
 \diam_D\bigl(C\cap f^{-1}(y)\bigr)>\varepsilon_0r.
\end{equation}
\end{proposition}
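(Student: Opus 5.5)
We argue by contradiction: assume every fiber $C\cap f^{-1}(y)$, $y\in p_{y_0}(Q_r)$, has $D$-diameter at most $\varepsilon_0r$. We build an approximate right inverse $g\colon p_{y_0}(Q_r)\to C$ of $f$ by skeletal extension (Lemma~\ref{lem5.1}). Then we compare the relative degree of $f\circ g$, which must be $1$, with a divisibility by $d_C$ coming from Lemma~\ref{lem5.3}.

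\emph{Construction of $g$.} Triangulate $Q_r$ by a fine subdivision $K$ with mesh $\eta\ll r$. Assign to each vertex $v$ an arbitrary point $x_v\in C\cap f^{-1}(p_{y_0}(v))$; this fiber is nonempty because $f(C)=B(y_0,s)\supset p_{y_0}(Q_r)$, since $s=(2\sqrt n+L)r>\sqrt n\,r$. For adjacent vertices $v,w$, lift the short geodesic from $p_{y_0}(v)$ to $p_{y_0}(w)$ starting at $x_v$. By Lemmas~\ref{lem2.5} and~\ref{lem4.2} the lift stays in $C$ and ends in the fiber over $p_{y_0}(w)$ at distance at most $2L\eta$ from $x_v$. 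Since that fiber has diameter at most $\varepsilon_0r$, we get $D(x_v,x_w)\le a:=2L\eta+\varepsilon_0r$. The choice of $r_2$ in~\eqref{eq5.2} gives $(C_n+2)a<r_*$, so Lemma~\ref{lem5.1} produces a continuous $g\colon|K|\to M$. Each simplex then has image of diameter at most $2C_na$, and $f$ is $L$-Lipschitz on the length space $(M,D)$. Hence $\rho_N(f(g(z)),p_{y_0}(z))\le 2LC_na+2\eta$, which is at most roughly $A_0\varepsilon_0 r\le r/8$ once $\eta$ is tiny. So $f\circ g$ is uniformly close to $p_{y_0}$. The images stay inside $f^{-1}(B(y_0,s))$ within distance $LC_na$ of points of $C$; the margin $Lr$ built into $s$ keeps them inside the cluster $C$ itself (argue via lifts of short geodesics, or the fact that clusters over nested balls are separated).

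\emph{Degree count.} On $\partial Q_r$ the map $f\circ g$ misses $y_0$, since it is within $r/8$ of $p_{y_0}(\partial Q_r)$, which lies at distance at least $r/2$ from $y_0$. It is homotopic to $p_{y_0}$ through maps avoiding $y_0$, using the straight-line homotopy in the chart. Hence $(f\circ g)_*\colon H_n(Q_r,\partial Q_r)\to H_n(N,N\setminus\{y_0\})$ sends the fundamental class to $o_{y_0}$, with coefficient $1$. Now factor this map through $g_*\colon H_n(Q_r,\partial Q_r)\to H_n(C,C\setminus K)$, where $K=C\cap f^{-1}(\overline B(y_0,r/4))$. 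Then $g(\partial Q_r)\subset C\setminus K$, since $f\circ g$ on $\partial Q_r$ lies at distance more than $r/4$ from $y_0$. To apply Lemma~\ref{lem5.3} we need $K$ compact and connected, but it may be disconnected. Instead, take $K$ to be a component of $C\cap f^{-1}(\overline B(y_0,r/4))$ that contains the relevant image, or enlarge it to a compact connected set by adding lifted arcs inside $C$. Lemma~\ref{lem5.3} gives $g_*[Q_r]=q\,o_K$ for some $q\in\Z$. Next apply $f_*$ and localize at $y_0$ using the finite fiber $F=K\cap f^{-1}(y_0)$. Each point of $F$ contributes its local index, so $f_*(o_K)=\bigl(\sum_{x\in F}i(x,f)\bigr)o_{y_0}$. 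If $K$ contains the entire fiber $C\cap f^{-1}(y_0)$, this sum is $d_C$. Then $1=q\,d_C$, which contradicts $d_C\ge2$.

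\emph{Main obstacle.} The delicate step is ensuring that the compact connected set $K$ contains the \emph{whole} fiber $C\cap f^{-1}(y_0)$, so that the coefficient is $d_C$ rather than a partial sum. First I would try taking $K$ to be the closure of a cluster over a slightly smaller ball $B(y_0,r/4)$ inside $C$. The degree of the sub-cluster $C'$ hit by $g$ then plays the role of $d_C$, but it could equal $1$. To handle this, I would use the small-fiber hypothesis together with Lemma~\ref{lem4.4}. All points over $y_0$ in $C$ lie within $\varepsilon_0r$ of each other. Lifting radial geodesics shows that the sub-clusters over $B(y_0,r/4)$ containing them are joined within $C$ by arcs whose image stays in $B(y_0,r/2)$. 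So take $K$ to be the union of these closed sub-clusters and the connecting lifted arcs, which is compact, connected, and disjoint from $g(\partial Q_r)$ given the constants $A_0,\varepsilon_0$. Verifying this last disjointness with the stated constants is where I expect the computation to be tight.
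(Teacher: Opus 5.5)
Your proposal is correct and follows the paper's proof: skeletal extension produces an approximate right inverse $g$, and Lemma~\ref{lem5.3} applied to a compact connected set containing the whole fiber $C\cap f^{-1}(y_0)$ forces $1=q\,d_C$. The paper separates out the connectivity issue as Case~1: two fiber points over $y_0$ in different components of $K=C\cap f^{-1}(\overline B(y_0,r/4))$ are at $D$-distance $\ge r/(2L)>\varepsilon_0r$, which is exactly the contrapositive of your connecting-arc fix; indeed your arcs, taken as paths of $D$-length $<2\varepsilon_0r$ (using that $D$ is a length metric, not lifts of radial geodesics) with images of length $<r/8$, already lie in $K$, and the disjointness from $g(\partial Q_r)$ is not tight, since $\rho_N(p_{y_0}(z),y_0)=|z|\ge r$ on $\partial Q_r$.
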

\begin{proof}
Write $p=p_{y_0}$ and let
\[
 K=C\cap f^{-1}(\overline B(y_0,r/4))
   =\overline C\cap f^{-1}(\overline B(y_0,r/4)).
\]
The equality follows from $f(\partial C)=\partial B(y_0,s)$ and $r/4<s$. In particular, $K\subset C$ is compact, being the intersection of two closed subsets of the compact manifold $M$. The components $W_1,\dots,W_k$ of $C\cap f^{-1}(B(y_0,r/4))$ are normal domains mapped onto $B(y_0,r/4)$, $k\le d_C$, and by openness of $f$ every point of $K$ lies in the closure of their union; thus $K=\overline{W_1}\cup\dots\cup\overline{W_k}$ has finitely many components, each containing a point of $C\cap f^{-1}(y_0)$.

\emph{Case 1: $K$ is disconnected.} Choose $x,x'\in C\cap f^{-1}(y_0)$ in different components of $K$ and let $\gamma$ be a path from $x$ to $x'$. If $\gamma$ leaves $C$ it meets $\partial C$, so $f\circ\gamma$ reaches distance $s>r/2$ from $y_0$; if $\gamma$ stays in $C$ it is not contained in $K$, so $f\circ\gamma$ leaves $\overline B(y_0,r/4)$ and returns to $y_0$. In both cases $\ell_{\rho_N}(f\circ\gamma)\ge r/2$, hence $\ell_D(\gamma)\ge r/(2L)$ by~\eqref{eq2.3}, and $D(x,x')\ge r/(2L)>\varepsilon_0r$. Take $y=y_0$.

\emph{Case 2: $K$ is connected.} Suppose that $\diam_D(C\cap f^{-1}(y))\le\varepsilon_0r$ for all $y\in p(Q_r)$. Triangulate $Q_r$ with mesh at most $\varepsilon_0r$ and choose $x_v\in C\cap f^{-1}(p(v))$ for each vertex $v$; this is possible since $f(C)=B(y_0,s)\supset p(Q_r)$. If $v,w$ span a simplex, the lift from $x_v$ of the minimizing geodesic from $p(v)$ to $p(w)$ ends at some $x'_w\in C\cap f^{-1}(p(w))$ (Lemma~\ref{lem4.2}) at $D$-distance at most $L\rho_N(p(v),p(w))\le2L\varepsilon_0r$ from $x_v$, so $D(x_v,x_w)\le(2L+1)\varepsilon_0r=:a$. By~\eqref{eq5.2}, $(C_n+2)a<r_*$, and Lemma~\ref{lem5.1} gives $g\colon Q_r\to M$ with $g(\sigma)\subset B_D(x_v,C_na)$ for every simplex $\sigma$ and vertex $v$ of $\sigma$. Any point within $D$-distance $C_na$ of $x_v$ is joined to $x_v$ by a path whose image has length less than $LC_n(2L+1)\varepsilon_0r<r/8$ and therefore stays in $B(y_0,\sqrt n\,r+r/8)\subset B(y_0,s)$; hence $g(Q_r)\subset C$. As in the estimate just made,
\begin{equation}\label{eq5.4}
 \rho_N\bigl(f(g(z)),p(z)\bigr)\le L\,D(g(z),x_v)+2\varepsilon_0r\le A_0\varepsilon_0r=r/8\qquad(z\in Q_r).
\end{equation}

Let $V=B(y_0,s)$ and $\overline B=\overline B(y_0,r/4)$. The minimizing geodesics from $p(z)$ to $f(g(z))$ give a homotopy from $p$ to $f\circ g$ whose points lie in $V$ and, for $z\in\partial Q_r$, at distance at least $r-r/8>r/4$ from $y_0$. Hence $p$ and $f\circ g$ are homotopic as maps of pairs $(Q_r,\partial Q_r)\to(V,V\setminus\overline B)$, and $p_*[Q_r,\partial Q_r]=o_{\overline B}$, the generator of $H_n(V,V\setminus\overline B)\cong\Z$ (Lemma~\ref{lem5.3} in $N$), because $p$ is an orientation-preserving embedding of $Q_r\supset p^{-1}(\overline B)$ and both classes localize to $o_{y_0}$. On the source side, $g(\partial Q_r)\cap K=\emptyset$ by~\eqref{eq5.4}, and $f(C\setminus K)\subset V\setminus\overline B$ by the definition of $K$, so
\[
 (Q_r,\partial Q_r)\xrightarrow{\ g\ }(C,C\setminus K)\xrightarrow{\ f\ }(V,V\setminus\overline B)
\]
are maps of pairs. Write $g_*[Q_r,\partial Q_r]=q\,o_K$. Fix $z\in B(y_0,r/4)$ and $F=C\cap f^{-1}(z)\subset K$. Naturality of the inclusions $(C,C\setminus K)\to(C,C\setminus F)$ and $(V,V\setminus\overline B)\to(V,V\setminus\{z\})$ together with Lemma~\ref{lem5.3} gives
\[
 f_*o_K=\Bigl(\sum_{x\in F}i(x,f)\Bigr)o_{\overline B}=d_C\,o_{\overline B}.
\]
Therefore $(f\circ g)_*[Q_r,\partial Q_r]=d_Cq\,o_{\overline B}=o_{\overline B}$, i.e.\ $d_Cq=1$, impossible for $d_C\ge2$.
\end{proof}

\begin{remark}\label{rem5.5}
When $C\cap f^{-1}(y_0)$ is a single point $x_0$, one can instead work with the pairs punctured at $x_0$ and $y_0$, and the argument reduces to the classical computation $1=(\deg)\cdot q$ at a totally ramified value. When the fiber over $y_0$ has several points with local indices $d_1,\dots,d_k$, the pointwise argument only gives $1=\sum d_jq_j$, which is no obstruction if $\gcd(d_j)=1$; replacing the fiber by the compact connected set $K$ replaces the greatest common divisor by the total degree $d_C$, which is what makes Theorem~\ref{thm1.5} possible in every degree.
\end{remark}

\section{Uniform porosity and the Newman property}\label{sec6}

\begin{theorem}\label{thm6.1}
In the setting of Section~\textup{\ref{sec5}} with $m\ge2$, put
\[
 \theta=\frac{\varepsilon_0}{8L(m+1)(2\sqrt n+L)},\qquad\delta=\tfrac12\theta^{m-1},\qquad R_2=(2\sqrt n+L)r_2 .
\]
Then for every $y_0\in f(B_f)$ and $0<R<R_2$ there is a ball $B(y,\delta R)\subset B(y_0,R)\setminus f(B_f)$.
\end{theorem}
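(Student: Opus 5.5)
The plan is a finite recursion on target balls. At each stage the total number of clusters strictly increases, and the stage radius shrinks by the factor $\theta$. Since there are at most $m$ clusters over any ball (Definition~\ref{def4.3}), the recursion stops after at most $m-1$ steps. It stops at a ball all of whose clusters have degree one, and such a ball misses $f(B_f)$.

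\emph{Setup and stopping criterion.} Fix $y_0\in f(B_f)$ and $0<R<R_2$. Put $B_0=B(y_0,R)$ and let $n_0\ge1$ be the number of clusters over $B_0$. Inductively we will have balls $B_k=B(y_k,R_k)\subset B(y_0,R)$ with $R_k\ge\theta^kR$ and with at least $n_0+k$ clusters over $B_k$.

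Suppose all clusters over $B_k$ have degree one. Each of them is then mapped homeomorphically onto $B_k$ and contains no branch point (Definition~\ref{def4.3}). Since $f^{-1}(B_k)$ is the union of these clusters, $B_k\cap f(B_f)=\emptyset$. Then $B(y_k,\delta R)\subset B_k$, because $\delta R=\tfrac12\theta^{m-1}R<R_k$ whenever $k\le m-1$, and we stop.

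Since the cluster count is at most $m$ and increases at every step, we have $k\le m-n_0\le m-1$ at termination. This gives exactly the exponent in $\delta=\tfrac12\theta^{m-1}$.

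\emph{Inductive step.} Suppose some cluster $C$ over $B_k$ has $d_C\ge2$. Set $r_k=R_k/(2\sqrt n+L)$, so that $s(r_k)=R_k$. Because $R_k\le R<R_2$, we have $r_k<r_2$. Proposition~\ref{prop5.4} gives $y\in p_{y_k}(Q_{r_k})\subset\overline B(y_k,\sqrt n\,r_k)$ and points $x,x'\in C\cap f^{-1}(y)$ with $D(x,x')>\varepsilon_0r_k$.

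Put $t=\theta R_k$ and $B_{k+1}=B(y,t)$. The containment $B_{k+1}\subset B_k$ follows from $\sqrt n\,r_k+t<R_k$, which is clear from the size of $\theta$. By Lemma~\ref{lem4.4}, every cluster over $B_{k+1}$ has $D$-diameter at most $2Ltm$. With the given $\theta$ this is
\[
2Lm\theta R_k=\frac{m}{4(m+1)}\,\varepsilon_0 r_k<\varepsilon_0 r_k<D(x,x').
\]
Hence $x$ and $x'$ lie in different clusters over $B_{k+1}$, and both of these sit inside $C$.

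By the nesting statement of Definition~\ref{def4.3}, every cluster over $B_k$ contains at least one cluster over $B_{k+1}$, and $C$ contains at least two. So the number of clusters grows by at least one. Then $R_{k+1}=\theta R_k\ge\theta^{k+1}R$, and the induction continues.

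\emph{Main obstacle.} The substantive input is Proposition~\ref{prop5.4}, which is already available. The step that needs care is the bookkeeping of scales, namely checking three things.
\begin{enumerate}
\item Every stage radius stays below the threshold, so that $r_k<r_2$ at each step.
\item The point $y$ produced by Proposition~\ref{prop5.4} is far enough inside $B_k$ for the next ball to be nested.
\item The diameter bound of Lemma~\ref{lem4.4}, which carries the factor $m$, beats $\varepsilon_0r_k$.
\end{enumerate}
The factor $(m+1)(2\sqrt n+L)$ in $\theta$ is designed to make all three work simultaneously. Two further points to verify are that the cluster count is monotone under nesting and that degree-one clusters carry no branch points. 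The hypothesis $y_0\in f(B_f)$ is not really needed for the argument. It only guarantees that the initial ball does not already consist of degree-one clusters.
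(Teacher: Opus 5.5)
Your proof is correct and follows the paper's argument essentially step for step: the same recursion applies Proposition~\ref{prop5.4} to a cluster of degree at least two, the same factor $\theta$ makes the Lemma~\ref{lem4.4} bound $2Lm\theta\, s(r_k)$ smaller than $\varepsilon_0 r_k$, and the same count gives at most $m-1$ cluster-increasing steps. The only cosmetic difference is that the paper starts the recursion at radius $R/2$ and takes the final ball $B(y_j,s_j)$ itself as the hole, whereas you start at radius $R$ and pass to the concentric ball of radius $\delta R$ at the end; your version shows that the factor $\tfrac12$ in $\delta$ is not actually needed.
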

\begin{proof}
Let $s_0=R/2$, $r_0=s_0/(2\sqrt n+L)<r_2$. We construct $y_i$ and $r_i$ with $s_i=(2\sqrt n+L)r_i$, $r_{i+1}=\theta r_i$, $B(y_{i+1},s_{i+1})\subset B(y_i,s_i)$, such that the number $k_i$ of clusters over $B(y_i,s_i)$ satisfies $k_{i+1}\ge k_i+1$. Since $k_i\le m$, the construction stops after at most $m-1$ steps.

Given $(y_i,r_i)$: if all clusters over $B(y_i,s_i)$ have degree one, then $B(y_i,s_i)\cap f(B_f)=\emptyset$ and we stop. Otherwise pick a cluster $C$ with $d_C\ge2$ and apply Proposition~\ref{prop5.4} with center $y_i$ and radius $r_i$: there are $y_{i+1}\in p_{y_i}(Q_{r_i})\subset\overline B(y_i,\sqrt n\,r_i)$ and $x,x'\in C\cap f^{-1}(y_{i+1})$ with $D(x,x')>\varepsilon_0r_i$. Let $r_{i+1}=\theta r_i$, so $s_{i+1}=\varepsilon_0r_i/(8L(m+1))$ and $\sqrt n\,r_i+3s_{i+1}<s_i$. The clusters over $B(y_{i+1},s_{i+1})$ contained in $C$ are components of $C\cap f^{-1}(B(y_{i+1},s_{i+1}))$; by Lemma~\ref{lem4.4} applied to each such cluster (with center $y_{i+1}$ and radius $s_{i+1}<r_N$), each has diameter at most $2Ls_{i+1}m<\varepsilon_0r_i$. Hence $x$ and $x'$ lie in different clusters over $B(y_{i+1},s_{i+1})$. By the nesting properties in Definition~\ref{def4.3}, $C$ contains at least two of the new clusters and every other old cluster contains at least one, so $k_{i+1}\ge k_i+1$.

At the final step $j\le m-1$, $B(y_j,s_j)\cap f(B_f)=\emptyset$, $B(y_j,s_j)\subset B(y_0,s_0)\subset B(y_0,R)$, and $s_j=\theta^js_0\ge\theta^{m-1}R/2=\delta R$.
\end{proof}

\begin{corollary}\label{cor6.2}
Under the assumptions of Theorem~\ref{thm6.1}, $f(B_f)$ is uniformly $\delta$-porous at scales below $R_2$, $\dim_{\mathcal H}f(B_f)\le\dim_Af(B_f)\le n-c(n,\delta)<n$, and $\vol_N(f(B_f))=0$.
\end{corollary}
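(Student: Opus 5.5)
The plan is to read Corollary~\ref{cor6.2} off Theorem~\ref{thm6.1} by checking the porosity definition directly and then quoting the standard consequences of uniform porosity recorded in Section~\ref{sec2}. First, note that $f(B_f)$ is closed, which the porosity convention requires. The branch set $B_f$ is closed in $M$, since its complement, where $f$ is a local homeomorphism, is open. Because $M$ is compact, $B_f$ is compact, and so is its continuous image $f(B_f)$.

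For porosity, fix $a\in f(B_f)$ and $0<R<R_2$. Theorem~\ref{thm6.1} with $y_0=a$ gives a ball $B(y,\delta R)\subset B(a,R)\setminus f(B_f)$. This is exactly the definition of uniform $\delta$-porosity at scales below $R_2$, with $\delta=\tfrac12\theta^{m-1}$. So this step is immediate. The only thing to check is that $\delta$ depends only on $n,L,\lambda,m$: it does, because $\theta$ is built from $\varepsilon_0$, $L$, $m$, $n$, and $\varepsilon_0$ depends on $L$ and $C_n$, hence on $n$ and $\lambda$.

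For the dimension bounds, the inequality $\dim_{\mathcal H}\le\dim_A$ holds for any set. The strict Assouad bound $\dim_A f(B_f)\le n-c(n,\delta)$ comes from~\cite[Theorem~5.2]{Luu1998}. That result is stated for Euclidean space, or more generally for Ahlfors regular spaces, so I would transfer it through the normal coordinate charts of Lemma~\ref{lem5.2}. These charts are $2$-bi-Lipschitz on balls of radius below $r_N$. Porosity at scales below $R_2\le r_N$ passes to the chart image with $\delta$ replaced by a comparable constant, say $\delta/4$ at scales below $R_2/2$. Assouad dimension is local and bi-Lipschitz invariant, and $N$ is covered by finitely many charts, so the Assouad dimension of $f(B_f)$ is the maximum over these finitely many pieces. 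This chart transfer is the only point needing care, and I expect it to be routine.

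Finally, $\vol_N(f(B_f))=0$ follows because a set of Assouad, hence Hausdorff, dimension strictly less than $n$ has $\HH^n$-measure zero, and $\HH^n_{\rho_N}=\vol_N$. Alternatively, one can cite~\cite[Section~11]{M1995}: by Lebesgue density, a porous set has no density points.
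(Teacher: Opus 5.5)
Your proposal is correct and matches the paper's proof. You read the porosity directly off Theorem~\ref{thm6.1}, transfer it through the $2$-bi-Lipschitz charts of Lemma~\ref{lem5.2} to apply Luukkainen's Euclidean theorem~\cite{Luu1998}, and then use $\dim_{\mathcal H}\le\dim_A$ and the fact that a set of dimension less than $n$ is null. The closedness of $f(B_f)$ and the dependence of $\delta$ on $n,L,\lambda,m$, which the paper leaves implicit, are correctly checked.
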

\begin{proof}
$N$ is locally $2$-bi-Lipschitz to $\R^n$ at scales below $r_N$ (Lemma~\ref{lem5.2}), porosity is preserved by bi-Lipschitz maps up to constants, and a uniformly porous subset of $\R^n$ has Assouad dimension at most $n-c(n,\delta)$~\cite{Luu1998}; Hausdorff dimension never exceeds Assouad dimension, and a set of dimension less than $n$ is null.
\end{proof}

\begin{corollary}\label{cor6.3}
Let $f\colon M\to N$ be an orientation-preserving branched cover of degree $m\ge2$ between a closed connected oriented topological $n$-manifold and a closed connected oriented Riemannian $n$-manifold. If $f$ admits a compatible quasiconvex, linearly locally contractible metric for which it is $L$-BLD, then $f(B_f)$ is uniformly porous at small scales with constant $\delta(n,L,C\lambda,m)$. In particular $\vol_N(f(B_f))>0$, or merely $\dim_{\mathcal H}f(B_f)=n$, excludes all such metrics, and, if $M$ is homeomorphic to a smooth manifold, all Ahlfors $n$-regular linearly locally contractible ones.
\end{corollary}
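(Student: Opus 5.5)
The plan is to reduce Corollary~\ref{cor6.3} to Theorem~\ref{thm6.1} and Corollary~\ref{cor6.2} by passing from the given metric to a length metric. Let $d$ be a compatible $C$-quasiconvex metric with contractibility data $(\lambda,r_*)$ for which $f$ is $L$-BLD. By Lemma~\ref{lem2.4}, the induced length metric $D$ satisfies $d\le D\le Cd$. It is a compatible compact length metric with contractibility data $(C\lambda,r_*)$, and it has the same path lengths as $d$. Hence $f\colon(M,D)\to(N,\rho_N)$ is still $L$-BLD with the same constant. We are therefore exactly in the setting of Section~\ref{sec5}, with $\lambda$ replaced by $C\lambda$.

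Next we apply Theorem~\ref{thm6.1} and Corollary~\ref{cor6.2} to $(M,D)$. For every $y_0\in f(B_f)$ and $0<R<R_2$ they give a hole $B(y,\delta R)\subset B(y_0,R)\setminus f(B_f)$. Here $\delta=\tfrac12\theta^{m-1}$ and $\theta$ is built from $\varepsilon_0$, which depends only on $n,L$ and $C_n$, and $C_n$ depends only on $n$ and $C\lambda$. So $\delta=\delta(n,L,C\lambda,m)$, while the threshold $R_2$ depends in addition on $r_*$ and $N$; this is all "at small scales" requires. Corollary~\ref{cor6.2} then gives $\dim_{\mathcal H}f(B_f)\le n-c<n$ and $\vol_N(f(B_f))=0$. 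Taking the contrapositive, either $\vol_N(f(B_f))>0$ or $\dim_{\mathcal H}f(B_f)=n$ excludes every such metric.

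For the final clause, suppose $M$ is homeomorphic to a smooth manifold and $d$ is a compatible Ahlfors $n$-regular, linearly locally contractible metric for which $f$ is BLD. Corollary~\ref{cor2.3} shows that $d$ is quasiconvex, so the previous case applies. Since $M$ is closed and oriented, the hypotheses of Theorem~\ref{thm2.2} on orientability and compactness are met.

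The only points needing care are bookkeeping rather than real obstacles. First, the BLD constant must be unchanged under $d\mapsto D$; this follows from $\ell_D=\ell_d$. Second, the contractibility constant enters as $C\lambda$, not $\lambda$, which explains the form $\delta(n,L,C\lambda,m)$ in the statement. Third, Section~\ref{sec5} requires a length metric, which is precisely why the passage through Lemma~\ref{lem2.4} is needed.
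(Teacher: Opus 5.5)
Your proposal is correct and follows the paper's proof: you pass to the length metric of Lemma~\ref{lem2.4}, which has the same path lengths and contractibility data $(C\lambda,r_*)$. You then apply Theorem~\ref{thm6.1} and Corollary~\ref{cor6.2}, and you handle Ahlfors $n$-regular metrics through Corollary~\ref{cor2.3}.
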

\begin{proof}
Lemma~\ref{lem2.4} replaces the metric by a length metric with the same path lengths and contractibility data $(C\lambda,r_*)$; apply Theorem~\ref{thm6.1} and Corollary~\ref{cor6.2}. For Ahlfors regular metrics use Corollary~\ref{cor2.3}.
\end{proof}

\begin{theorem}\label{thm6.4}
Under the assumptions of Theorem~\ref{thm6.1} there is $t_2>0$, depending on $n,L,\lambda,m,r_*,N$, such that for every $z\in B_f$ and $0<t<t_2$ there is $w\in B_D(z,t)$ with
\[
 \diam_D\bigl(B_D(z,t)\cap f^{-1}(f(w))\bigr)\ge\varepsilon_N\,t,\qquad\varepsilon_N=\varepsilon_0\theta .
\]
Consequently $B_f$ is uniformly porous in $(M,D)$ at small scales, and $B_f$ is a null set for every doubling measure on $(M,D)$.
\end{theorem}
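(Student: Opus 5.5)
Apply Proposition~\ref{prop5.4} to the cluster containing $z$ over a target ball whose radius is a fixed fraction of $t$. Lemma~\ref{lem4.4} will put the whole cluster inside $B_D(z,t)$. The porosity of $B_f$ will not use the Newman estimate; instead we pull back the holes of Theorem~\ref{thm6.1} through the Lipschitz map $f$.

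\emph{Newman estimate.} Fix $z\in B_f$ and put $y_0=f(z)$. Set
\[
 s=\frac{t}{2Lm+1},\qquad r=\frac{s}{2\sqrt n+L},
\]
so that $s=s(r)$ in the notation of~\eqref{eq5.2}. Define $t_2=(2Lm+1)(2\sqrt n+L)\,r_2$. Then $t<t_2$ gives $r<r_2$ and $s<r_N$.

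Let $C$ be the cluster over $B(y_0,s)$ containing $z$, and let $F=C\cap f^{-1}(y_0)$. By Definition~\ref{def4.3} the degree $d_C$ is the sum of the local indices over $F$. Hence $d_C\ge i(z,f)\ge2$, since $z$ is a branch point.

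By Lemma~\ref{lem4.4}, every point of $C$ lies within $Ls$ of $F$, and $\diam_D F\le 2Ls(m-1)$. Since $z\in F$, every point of $C$ is within
\[
 2Ls(m-1)+Ls=Ls(2m-1)<t
\]
of $z$. Thus $C\subset B_D(z,t)$.

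Proposition~\ref{prop5.4} now gives $y\in p_{y_0}(Q_r)$ with $\diam_D(C\cap f^{-1}(y))>\varepsilon_0 r$. Pick any $w$ in this fiber. Then $C\cap f^{-1}(f(w))\subset B_D(z,t)\cap f^{-1}(f(w))$. It remains to compare constants:
\[
 \varepsilon_0 r=\frac{\varepsilon_0 t}{(2Lm+1)(2\sqrt n+L)}\ge\varepsilon_0\theta\, t .
\]
This holds because $\varepsilon_0<1$ and $8L(m+1)\ge 2Lm+1$, so $\theta\le 1/\bigl((2Lm+1)(2\sqrt n+L)\bigr)$. This proves the displayed estimate with $\varepsilon_N=\varepsilon_0\theta$.

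\emph{Porosity of $B_f$.} Let $z\in B_f$ and let $t$ be small. By Lemma~\ref{lem4.2},
\[
 B\bigl(f(z),t/(2L)\bigr)\subset f\bigl(B_D(z,t/2)\bigr).
\]
Since $f(z)\in f(B_f)$, Theorem~\ref{thm6.1} (for $t/(2L)<R_2$) gives a ball $B(y,\delta t/(2L))$ inside $B(f(z),t/(2L))$ disjoint from $f(B_f)$. Choose $x\in B_D(z,t/2)$ with $f(x)=y$. Since $f$ is $L$-Lipschitz by Lemma~\ref{lem2.5}, the ball $B_D(x,\delta t/(2L^2))$ maps into $B(y,\delta t/(2L))$. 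Hence it misses $B_f$. It also lies in $B_D(z,t)$, because $\delta t/(2L^2)<t/2$. So $B_f$ is uniformly porous with constant $\delta/(2L^2)$ at small scales. The same argument works around arbitrary centers, which suffices for the porosity convention.

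\emph{Null set.} Let $\mu$ be a doubling measure on $(M,D)$. By the Lebesgue density theorem, $\mu$-almost every point of $B_f$ is a density point. But porosity gives, at every scale, a sub-ball of comparable radius missing $B_f$. By doubling, this sub-ball has measure at least a fixed fraction of the ball, so the density of $B_f$ is at most one minus that fraction. Hence $\mu(B_f)=0$.

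The main point needing care is the first step: guaranteeing simultaneously that $d_C\ge2$ and that the whole cluster fits inside $B_D(z,t)$. Both come from choosing $s$ comparable to $t/(Lm)$ via Lemma~\ref{lem4.4}, and the resulting loss is absorbed into $\theta$.
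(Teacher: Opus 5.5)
Your proposal is correct and follows essentially the paper's proof. You apply Proposition~\ref{prop5.4} to the cluster of $z$ and use Lemma~\ref{lem4.4} to place that cluster inside $B_D(z,t)$. You then obtain porosity of $B_f$ by pulling back the holes of Theorem~\ref{thm6.1} through the $L$-Lipschitz map $f$, and finish with the density argument. Your larger choice $r=t/((2Lm+1)(2\sqrt n+L))$, in place of the paper's $r=\theta t$, only improves the constants, and your remark about arbitrary centers is unnecessary because the porosity convention only tests centers in $B_f$.
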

\begin{proof}
Let $y_0=f(z)$, $r=\theta t$ and $s=(2\sqrt n+L)r$. Choose $t_2>0$ so that $r<r_2$, $2s<r_N$ and $t/(4L)<R_2$ whenever $t<t_2$. Let $C$ be the cluster over $B(y_0,s)$ containing $z$. Lemma~\ref{lem4.4} gives
\[
 \diam_DC\le2Lsm
 =2L(2\sqrt n+L)m\theta t
 \le\varepsilon_0t/4<t.
\]
Since $z\in C$, this implies $C\subset B_D(z,t)$. Also $d_C\ge i(z,f)\ge2$. Proposition~\ref{prop5.4} gives a value $y$ and two points $x,x'\in C\cap f^{-1}(y)$ with
\[
 D(x,x')>\varepsilon_0r=\varepsilon_0\theta t.
\]
Taking $w=x$ proves the stated estimate. Applying this uniform estimate to branch-centered balls in a sufficiently small neighborhood of any fixed $z_0\in B_f$ gives the Newman property at every branch point in the sense of~\cite{L2019}. The ball centers in that definition lie in $B_f$, as recalled in Section~\ref{sec2}.

We give a direct quantitative deduction of source porosity. For $z\in B_f$ and $0<t<t_2$, apply Theorem~\ref{thm6.1} with $R=t/(4L)$ to obtain
\[
 B(y,\delta R)\subset B(f(z),R)\setminus f(B_f).
\]
Lift a minimizing geodesic from $f(z)$ to $y$ starting at $z$, and denote its endpoint by $w$. Lemma~\ref{lem2.5} gives $D(z,w)<LR=t/4$. Set $\delta_B=\delta/(8L^2)$. Since $f$ is $L$-Lipschitz,
\[
 f(B_D(w,\delta_Bt))\subset B(y,\delta R/2),\qquad
 B_D(w,\delta_Bt)\subset B_D(z,t)\setminus B_f.
\]
This proves uniform porosity with an explicit constant and is consistent with Luisto's Newman--porosity implication~\cite[Theorem~1.1]{L2019}. A uniformly porous closed set is null for any locally finite doubling Borel measure with full support: a hole of a fixed relative radius has a fixed positive fraction of the measure of its containing ball, contradicting the density theorem at almost every point of a positive-measure set; see~\cite{H2001,M1995}. This is the convention for doubling measures in the statement.
\end{proof}

\begin{proof}[Proof of Theorem~\textup{\ref{thm1.5}}]
For part~\textup{(i)}, Theorem~\ref{thm6.1} supplies a hole in the entire branch-value set at every radius below $R_2$, with $\delta=\tfrac12\theta^{m-1}$ depending only on $n,L,\lambda,m$. Corollary~\ref{cor6.2} converts this uniform porosity into the asserted Assouad and Hausdorff dimension bounds and the zero-volume conclusion. For part~\textup{(ii)}, Theorem~\ref{thm6.4} proves the Newman estimate with $\varepsilon_N=\varepsilon_0\theta$ and source porosity with $\delta_B=\delta/(8L^2)$. Equations~\eqref{eq5.2} and the choice of $t_2$ show the stated dependence of the scale thresholds. The cluster obstruction and these deductions use no source measure.
\end{proof}

The following lemma explains what the recursion in Theorem~\ref{thm6.1} is looking for. Write $\sep(y)=\min\{D(x,x')\colon x\ne x'\in f^{-1}(y)\}$ if $\#f^{-1}(y)=m$ and $\sep(y)=0$ otherwise.

\begin{lemma}\label{lem6.5}
Let $y\in N$ and $0<s<r_N$. If $\sep(y)>2Ls$ then $B(y,s)\cap f(B_f)=\emptyset$; if $B(y,s)\cap f(B_f)=\emptyset$ then $\sep(y)\ge2s/L$.
\end{lemma}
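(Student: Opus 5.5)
The two implications are proved separately, using the cluster structure over target balls together with the path-lifting bounds of Lemmas~\ref{lem2.5} and~\ref{lem4.4}. Throughout, $D$ is a length metric and $f$ is $L$-BLD.

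\emph{First implication.} Assume $\sep(y)>2Ls$. Then $f^{-1}(y)$ consists of exactly $m$ points, and any two of them are at $D$-distance greater than $2Ls$. Let $C$ be any cluster over $B(y,s)$ and put $F=C\cap f^{-1}(y)$. By Lemma~\ref{lem4.4}, the graph on $F$ joining points at distance at most $2Ls$ is connected. Since no two points of $f^{-1}(y)$ are that close, $F$ is a single point $x$. Because $\#f^{-1}(y)=m$, that point has $i(x,f)=1$, so $d_C=1$ by \eqref{eq2.2} and the definition of the cluster degree. A cluster of degree one maps homeomorphically onto $B(y,s)$ and contains no branch point (Definition~\ref{def4.3}). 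Every point of $f^{-1}(B(y,s))$ lies in some cluster, so $B(y,s)\cap f(B_f)=\emptyset$.

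\emph{Second implication.} Assume $B(y,s)\cap f(B_f)=\emptyset$. Then every point over $y$ is regular, so $\#f^{-1}(y)=m$ and $\sep(y)$ is a genuine minimum. Moreover, each cluster over $B(y,s)$ has degree one: its local indices are all one, and by the constancy of $\mu$ the cluster meets each fibre in $d_C$ points. Since $B(y,s)$ is simply connected and free of branch values, the restriction of $f$ over it is an $m$-sheeted covering, hence trivial, and each cluster is a homeomorphic sheet.

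Now take $x\ne x'$ in $f^{-1}(y)$ and a path $\gamma$ from $x$ to $x'$. These two points lie in distinct sheets, since each sheet contains exactly one point of the fibre. Hence $\gamma$ cannot stay inside $f^{-1}(B(y,s))$: a connected path inside that set lies in a single component. So $\gamma$ leaves the cluster of $x$ and meets its boundary, which maps into $\partial B(y,s)$. Thus $f\circ\gamma$ travels from $y$ to distance $s$ and returns to $y$, giving $\ell_{\rho_N}(f\circ\gamma)\ge 2s$. The lower BLD inequality then gives $\ell_D(\gamma)\ge 2s/L$. Taking the infimum over paths and using that $D$ is a length metric yields $D(x,x')\ge 2s/L$, hence $\sep(y)\ge 2s/L$.

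\emph{Main obstacle.} The only delicate step is the first implication's deduction of $d_C=1$ from the singleton fibre. This works because the fibre over $y$ has full cardinality $m$, which forces every local index over $y$ to be one. One must keep this hypothesis in view and not try to infer regularity from the distance bound alone.
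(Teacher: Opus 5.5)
Your proof is correct and follows the paper's argument. For the first implication, Lemma~\ref{lem4.4} leaves one fiber point per cluster, and you get degree one from local indices where the paper counts clusters, an immaterial difference. For the second, triviality of the covering over the simply connected ball and the exit-and-return estimate $\ell_{\rho_N}(f\circ\gamma)\ge 2s$ match the paper. One terminological slip: the inequality that turns this into $\ell_D(\gamma)\ge 2s/L$ is the upper one, $\ell_{\rho_N}(f\circ\gamma)\le L\,\ell_D(\gamma)$, in the paper's usage, not the lower one.
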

\begin{proof}
If $\sep(y)>2Ls$, Lemma~\ref{lem4.4} shows that each cluster over $B(y,s)$ contains one fiber point, so there are $m$ clusters, all of degree one, and none contains a branch point. Conversely, if $B(y,s)$ contains no branch value, $f$ is a covering over the simply connected ball, the $m$ preimages of $y$ lie in different clusters, and a path joining two of them has an image that leaves $B(y,s)$ and returns, of length at least $2s$; use~\eqref{eq2.3}.
\end{proof}

\subsection{Porous branch values do not suffice}
We now verify the geometry of the branch-value sets in Section~\ref{sec3}. This makes the connection between the essential-loop obstruction and the porosity theorem explicit.

\begin{proposition}\label{prop6.6}
For the covers $F_n$ constructed in Section~\ref{sec3}, the sets $A_n=F_n(B_{F_n})$ satisfy \textup{(i)}--\textup{(ii)} of Corollary~\ref{cor1.6}.
\end{proposition}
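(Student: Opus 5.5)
We verify (i) and (ii) first for $n=3$ and then transfer them to $n>3$ through the explicit suspension formula~\eqref{eq3.8}. For $n=3$, Section~\ref{sec3} gives $A_3=\psi(\Gamma)$ with $\Gamma=H(C_0)$, and $A_3=h(E)$, where $E$ is the round great circle and $h$ is the ambient homeomorphism. This gives the ambient homeomorphism in (i). Since $H=S\circ T$ leaves the third coordinate unchanged, $\Gamma$ lies in the plane $\{z=0\}$. By the choice of pole fixed in Section~\ref{sec3}, $\psi(\{z=0\})\cup\{p_\infty\}$ is the great two-sphere $G_3$, so $A_3\subset G_3$.

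For $n>3$ we first identify the branch set of $F_n$. By Lemma~\ref{lem3.5}, on $|v|<1$ the map $F_n$ is $f_3\times\id$, so there its branch points are the points $(\sqrt{1-|v|^2}\,u,v)$ with $u\in B_{f_3}$. Each point $(0,v)$ with $|v|=1$ has a singleton fiber although $\deg F_n=2$, so it is also a branch point; by~\eqref{eq2.2} its local index is $2$. Hence
\[
 A_n=\{(\sqrt{1-|v|^2}\,a,v)\colon a\in A_3,\ |v|\le1\},
\]
the $k$-fold iterated suspension of $A_3$. The same suspension formula applied to $h$, namely $(\sqrt{1-|v|^2}\,u,v)\mapsto(\sqrt{1-|v|^2}\,h(u),v)$, defines a homeomorphism of $\Sn$. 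It carries the iterated suspension of $E$, which is a round equator $\mathbb S^{n-2}$, onto $A_n$; continuity at $|v|=1$ is checked as in Lemma~\ref{lem3.5}. Moreover, $A_n$ lies in $\Sn\cap(\operatorname{span}G_3\times\R^k)$, which is a round great $(n-1)$-sphere $G_n$. This proves (i).

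For the porosity in (ii) we use only $A_n\subset G_n$, with $G_n$ a totally geodesic great hypersphere. Given $a\in A_n$ and $0<R<\pi$, let $y$ be the point at distance $R/2$ from $a$ along the unit normal geodesic to $G_n$ at $a$. Since $R/2<\pi/2$, we have $\rho(y,G_n)=R/2$. The triangle inequality then gives $B(y,R/4)\subset B(a,R)$ because $R/2+R/4<R$. It also gives $B(y,R/4)\cap G_n=\emptyset$, so $B(y,R/4)$ misses $A_n$. This is uniform $1/4$-porosity at all scales below $\pi$.

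For the dimension, the lower bound is topological. By (i), $A_n$ is homeomorphic to $\mathbb S^{n-2}$, so its topological dimension is $n-2$, and Hausdorff dimension is at least topological dimension. For the upper bound we show that $A_n$ is a locally Lipschitz image of a set of dimension $n-2$.
\begin{enumerate}
\item The map $T$ sends $C_0$ onto the diamond, which is a finite union of segments.
\item The map $S(x,y)=(x,y|y|)$ is $C^1$, hence Lipschitz on compacts, so $\Gamma$ is a compact rectifiable curve.
\item $\Gamma$ is bounded in $\R^3$ and hence stays away from the pole, so $\psi$ is Lipschitz on a neighborhood of $\Gamma$. Thus $A_3$ is rectifiable and $\dim_{\HH}A_3=1$.
\item $A_n$ is the image of $A_3\times\overline{B}_{\R^k}(0,1)$ under the Lipschitz map $(a,v)\mapsto(\sqrt{1-|v|^2}\,a,v)$. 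Because of the square-root factor near $|v|=1$, we cover $A_n$ instead by countably many pieces: the images of $A_3\times\overline B_{\R^k}(0,1-1/j)$, on which the map is Lipschitz, together with the parts near $\{0\}\times\mathbb S^{k-1}$. Near each point of $\{0\}\times\mathbb S^{k-1}$ the set $A_n$ is the corresponding suspension taken in a rotated set of coordinates, in which such a point lies at $v=0$, so the same Lipschitz argument applies.
\end{enumerate}
Hence $A_n$ is a countable union of Lipschitz images of sets of Hausdorff dimension $1+k=n-2$, and $\dim_{\HH}A_n=n-2$.

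The main obstacle is this last bookkeeping near $|v|=1$, where the suspension map is not Lipschitz. The alternative is a direct covering estimate: cover $A_n$ by roughly $\epsilon^{-(n-2)}$ balls of radius $\epsilon$, using $\epsilon^{-1}$ balls for the curve $A_3$ times a grid in $v$.
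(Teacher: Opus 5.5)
Your proposal follows the paper's proof essentially step for step. The shared steps are:
\begin{enumerate}
\item $A_3=h(E)=\psi(\Gamma)\subset G_3$;
\item $A_n$ is the iterated suspension of $A_3$, and the suspended homeomorphism carries a great $(n-2)$-sphere onto it;
\item $A_n\subset G_n$, with the hole $B(y,R/4)$ centred on the normal geodesic at distance $R/2$;
\item Lipschitz pieces over $|v|\le1-1/j$ give the upper dimension bound.
\end{enumerate}
Two local variations are valid. First, you detect the branch points $(0,v)$, $|v|=1$, via the singleton fibre and \eqref{eq2.2}, whereas the paper uses failure of local injectivity. Second, you get $\dim_{\mathcal H}A_n\ge n-2$ from $A_n\cong\mathbb S^{n-2}$ and the Szpilrajn inequality $\dim_{\mathrm{top}}\le\dim_{\mathcal H}$. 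The paper instead exhibits a smooth embedded $(n-2)$-dimensional piece: a smooth subarc of $A_3$ times a small $v$-ball.

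The step you single out as the main obstacle is, however, justified by a false claim. Near a point of $\{0\}\times\mathbb S^{k-1}$, the set $A_n$ is not in general a suspension in rotated coordinates with that point at $v=0$. For $k=1$, a neighbourhood of a pole in $A_4$ is the spherical cone over $A_3$. It could be locally a suspension there only if, near each $a\in A_3$, $A_3$ contained an arc of the great circle through $a$ and a fixed $p$. That would make $A_3$ a union of two great semicircles from $p$ to $-p$, which the cusp curve is not. For $k\ge2$, associativity of joins does let you write $A_n$ as a $(k-1)$-fold suspension with the point at $v'=0$. But this replaces $A_3$ by its single suspension, whose poles are the same $k=1$ problem.

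Fortunately, nothing needs to be done near the pole sphere. The images of $A_3\times\overline B_{\R^k}(0,1-1/j)$, $j\ge2$, already exhaust every point of $A_n$ with $|v|<1$. What remains is exactly $\{0\}\times\mathbb S^{k-1}$, a round sphere of dimension $k-1=n-4$. Countable stability of Hausdorff dimension then gives $\dim_{\mathcal H}A_n\le n-2$, and this is how the paper concludes.

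If you want a single global parametrization instead of your grid count, use $(a,w,\phi)\mapsto(\cos\phi\,a,\sin\phi\,w)$ on $A_3\times\mathbb S^{k-1}\times[0,\pi/2]$. It is Lipschitz everywhere, so the square-root degeneration at $|v|=1$ never appears.
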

\begin{proof}
In dimension three, $A_3=\psi(\Gamma)=h(E)$. The construction of $T$ and $S$ gives the full formula
\[
 \Gamma=
 \{(s,\pm s^2,0):0\le s\le1\}
 \,\cup\,
 \{(s,\pm(2-s)^2,0):1\le s\le2\}.
\]
Thus $\Gamma$ is a finite union of smooth rectifiable arcs with at least one nondegenerate smooth subarc. The stereographic chart and its inverse are bi-Lipschitz on a compact neighborhood of $\Gamma$, so $\dim_{\mathcal H}A_3=1$. Moreover, $\Gamma\subset\{z=0\}$ and the chosen coordinates identify that plane with a subset of the great sphere $G_3$. Finally, the ambient homeomorphism $h$ carries the standard circle $E$ onto $A_3$.

Suppose $n>3$ and put $k=n-3$. The product description in Lemma~\ref{lem3.5} gives
\begin{equation}\label{eq6.1}
 A_n=
 \bigl\{(\sqrt{1-|v|^2}\,a,v):a\in A_3,\ |v|<1\bigr\}
 \,\cup\,\bigl(\{0\}\times\mathbb S^{k-1}\bigr).
\end{equation}
Indeed, in the product region branching occurs exactly over $A_3$. Every point of the remaining sphere is also a branch point: every one of its neighborhoods contains two points with the same image, by the degree-two product description, so $F_n$ cannot be locally injective there. These points are fixed by $F_n$.

Let $V\subset\R^4$ be the three-dimensional linear subspace with $G_3=V\cap\mathbb S^3$. Equation~\eqref{eq6.1} places $A_n$ in
\[
 G_n=(V\times\R^k)\cap\Sn,
\]
a round great $(n-1)$-sphere. Suspending $h$ defines an ambient homeomorphism
\[
 h_n(\sqrt{1-|v|^2}\,u,v)
   =(\sqrt{1-|v|^2}\,h(u),v),\qquad
 h_n(0,v)=(0,v).
\]
Continuity at $|v|=1$ follows as in Lemma~\ref{lem3.5}, and its inverse is obtained by suspending $h^{-1}$. It carries the standard equator $\mathbb S^{n-2}$, the corresponding suspension of $E$, onto $A_n$. This proves the ambient topological assertion.

For the dimension bound, parameterize $A_3$ by its four smooth arcs. On the product of each parameter interval with each closed ball $|v|\le1-1/j$, $j\ge2$, the map in~\eqref{eq6.1} is Lipschitz. These countably many images cover the part with $|v|<1$ and have Hausdorff dimension at most $1+k=n-2$. The remaining sphere has dimension $k-1=n-4$. Conversely, a nondegenerate smooth subarc of $A_3$ times a small open ball in the $v$-variables gives a smooth embedded submanifold of dimension $1+k$. Hence $\dim_{\mathcal H}A_n=n-2$.

It remains to quantify porosity, uniformly in the dimension. For every $n\ge3$, write $G_n=\Sn\cap\nu^\perp$ with $|\nu|=1$. Given $a\in A_n$ and $0<R<\pi$, set
\[
 y=\cos(R/2)\,a+\sin(R/2)\,\nu.
\]
Since $R/2<\pi/2$, we have $\rho_n(a,y)=R/2$ and $\dist_{\rho_n}(y,G_n)=R/2$. The triangle inequality therefore gives
\[
 B_{\rho_n}(y,R/4)\subset
 B_{\rho_n}(a,R)\setminus G_n
 \subset B_{\rho_n}(a,R)\setminus A_n.
\]
This is the asserted uniform $1/4$-porosity.
\end{proof}

\begin{proof}[Proof of Corollary~\ref{cor1.6}]
Theorem~\ref{thm1.2} provides the degree-two covers and excludes every compatible quasiconvex linearly locally contractible BLD source metric for them. Proposition~\ref{prop6.6} proves \textup{(i)}--\textup{(ii)}. By Theorem~\ref{thm1.4}(i), $d_{F_n}$ is compatible, geodesic and Ahlfors $n$-regular, and $F_n$ is $1$-BLD from this metric. If it were linearly locally contractible, it would itself be a quasiconvex source metric forbidden by Theorem~\ref{thm1.2}. This proves \textup{(iii)} and the failure of sufficiency.
\end{proof}

\section{Target changes, measure obstructions, and the planar case}\label{sec7}

The previous section gives nonadmissible covers with small, uniformly porous branch-value sets. We now compare them with the standard positive examples, the positive-volume obstruction, and target changes that preserve admissibility.

\subsection{The winding cover}
Write $\Sn=\{(z,u)\in\C\times\R^{n-1}\colon|z|^2+|u|^2=1\}$ and $E=\{(0,u)\colon|u|=1\}$. For $m\ge2$ let $q_m(z,u)=(z^m/|z|^{m-1},u)$ for $z\ne0$ and $q_m(0,u)=(0,u)$.

\begin{lemma}\label{lem7.1}
$q_m\colon\Sn\to\Sn$ is an orientation-preserving branched cover of degree $m$ with $B_{q_m}=q_m(B_{q_m})=E$, $q_m^{-1}(a)=\{a\}$ for $a\in E$, and exactly $m$ preimages of every point outside $E$. It is $1$-BLD from $(\Sn,d_{q_m})$, and $(\Sn,d_{q_m})$ is linearly locally contractible; in particular $q_m$ admits an admissible metric.
\end{lemma}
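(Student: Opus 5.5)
The plan is to verify the topological assertions first and then obtain linear local contractibility of $(\Sn,d_{q_m})$ by comparison with an explicit model metric. Theorem~\ref{thm1.4}(i) already gives compatibility, the $1$-BLD property and Ahlfors regularity, so only contractibility needs real work.

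For the topology, continuity of $q_m$ away from $E$ is clear. At $E$ it holds because $|z^m/|z|^{m-1}|=|z|\to0$. Since $q_m$ preserves $|z|$ and $u$, it maps $\Sn$ to $\Sn$. Off $E$ the map is the $m$-fold covering $z\mapsto z^m/|z|^{m-1}$ of $\C\setminus\{0\}$ (in polar form $re^{i\phi}\mapsto re^{im\phi}$) times the identity in $u$. This gives $m$ preimages, local homeomorphy and positive orientation there. At $(0,u)$ the fiber is a single point. Openness at such points follows because a small neighborhood of the form $\{|z|<\epsilon,\ u\in W\}$ is mapped onto itself. Hence $B_{q_m}=E$: points of $E$ are limits of $m$-to-$1$ behaviour, so $q_m$ is not locally injective there. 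Degree $m$ and sense preservation follow from counting regular preimages, exactly as in Lemma~\ref{lem3.5}.

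For the metric, I would compute $d_{q_m}$ explicitly. The round metric in coordinates $z=re^{i\phi}$ is $g=g_0+r^2d\phi^2$, where $g_0$ involves only $dr$ and $du$ on the constraint $r^2+|u|^2=1$. Its pullback by $q_m$ is $g_0+m^2r^2d\phi^2$. This is the length structure of the cone-type metric $\tilde g$ with cone angle $2\pi m$ along $E$, and $d_{q_m}$ is its length metric, since path lengths in $d_{q_m}$ are $\ell_\rho(q_m\circ\gamma)$ (Proposition~\ref{prop4.1}(ii)). The key observation is then a comparison with the round metric on the source. The identity map $(\Sn,\rho)\to(\Sn,d_{q_m})$ distorts lengths of paths by at most a factor $m$, since $g\le\tilde g\le m^2g$ pointwise on $\Sn\setminus E$. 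The set $E$ has codimension two, so every path can be approximated by paths meeting $E$ in few points and lengths are computed off $E$. Hence $\rho\le d_{q_m}\le m\rho$. Linear local contractibility of the round sphere (with constants $\lambda=2$ and $r_*$ small, via geodesic contraction of balls of radius below $\pi/2$) then transfers by the bi-Lipschitz invariance used in the proof of Theorem~\ref{thm1.4}(iii), with $\lambda$ replaced by $2m^2$.

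The main obstacle is justifying $d_{q_m}\le m\rho$ rigorously, including for paths running along or through $E$. I would handle it directly. Given a $\rho$-rectifiable path $\gamma$, write the image path $q_m\circ\gamma$ in the coordinates $(r,\phi,u)$. The components $r$ and $u$ are unchanged, and only the angular speed is multiplied by $m$ wherever $r>0$. On $\{r=0\}$ the angle is irrelevant, because there the length element involves only $dr$ and $du$. Hence the metric derivative of $q_m\circ\gamma$ is at most $m$ times that of $\gamma$ almost everywhere. Integrating gives $\ell_\rho(q_m\circ\gamma)\le m\,\ell_\rho(\gamma)$. A partition-based argument can replace metric derivatives if one prefers: on each small subinterval, compare chordal distances using $|z^m/|z|^{m-1}-w^m/|w|^{m-1}|\le m|z-w|$, an elementary inequality for the map $re^{i\phi}\mapsto re^{im\phi}$. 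The lower bound $\rho\le d_{q_m}$ follows from the same coordinate description, since $q_m$ does not decrease the length element.
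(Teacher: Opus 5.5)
Your proposal is correct and follows the paper's route: show that the round source metric makes $q_m$ BLD (you get the explicit comparison $\rho\le d_{q_m}\le m\rho$, where the paper cites the BLD winding map in tubular charts), then transfer linear local contractibility by the bi-Lipschitz argument of Theorem~\ref{thm1.4}(iii). The adjustment is one of emphasis: your chordal estimate already makes the upper bound the easy direction, while the lower length inequality needs a line of proof because $q_m$ identifies points arbitrarily close to $E$ and paths need not be known rectifiable in advance; insert points of $\gamma^{-1}(E)$ into partitions, use $|q_m(x)-q_m(a)|=|x-a|$ for $a\in E$, and note that $q_m$ does not decrease chordal distances at small scales away from $E$.
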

\begin{proof}
The map preserves $|z|$ and $u$. Away from $E$ it is, in polar coordinates of the $z$-plane, $(r,\vartheta,u)\mapsto(r,m\vartheta,u)$, a local orientation-preserving homeomorphism with $m$ preimages of each point. Near $(0,u_0)\in E$ a tubular chart $(z,v)\mapsto(z,\sqrt{1-|z|^2}\varphi(v))$ turns it into the product of the planar winding map $re^{i\vartheta}\mapsto re^{im\vartheta}$ with the identity, which is open, has local degree $m$ on $\{z=0\}$, and is not locally injective there. This proves the topological assertions. The planar winding map $re^{i\vartheta}\mapsto re^{im\vartheta}$ is the standard BLD winding map~\cite[Chapter~I, Example~3.1]{R1993}. Its product with the identity has the same length inequalities, as follows either from the product length formula or by applying that formula after arclength parametrization; infinite-length paths follow by the same inequalities on subpaths. The tubular charts just used are locally bi-Lipschitz to Euclidean product charts. Away from $E$ the differential of $q_m$ has singular values between $1$ and $m$ in the round metric. A finite cover by these charts therefore gives local BLD inequalities with one finite constant. Subdividing any path into these chart neighborhoods gives the global BLD inequalities, also when its length is infinite. The round source metric is consequently admissible, and Theorem~\ref{thm1.4}(iii) shows that $(\Sn,d_{q_m})$ is linearly locally contractible.
\end{proof}

\subsection{Transporting mass onto a tame sphere}
A Borel probability measure on a closed manifold is \emph{good} if it has no atoms and gives positive mass to every nonempty open set.

\begin{theorem}\cite{AP2000,Fa1980,OU1941}\label{thm7.2}
On a closed connected triangulable manifold of dimension at least two, any two good probability measures are carried to one another by a homeomorphism.
\end{theorem}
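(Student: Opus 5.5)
This is the classical theorem of Oxtoby and Ulam in the form of Fathi and Alpern--Prasad. I would prove it by reducing to the cube and then applying the cube version, rather than working on $M$ directly.

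\emph{Step 1: the cube.} Let $Q=[0,1]^n$. I would first prove: if $\mu,\nu$ are good probability measures on $Q$ with $\mu(\partial Q)=\nu(\partial Q)=0$, then some homeomorphism of $Q$ fixing $\partial Q$ pointwise carries $\mu$ to $\nu$.

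\begin{enumerate}
\item Since $\mu$ has no atoms, the function $t\mapsto\mu(\{x_1\le t\})$ is continuous. Since $\mu$ has full support, it is strictly increasing. So I can choose a hyperplane $x_1=t$ splitting $\mu$ in any prescribed proportion, and similarly for $\nu$.
\item Only countably many slices $x_1=t$ carry positive mass for $\mu$ or for $\nu$. Hence the cutting hyperplanes can always be chosen null for both measures.
\item Iterating over the coordinate directions, I build nested box subdivisions $\mathcal P_k$ for $\mu$ and $\mathcal P'_k$ for $\nu$. They have the same combinatorics, matching masses on corresponding boxes, and mesh tending to zero.
\item Matching boxes give piecewise-affine homeomorphisms $h_k$ that are compatible with the subdivisions. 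Because each box of level $k$ is mapped into its partner, which has small diameter, the sequences $h_k$ and $h_k^{-1}$ are uniformly Cauchy.
\item The limit $h$ is therefore a homeomorphism. It fixes $\partial Q$ because boundary faces are never moved.
\item Finally, $h_*\mu$ and $\nu$ agree on every box of every level. These boxes generate the Borel $\sigma$-algebra modulo the null cutting sets, so $h_*\mu=\nu$.
\end{enumerate}

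\emph{Step 2: the manifold as a collapsed cube.} Fix a triangulation of $M$. Using a maximal tree in the dual graph of the top simplices, glue the $n$-simplices successively along single $(n-1)$-faces to obtain a closed $n$-cell. This gives a quotient map $\varphi\colon Q\to M$ that is a homeomorphism from $\operatorname{int}Q$ onto an open dense set. The complement $Z=\varphi(\partial Q)$ is contained in the $(n-1)$-skeleton. The key requirement is that $\mu(Z)=\nu(Z)=0$ for both measures at once.

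\begin{enumerate}
\item I would arrange this by perturbing the triangulation through an ambient isotopy.
\item Near each open $(n-1)$-simplex, push it across a one-parameter family of disjoint parallel copies inside a collar.
\item Only countably many copies carry positive $\mu$- or $\nu$-mass, so a common null choice exists.
\item Lower skeleta are handled afterwards or absorbed into this process, since they lie in the closures of the moved faces.
\end{enumerate}

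\emph{Step 3: conclusion.} Once $Z$ is null for both measures, pull back to get good probability measures $\mu'=(\varphi|_{\operatorname{int}Q})^{-1}_*\mu$ and $\nu'$ on $Q$, both giving $\partial Q$ zero mass. Full support is preserved because $\varphi$ is open on the interior and $\operatorname{int}Q$ is dense in $Q$.

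Step 1 then gives a homeomorphism $h$ of $Q$ fixing $\partial Q$ pointwise with $h_*\mu'=\nu'$. Since $h$ is the identity on $\partial Q$, it is compatible with the identifications made by $\varphi$. It therefore descends to a homeomorphism $H$ of $M$ with $H_*\mu=\nu$.

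I expect the main obstacle to be Step 2, namely making $Z$ simultaneously null for both measures while keeping the quotient structure; the cube case is a standard but delicate limiting argument. If the collar-pushing argument becomes awkward near lower-dimensional strata, I would fall back on Fathi's approach: first make the codimension-one skeleton null by choosing the triangulation generically in a fixed smooth or PL structure, then invoke the cube theorem simplex by simplex.
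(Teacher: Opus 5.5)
The paper does not prove Theorem~\ref{thm7.2}; it quotes the Oxtoby--Ulam theorem in the form of Fathi and Alpern--Prasad, so your outline has to stand on its own. Your architecture matches theirs: a cube theorem rel $\partial Q$, a map $Q\to M$ injective on $\operatorname{int}Q$ whose boundary image is null for both measures, then descent. The tree gluing of top simplices is a legitimate substitute for Brown's mapping theorem, and Step~3 is correct. However, both substantive steps have genuine gaps.

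\emph{Step 1.} Item~1 is false for $n\ge2$: having no atoms does not make hyperplanes null. For example, take half of Lebesgue measure plus half of length measure on $\{1/2\}\times[0,1]$. Once you restrict to null cuts as in item~2, only the proportions $F(t)$ with $t$ a non-atom of the marginal are attainable, and $\mu$ and $\nu$ need not share any. Take $\mu=\rho\otimes\mathcal L^1$ and $\nu=\rho'\otimes\mathcal L^1$ on $[0,1]^2$, where the atoms of $\rho,\rho'$ accumulate at $0$ and $1$ and the two sets of attainable proportions meet only in $\{0,1\}$. Then no box $[0,1]\times[c,d]$ ever admits a nontrivial matched null vertical cut, so the mesh never drops below $1$.

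More fundamentally, even when all hyperplanes are null, equal masses do not give equal combinatorics. With $\mu$ Lebesgue and halving cuts, $s_L=s_R=1/2$, so the lower-left and upper-right boxes meet at the centre. If $\nu$ yields $s'_L<s'_R$, their partners are disjoint, and no homeomorphism can carry each box onto its partner, since it would preserve intersections. Your scheme is essentially a discrete Knothe--Rosenblatt-type rearrangement: it transports measure but is in general discontinuous across the cutting walls.

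The missing idea is the core of Oxtoby--Ulam. Fix one grid for both measures and make its walls null by a homeomorphism. Then correct the cell masses by homeomorphisms supported near wall points that push mass into an adjacent cell, using connectedness of the adjacency graph. Iterate on finer grids, keeping earlier walls fixed. The combinatorics are then common by construction, and masses are corrected by moving mass rather than by moving cuts.

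\emph{Step 2.} Pushing an open $(n-1)$-simplex through copies rel $\partial\sigma$ never moves the $(n-2)$-skeleton. Yet all of that skeleton lies in $Z$, because no cycle of top simplices around an $(n-2)$-simplex lies in the tree. For $n\ge3$ it can carry mass: the paper's own $\nu_\theta$ in Proposition~\ref{prop7.3} gives mass $\theta$ to the codimension-two sphere $E$, which is a subcomplex of the obvious triangulations. Lying in the closure of null faces does not make a set null, and moving lower skeleta afterwards would undo the null choices already made. If instead your copies are disjoint including their boundaries, the neighbouring faces must move as well. The copies of the whole skeleton are then no longer pairwise disjoint, so the countability argument is unavailable.

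The order must be bottom-up. Vertices are null by non-atomicity. Then, for $k=1,\dots,n-1$, move each open $k$-simplex rel its boundary, one at a time, through uncountably many copies with pairwise disjoint interiors, using homeomorphisms supported in its open star. This is clear for a combinatorial triangulation, where the open star is $\operatorname{int}\sigma\times\R^{n-k}$, but needs an argument in general; this is one reason Fathi and Alpern--Prasad use Brown's theorem instead. Your ``generic triangulation'' fallback likewise needs a concrete mechanism, such as Fubini over vertex perturbations in a smooth chart, since genericity alone says nothing about arbitrary Borel measures.
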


\begin{proposition}\label{prop7.3}
For $n\ge3$ and $0<\theta<1$ there is an orientation-preserving homeomorphism $h\colon\Sn\to\Sn$ with $v_n(h(E))=\theta$.
\end{proposition}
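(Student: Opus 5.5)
The construction will use Theorem~\ref{thm7.2} (Oxtoby--Ulam/Fathi). Fix $n\ge3$ and $0<\theta<1$. The set $E=\{(0,u)\colon|u|=1\}$ is a round $(n-2)$-sphere, so $v_n(E)=0$. The plan is to build a good probability measure $\mu$ on $\Sn$ with $\mu(E)=\theta$. A homeomorphism $h$ carrying $\mu$ to $v_n$, in the sense $h_*\mu=v_n$, then satisfies $v_n(h(E))=\mu(h^{-1}(h(E)))=\mu(E)=\theta$. This is exactly the required conclusion.

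\emph{Step 1: the measure.} Let $\sigma$ be normalized $(n-2)$-dimensional round surface measure on $E$. It is a Borel probability measure on $\Sn$ with no atoms, since $n-2\ge1$, and it is supported on $E$. Put
\[
 \mu=\theta\,\sigma+(1-\theta)\,v_n .
\]
Then $\mu$ is a probability measure without atoms. It gives positive mass to every nonempty open set, because $(1-\theta)v_n$ already does so and $\theta<1$. Hence $\mu$ is good. Moreover $\mu(E)=\theta\cdot1+(1-\theta)\cdot0=\theta$. The measure $v_n$ is good as well.

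\emph{Step 2: the homeomorphism and orientation.} The sphere $\Sn$ is a closed connected triangulable manifold of dimension at least two. Theorem~\ref{thm7.2} therefore gives a homeomorphism $h_0\colon\Sn\to\Sn$ with $(h_0)_*\mu=v_n$, so $v_n(h_0(E))=\theta$. If $h_0$ reverses orientation, replace it by $h=R\circ h_0$, where $R$ is the reflection in a great hypersphere. The reflection $R$ is an isometry, so it preserves $v_n$, and $R\circ h_0$ is orientation-preserving. The equality $v_n(h(E))=v_n(h_0(E))=\theta$ persists.

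\emph{Main point to check.} The only step needing care is that the cited version of Theorem~\ref{thm7.2} applies to measures that give positive mass to a codimension-two subset, with no requirement of absolute continuity. The Oxtoby--Ulam/Fathi hypotheses are only nonatomicity and full support, both verified in Step~1. The boundary-free setting of $\Sn$ requires no extra boundary-measure condition.
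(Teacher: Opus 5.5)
Your proposal is correct and follows the paper's proof: the same good measure $\theta\sigma_E+(1-\theta)v_n$, the same appeal to Theorem~\ref{thm7.2}, and a reflection to fix orientation. The only cosmetic difference is that the paper uses the reflection $(z,u)\mapsto(\bar z,u)$, which also preserves $E$ and $\sigma_E$, whereas your postcomposition with an arbitrary isometric reflection needs only invariance of $v_n$; both work.
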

\begin{proof}
Let $\sigma_E$ be normalized round $(n-2)$-area on $E$, a nonatomic measure since $n-2\ge1$, and $\nu_\theta=(1-\theta)v_n+\theta\sigma_E$, a good measure. Theorem~\ref{thm7.2} gives $h$ with $h_*\nu_\theta=v_n$; composing with the reflection $(z,u)\mapsto(\bar z,u)$, which preserves $v_n$, $\sigma_E$ and $E$, we may assume $h$ orientation-preserving. Then $v_n(h(E))=\nu_\theta(E)=\theta$.
\end{proof}

The set $h(E)$ is ambiently homeomorphic to the standard equator, hence tame and locally flat; its positive volume is a metric, not a topological, property.

\subsection{Proof of Theorem~\ref{thm1.7}(i)}
\begin{proof}
Let $f=f_{m,\theta}=h\circ q_m$ and $A=h(E)$. Post-composition with a homeomorphism preserves openness, discreteness, degree and the branch set, so $\deg f=m$, $B_f=E$ and $f(B_f)=A$ with $v_n(A)=\theta>0$. If $f$ admitted a compatible quasiconvex linearly locally contractible metric for which it is BLD, Corollary~\ref{cor6.3} would give $v_n(A)=0$. Ahlfors regular admissible metrics are excluded through Corollary~\ref{cor2.3}. By Theorem~\ref{thm1.4}, $d_f$ is a compatible Ahlfors $n$-regular geodesic metric, $f$ is $1$-BLD from it, and $(\Sn,d_f)$ is not linearly locally contractible.
\end{proof}

\begin{corollary}\label{cor7.4}
For any metric $\rho'$ on the target bi-Lipschitz to $\rho$, $f_{m,\theta}$ admits no compatible quasiconvex linearly locally contractible source metric for which it is BLD onto $(\Sn,\rho')$.
\end{corollary}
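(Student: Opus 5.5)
The plan is to reduce to the fixed round target by changing metrics on the target side only, and then reuse the positive-volume obstruction of Corollary~\ref{cor6.3} for the target $(\Sn,\rho')$.

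First, suppose $d$ is a compatible quasiconvex, linearly locally contractible source metric for which $f=f_{m,\theta}\colon(\Sn,d)\to(\Sn,\rho')$ is $L$-BLD, and let $\rho\le K\rho'$, $\rho'\le K\rho$. Length is computed from partitions, so $K^{-1}\ell_{\rho}\le\ell_{\rho'}\le K\ell_{\rho}$ holds for every path, with infinite lengths allowed. Inserting this into~\eqref{eq2.3} shows that $f\colon(\Sn,d)\to(\Sn,\rho)$ is $KL$-BLD. The source metric $d$ is unchanged, so it remains compatible, quasiconvex and linearly locally contractible.

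Second, Theorem~\ref{thm1.7}(i) already excludes exactly such a metric for the round target. More concretely, Corollary~\ref{cor6.3} with target $(\Sn,\rho)$ gives $v_n(f(B_f))=0$. This contradicts $v_n(h(E))=\theta>0$ from Proposition~\ref{prop7.3}, since $f(B_f)=h(E)$.

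The argument needs no property of $\rho'$ beyond its bi-Lipschitz comparison with $\rho$. In particular, $\rho'$ need not be Riemannian, which matters because Corollary~\ref{cor6.3} is stated for Riemannian targets. The only point to check is the comparison of path lengths under bi-Lipschitz change, including infinite lengths, and that comparison is immediate from~\eqref{eq2.1}.
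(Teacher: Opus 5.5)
Your proof is correct and is essentially the paper's own argument: a $K$-bi-Lipschitz change of target metric turns an $L$-BLD realization onto $(\Sn,\rho')$ into a $KL$-BLD realization onto $(\Sn,\rho)$ with the same source metric, and then Theorem~\ref{thm1.7}(i) applies (through Corollary~\ref{cor6.3} and $v_n(h(E))=\theta>0$). The only slip is in your opening sentence, which says Corollary~\ref{cor6.3} is reused ``for the target $(\Sn,\rho')$''; as your second paragraph correctly does, it is applied to the round target $\rho$.
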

\begin{proof}
Choose $K\ge1$ with $K^{-1}\rho\le\rho'\le K\rho$. An $L$-BLD realization with target $\rho'$ is then a $KL$-BLD realization with target $\rho$.
\end{proof}

\subsection{Quasisymmetric invariance}
\begin{theorem}\label{thm7.5}
Let $q_0\colon\Sn\to\Sn$ be an orientation-preserving branched cover of degree $m$ such that $(\Sn,d_{q_0})$ has contractibility data $(\lambda_0,r_0)$, and let $\Phi\colon\Sn\to\Sn$ be an orientation-preserving $\eta$-quasisymmetric homeomorphism. Then $(\Sn,d_{\Phi\circ q_0})$ is linearly locally contractible with data depending only on $n$, $\eta$, $m$, $\lambda_0$ and $r_0$. In particular, $\Phi\circ q_0$ admits an admissible metric.
\end{theorem}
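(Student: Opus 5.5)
The plan is to compare clusters of $f=\Phi\circ q_0$ with clusters of $q_0$, and to use Lemma~\ref{lem4.4} with $L=1$ as the device that turns ``contained in a cluster'' into ``contained in a pullback ball'' in both directions. The set-theoretic fact driving this is: for a target domain $V$, the components of $f^{-1}(V)$ are exactly the components of $q_0^{-1}(\Phi^{-1}(V))$. Quasisymmetry then makes $\Phi^{\pm1}$ send round balls to sets squeezed between concentric round balls of comparable radii, with ratio depending only on $\eta$ (ball distortion,~\cite{H2001}).

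Fix $x\in\Sn$ and a small $r$. Put $y=f(x)$ and $z=q_0(x)=\Phi^{-1}(y)$. The argument has four steps.
\begin{enumerate}
\item \emph{Leaving $f$.} Every path from $x$ of $f$-image length less than $r$ stays in the component $C$ of $f^{-1}(B(y,r))$ containing $x$. Hence $B_{d_f}(x,r)\subset C$.
\item \emph{Entering $q_0$.} Let $a=\max\{\rho(z,\Phi^{-1}(y'))\colon \rho(y,y')\le r\}$, so that $\Phi^{-1}(B(y,r))\subset B(z,a)$. Then $C$ lies in the $q_0$-cluster over $B(z,a)$ that contains $x$. By Lemma~\ref{lem4.4} for $q_0$ (which is $1$-BLD from $d_{q_0}$), this gives
\[
 B_{d_f}(x,r)\subset B_{d_{q_0}}(x,3ma).
\]
\item \emph{Contracting.} If $3ma<r_0$, the inclusion is null-homotopic in $B_{d_{q_0}}(x,3m\lambda_0a)$. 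That ball lies in the $q_0$-cluster over $B(z,3m\lambda_0a)$ containing $x$, by Lemma~\ref{lem4.2}.
\item \emph{Returning to $f$.} By quasisymmetry,
\[
 \Phi\bigl(B(z,3m\lambda_0a)\bigr)\subset B(y,s),\qquad s\le H\,r,
\]
where $H$ depends only on $\eta$ and $3m\lambda_0$. Here $a$ is realized by a point $y'$ with $\rho(y,y')\le r$, so the comparison $|z-w|\le 3m\lambda_0|z-\Phi^{-1}(y')|$ yields $\rho(y,\Phi(w))\le\eta(3m\lambda_0)\,r$. The $q_0$-cluster from step (iii) is therefore contained in an $f$-cluster over $B(y,s)$. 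Lemma~\ref{lem4.4} for $f$ and $d_f$ then places it in $B_{d_f}(x,3ms)$.
\end{enumerate}
Altogether, $B_{d_f}(x,r)$ contracts in $B_{d_f}(x,\lambda r)$ with $\lambda=3m\,\eta(3m\lambda_0)$, up to harmless constants.

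Lemma~\ref{lem4.4} requires target radii below $r_N$, and Lemma~\ref{lem4.2} uses lifts of geodesics. So in steps (ii) and (iv) I restrict to $s$ and $a$ below $r_N=\pi/2$, say. I also replace the diameter bound of Lemma~\ref{lem4.4} by the stated bound on the distance to the fibre; this is what produces a ball centred at $x$.

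The main obstacle is the scale threshold. Step (iii) needs $3ma<r_0$, and step (iv) needs $s<r_N$. Smallness of $a$ in terms of $r$ is a modulus-of-continuity statement for $\Phi^{-1}$. That modulus is not controlled by $\eta$ alone, since a Möbius map is $\eta$-quasisymmetric and concentrates the sphere. My first attempt will be a normalization. The modulus of continuity of an $\eta$-quasisymmetric self-map of $\Sn$ is controlled once $\Phi$ does not contract the whole sphere, for instance when the images of three fixed well-separated points stay uniformly separated (Tukia--Väisälä). For a general $\Phi$ I would compose with Möbius maps $M_1,M_2$ to achieve this normalization. I would then handle the resulting change of target by noting that
\[
 \Phi\circ q_0=M_2^{-1}\circ\Psi\circ(M_1^{-1}\circ q_0).
\]
This reduction still requires checking that $M_1^{-1}\circ q_0$ has contractibility data controlled by $(\lambda_0,r_0)$. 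If that fails, I will allow the threshold $r_*$ to depend additionally on $\Phi$, while keeping $\lambda$ depending only on $n,\eta,m,\lambda_0$. The threshold is never needed uniformly in the final ``in particular''. Admissibility then follows from Theorem~\ref{thm1.4}(iii), since $d_f$ is Ahlfors regular and $1$-BLD by Theorem~\ref{thm1.4}(i).
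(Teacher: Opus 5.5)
Your steps (i)--(iv) are correct and are essentially the paper's proof. The paper likewise places $B_{d_f}(x,r)$ inside a $q_0$-cluster over a round ball of radius comparable to $u(r)=\max_{\rho(y,y')\le r}\rho(z,\Phi^{-1}(y'))$ (your $a$), contracts in $d_{q_0}$, and returns to an $f$-cluster, using Lemma~\ref{lem4.4} once for each metric. The paper phrases the return step through $\Phi^{-1}$, with a lower function $v$ and a factor $\kappa$ satisfying $5m\lambda_0\eta'(1/\kappa)\le1$. You instead apply $\eta$ directly at a point realizing $a$, which gives the same kind of constant, $\lambda\approx3m\,\eta(3m\lambda_0)$.

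\textbf{The gap is the scale threshold.} The statement says that the full datum $(\lambda,r_*)$, threshold included, depends only on $n,\eta,m,\lambda_0,r_0$. Your fallback lets $r_*$ depend on $\Phi$, which proves less than this. Your normalization route leaves its key step unverified: controlling the data of $M_1^{-1}\circ q_0$ is itself an instance of the theorem, and there is no uniformity unless $M_1$ stays in a compact family. Neither detour is needed, because the modulus of continuity of $\Phi^{-1}$ \emph{is} controlled by $\eta$ alone. Given $\rho(y,y')\le r$, let $y^*$ be the antipode of $y$, so that $\rho(y,y')\le(r/\pi)\rho(y,y^*)$. Since $\Phi^{-1}$ is $\eta'$-quasisymmetric with $\eta'(t)=1/\eta^{-1}(1/t)$,
\[
 \rho\bigl(z,\Phi^{-1}(y')\bigr)\le\eta'(r/\pi)\,\rho\bigl(z,\Phi^{-1}(y^*)\bigr)\le\pi\,\eta'(r/\pi).
\]
Thus $a\le\pi\eta'(r/\pi)$, uniformly in $x$ and in $\Phi$; this is exactly the paper's bound $u(r)\le\pi\eta'(r/\pi)$. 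Choose $r_1$ with $\pi\eta'(r_1/\pi)<\min\{r_N,r_0/(3m)\}$ and $\eta(3m\lambda_0)r_1<r_N$. Then all of your smallness requirements hold for $r<r_1$, with $r_1$ depending only on the listed data.

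Your M\"obius objection fails at fixed $\eta$, because M\"obius maps that concentrate the sphere are not uniformly quasisymmetric. For $\zeta\mapsto k\zeta$, a point of the unit circle is equidistant from $0$ and $\infty$. Its image lies at distance about $\pi$ from $0$ and about $2/k$ from $\infty$, which forces $\eta(1)\gtrsim k$. Since $\Phi^{-1}$ is defined on the whole sphere, an antipodal comparison point is always available, and that alone rules out concentration for an $\eta$-quasisymmetric self-map of $\Sn$.
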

\begin{proof}
Put $D_0=d_{q_0}$, $D_1=d_{\Phi\circ q_0}$, and $f_1=\Phi\circ q_0$. Both maps are $1$-BLD from the respective metrics. Fix $x\in\Sn$, put $a=f_1(x)$ and $b=q_0(x)$, and write $\Psi=\Phi^{-1}$. Its quasisymmetry function $\eta'$ depends only on $\eta$~\cite{H2001}. Define
\[
 u(r)=\max_{\rho(a,y)\le r}\rho(b,\Psi(y)),\qquad
 v(r)=\min_{\rho(a,y)\ge r}\rho(b,\Psi(y)).
\]
For small positive $r$, both are positive, and
\[
 B(b,v(r))\subset\Psi(B(a,r))\subset B(b,2u(r)).
\]
For $\kappa\ge1$ with $\kappa r<\pi$, quasisymmetry gives
\[
 u(r)\le\eta'(1/\kappa)v(\kappa r),\qquad
 u(r)\le\pi\eta'(r/\pi).
\]
The second estimate follows by comparing each $y\in\overline B(a,r)$ with the antipode of $a$. Choose $\kappa$ so that $5m\lambda_0\eta'(1/\kappa)\le1$. Choose $r_1>0$, depending only on the stated data, so that for $r<r_1$,
\[
 2u(r)<r_N,\qquad 5mu(r)<r_0,\qquad \kappa r<r_N.
\]
The preceding bound on $u(r)$ makes this choice uniform in $x$ and in $\Phi$ with the given quasisymmetry function.

Every point of $B_{D_1}(x,r)$ can be joined to $x$ by a path whose $f_1$-image has length less than $r$. Hence this source ball lies in the component of $q_0^{-1}(\Psi(B(a,r)))$ containing $x$, and thus in the component $C$ of $q_0^{-1}(B(b,2u(r)))$ containing $x$. Lemma~\ref{lem4.4} gives $\diam_{D_0}C\le4mu(r)$. Since $x\in C$, we have
\[
 C\subset\overline B_{D_0}(x,4mu(r))\subset B_{D_0}(x,5mu(r)).
\]
Contract this last open ball inside $B_{D_0}(x,5m\lambda_0u(r))$.

This larger $D_0$-ball is path connected, contains $x$, and its $q_0$-image lies in
\[
 B(b,5m\lambda_0u(r))\subset B(b,v(\kappa r))
 \subset\Psi(B(a,\kappa r)).
\]
The whole contraction therefore remains in the component of $f_1^{-1}(B(a,\kappa r))$ containing $x$. A second application of Lemma~\ref{lem4.4} bounds its $D_1$-diameter by $2m\kappa r$ and therefore places it inside
\[
 \overline B_{D_1}(x,2m\kappa r)
 \subset B_{D_1}(x,3m\kappa r).
\]
We have proved contractibility data $(3m\kappa,r_1)$ for $D_1$. Theorem~\ref{thm1.4} now gives admissibility.
\end{proof}

\begin{corollary}\label{cor7.6}
For every orientation-preserving quasiconformal $\Phi\colon\Sn\to\Sn$ and $m\ge2$ the cover $\Phi\circ q_m$ admits an admissible metric, and its branch-value set is $\Phi(E)$. For $n=3$, every planar quasicircle $\Gamma\subset\R^2\subset\mathbb S^3$ arises this way, so admissible branching along snowflake curves of any Hausdorff dimension in $(1,2)$ occurs.
\end{corollary}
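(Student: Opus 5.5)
The first assertion of Corollary~\ref{cor7.6} follows directly from earlier results. An orientation-preserving quasiconformal $\Phi\colon\Sn\to\Sn$ is $\eta$-quasisymmetric for some $\eta$, as recalled in Section~\ref{sec2}. Lemma~\ref{lem7.1} shows that $q_m$ is orientation-preserving of degree $m$ and that $(\Sn,d_{q_m})$ is linearly locally contractible. Theorem~\ref{thm7.5} therefore applies with $q_0=q_m$, so $\Phi\circ q_m$ admits an admissible metric. Postcomposition with a homeomorphism does not change the branch set. Hence $B_{\Phi\circ q_m}=B_{q_m}=E$, and the branch-value set is $\Phi(q_m(E))=\Phi(E)$, using $q_m(E)=E$ from Lemma~\ref{lem7.1}.

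For the second assertion, take $n=3$ and identify $\R^2\subset\R^3\cong\mathbb S^3\setminus\{p_\infty\}$ through oriented stereographic coordinates, chosen as in Section~\ref{sec3} so that $E$ becomes the unit circle $\mathbb S^1\subset\R^2$. Let $\Gamma\subset\R^2$ be a quasicircle. By Ahlfors' theorem~\cite{A1964} there is a quasiconformal map $g\colon\R^2\to\R^2$ with $g(\mathbb S^1)=\Gamma$. After composing with a reflection of the plane that preserves $\mathbb S^1$, we may assume $g$ preserves orientation. By the Tukia--V\"ais\"al\"a extension theorem~\cite{TV1982}, $g$ extends to a quasiconformal map $G\colon\R^3\to\R^3$, and $G$ fixes $\infty$. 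Orientation is preserved because the extension restricts to $g$ on $\R^2$ and $G$ preserves each half-space up to a swap determined by the construction; if $G$ reverses orientation, compose with the reflection $x_3\mapsto-x_3$, which fixes $\R^2$ pointwise. Conjugation by stereographic projection, which is conformal (M\"obius), gives a quasiconformal $\Phi$ of $\mathbb S^3$ with $\Phi(E)=\psi(\Gamma)$. The preceding paragraph then produces the admissible cover $\Phi\circ q_m$ with branch-value set $\Gamma$.

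It remains to show that every Hausdorff dimension in $(1,2)$ occurs. The von Koch-type snowflake curves with adjustable angle are planar quasicircles, since they satisfy Ahlfors' three-point condition. Their dimensions fill $(1,2)$ as the angle varies; equivalently, one may take the snowflakes $(\mathbb S^1,|\cdot|^{1/s})$ embedded bi-Lipschitz in the plane. Stereographic projection is bi-Lipschitz on compact sets, so it preserves Hausdorff dimension. In particular the branch-value set $\Phi(E)$ in Theorem~\ref{thm1.7}(ii) can have dimension larger than one.

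The step I expect to be the main obstacle is the orientation bookkeeping for the extension, together with citing the correct form of the extension theorem. I would invoke Tukia--V\"ais\"al\"a for quasiconformal maps $\R^2\to\R^2$ extending to $\R^3$, and handle orientation by composing with reflections that fix the relevant sets.
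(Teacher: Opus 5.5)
Your proposal is correct and follows the paper's proof. Quasiconformality gives quasisymmetry, and Lemma~\ref{lem7.1} with Theorem~\ref{thm7.5} gives admissibility of $\Phi\circ q_m$ with branch values $\Phi(E)$; for $n=3$, a planar quasiconformal map carrying a round circle onto $\Gamma$ is extended to $\R^3$, compactified, and matched to $E$ by a M\"obius (stereographic) change of coordinates. The remaining differences are cosmetic: you use Tukia--V\"ais\"al\"a and the reflection $x_3\mapsto-x_3$ for orientation instead of Ahlfors' half-space extension plus reflection, and explicit Koch curves instead of citing Gehring--V\"ais\"al\"a (note that \cite{A1964} is the extension theorem, not the source of the circle-image description of quasicircles, which here is essentially the definition).
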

\begin{proof}
Quasiconformal self-maps of $\Sn$ are quasisymmetric~\cite{H2001,V1971}; apply Lemma~\ref{lem7.1} and Theorem~\ref{thm7.5}. A planar quasicircle is the image of a round circle under an orientation-preserving quasiconformal self-map of $\R^2$. By~\cite[Theorem~2]{A1964}, this map extends quasiconformally to the upper half-space in $\R^3$; reflection across the boundary plane gives a quasiconformal self-map of all of $\R^3$. Its one-point compactification, composed with an orientation-preserving round M\"obius map carrying $E$ to that circle, gives the required map of $\mathbb S^3$. The general extension theorem~\cite{TV1982} also includes the case $\R^2\to\R^3$. Finally, snowflake quasicircles of every Hausdorff dimension in $(1,2)$ exist~\cite{GV1973}.
\end{proof}

\begin{remark}\label{rem7.7}
The homeomorphism $h$ of Proposition~\ref{prop7.3} carries the null set $E$ onto a set of positive volume and is therefore not quasisymmetric. Theorem~\ref{thm7.5} shows that post-composition with a quasisymmetric map preserves admissibility, while Theorem~\ref{thm1.7}(i) shows that post-composition with a homeomorphism does not; this does not assert that every non-quasisymmetric target change destroys admissibility. Also, admissibility of $\Phi\circ q_m$ for quasiconformal $\Phi$ shows that no condition of local bi-Lipschitz flatness on the branch-value set is necessary: the necessary condition of Corollary~\ref{cor6.3} (uniform porosity) allows branch-value sets of dimension greater than the topological branching dimension, as the dimension-three examples demonstrate.
\end{remark}

\subsection{The planar case}
\begin{proposition}\label{prop7.8}
Every orientation-preserving branched cover $f\colon\mathbb S^2\to\mathbb S^2$ admits an admissible metric, namely $d_f$.
\end{proposition}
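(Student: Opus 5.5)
By Theorem~\ref{thm1.4}(iii), with $M=N=\mathbb S^2$, it suffices to show that $(\mathbb S^2,d_f)$ is linearly locally contractible. The plan is to use the two-dimensional structure of clusters. Fix $x\in\mathbb S^2$ and $0<r<r_*$, with $r_*$ small compared with $r_N/m$.

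\textbf{Step 1: small balls sit inside one cluster.} Put $y=f(x)$. By the path-lifting argument already used in Theorem~\ref{thm7.5}, every point of $B_{d_f}(x,r)$ is joined to $x$ by a path whose image has length less than $r$. Hence $B_{d_f}(x,r)$ lies in the cluster $C$ over $B(y,r)$ that contains $x$. By Lemma~\ref{lem4.4} with $L=1$, $\diam_{d_f}C\le 2mr$, so
\[
 C\subset B_{d_f}(x,3mr).
\]
It therefore suffices to show that every cluster $C$ over a small round disk is contractible, i.e.\ an open topological disk. Then the inclusion $B_{d_f}(x,r)\hookrightarrow C$ is null-homotopic inside $C\subset B_{d_f}(x,3mr)$, which gives contractibility data $(3m,r_*)$.

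\textbf{Step 2: clusters are disks.} This is the main point. Here $C$ is a normal domain, and $f|_C\colon C\to B(y,r)$ is a proper branched cover of degree $d_C$ between surfaces. In dimension two, Sto\"ilow's theorem makes $f$ locally conjugate to $z\mapsto z^k$, so the branch set is discrete and hence finite in $\mathbb S^2$. By Riemann--Hurwitz,
\[
 \chi(C)=d_C\,\chi(\text{disk})-\sum_{x'\in C}\bigl(i(x',f)-1\bigr).
\]
Choose $r_*$ below half the minimal round distance between the finitely many branch values. Then $B(y,r)$ contains at most one branch value, and we may take it to be $y$ if it is present; otherwise $C$ maps homeomorphically and we are done.

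Since $C$ is connected and every point of $C\cap f^{-1}(y)$ is a branch preimage, $f|_C$ restricted to the punctured disk is a connected cover of the punctured disk. Such a cover has cyclic monodromy, so it is itself a punctured disk and $f|_C$ is equivalent to $z\mapsto z^{d_C}$. In particular $C$ is an open disk.

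If one prefers to avoid Riemann--Hurwitz, the same conclusion follows from the classification of finite connected covers of $D\setminus\{0\}$, which are all of the form $z\mapsto z^k$, together with the extension over the puncture argued as in Lemma~\ref{lem3.2}.

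\textbf{Main obstacle.} The difficulty is uniformity: the scale $r_*$ depends on $f$ through the separation of its branch values. This is harmless, because linear local contractibility in Definition~\ref{def2.1} allows the scale threshold to depend on the space. It is also consistent with Theorem~\ref{thm1.8}, whose uniform statement is different. Finally, one should check that the contraction of a disk is continuous in the topology of $d_f$; this holds because $d_f$ is compatible by Proposition~\ref{prop4.1}.
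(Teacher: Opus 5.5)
Your argument is correct, but it takes a different route from the paper's proof.

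\textbf{Comparison of the two routes.} The paper writes $f=R\circ\varphi$ with $R$ rational. It uses the holomorphic normal form $R(\zeta)=\zeta^k$ to make $d_R$ locally bi-Lipschitz to a Euclidean cone of angle $2\pi k\ge2\pi$. It then contracts balls geodesically using the $\mathrm{CAT}(0)$ property of such cones, and patches finitely many charts by compactness and bi-Lipschitz invariance.

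You instead use the sandwich $B_{d_f}(x,r)\subset C\subset B_{d_f}(x,3mr)$ from Lemma~\ref{lem4.4}, exactly as in the proof of Theorem~\ref{thm7.5}, and contract the whole cluster. The only remaining input is then the topological fact that a cluster over a disk containing at most one branch value is an open disk. This is the mechanism of Lemma~\ref{lem8.4} and Proposition~\ref{prop8.5} without the flat comparison metric; the paper needs that metric there only to make $\lambda$ independent of $m$.

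What each route buys:
\begin{enumerate}
\item Your route gives the explicit data $(3m,r_*)$, with $r_*$ controlled by $r_N$ and the separation of the finitely many branch values. It uses complex analysis only to know that $B_f$ is finite.
\item The paper's route exhibits the local cone structure of $d_f$ and contracts inside balls rather than whole clusters. Its constants, however, depend on the non-explicit bi-Lipschitz constants of the holomorphic charts.
\end{enumerate}

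\textbf{Corrections needed in Step~2.} Three statements should be fixed, though the argument survives.
\begin{enumerate}
\item You cannot normalize the branch value $b\in B(y,r)$ to be the center, since $y=f(x)$ is fixed by $x$. This is also unnecessary: $B(y,r)\setminus\{b\}$ is a punctured disk for every $b\in B(y,r)$.
\item Points of $C\cap f^{-1}(b)$ need not be branch points; if $d_C=1$, the unique such point is regular. Nor is this what makes the covering connected. The set $C\setminus f^{-1}(b)$ is connected because it is a connected surface minus a finite set. The restriction of $f$ to it is a proper local homeomorphism onto $B(y,r)\setminus\{b\}$, hence a covering.
\item The Riemann--Hurwitz display by itself only gives $\chi(C)=\#(C\cap f^{-1}(b))$. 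What actually concludes is the alternative you offer: the classification of finite connected covers of the punctured disk, together with the extension over the puncture as in Lemma~\ref{lem3.2}.
\end{enumerate}
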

\begin{proof}
By Sto\"ilow's theorem~\cite{LP2020,St1928} there are a homeomorphism $\varphi\colon\mathbb S^2\to\widehat\C$ and a nonconstant rational map $R$ with $f=R\circ\varphi$. Since $\varphi$ is a homeomorphism, $(\mathbb S^2,d_f)$ is isometric to $(\widehat\C,d_R)$. Near a point $w$ with $i(w,R)=k$ there are holomorphic coordinates in which $R(\zeta)=\zeta^k$, and the round metric is bi-Lipschitz to $|d\xi|$ on a target disk. On a small disk around $w$, $d_R$ is therefore comparable to the length metric of $k|\zeta|^{k-1}|d\zeta|$, the Euclidean cone of angle $2\pi k$: paths leaving the disk have image length bounded below, and paths inside have image length comparable to their cone length. A Euclidean cone of angle at least $2\pi$ is $\mathrm{CAT}(0)$~\cite[Theorem~II.3.14]{BH1999}, so its balls are convex and contractible in themselves. Linear local contractibility is local and bi-Lipschitz invariant, so finitely many such charts and compactness give contractibility data for $d_R$. Ahlfors regularity is Theorem~\ref{thm4.5}, and $R$ is $1$-BLD from $d_R$ by Proposition~\ref{prop4.1}.
\end{proof}

\begin{proof}[Proof of Theorem~\textup{\ref{thm1.7}}]
(i) was proved above, (ii) is Theorem~\ref{thm7.5} with Corollary~\ref{cor7.6}, and (iii) is Proposition~\ref{prop7.8}.
\end{proof}

\section{The dependence on the degree is necessary}\label{sec8}

\subsection{Local indices are unbounded}
\begin{example}\label{ex8.1}
For $w_k(z)=z^k$ on $\widehat\C$, $(\widehat\C,d_{w_k})$ is the spherical cone surface with two cone points of angle $2\pi k$. It is locally $\mathrm{CAT}(1)$ by the cone criterion. There is no nonconstant closed local geodesic of length less than $2\pi$: one avoiding the two cone points projects to a closed round geodesic; a segment between the two cone points has length at least $\pi$; and a geodesic segment from one cone point back to itself with no cone point in its interior would project to a round geodesic loop of length at least $2\pi$. The globalization criterion~\cite[Chapters~II.3--II.4]{BH1999} therefore gives $\mathrm{CAT}(1)$. Balls of radius less than $\pi/2$ are convex and contract to their centers along their unique geodesics. Thus $(\widehat\C,d_{w_k})$ has contractibility data $(1,\pi/2)$ while $i(0,w_k)=k$, so no bound $i(x,f)\le C(n,L,\lambda)$ holds.
\end{example}

\subsection{Path-tree covers with prescribed branch values}
We use the Riemann sphere $\widehat\C$ with the round metric $\rho$, normalized to have injectivity radius $\pi$, with $0$ and $\infty$ antipodal, so that $|z|<1$ is the open southern hemisphere. For $a\ne0,\infty$ let $\mathcal R_a=\{ta\colon t\ge1\}\cup\{\infty\}$, a geodesic arc from $a$ to $\infty$.

\begin{construction}\label{con8.2}
Let $A=\{a_1,\dots,a_d\}\subset\{0<|z|<1\}$ have pairwise distinct arguments, ordered so that $\arg a_1<\dots<\arg a_d$. Take $d+1$ copies $\Sigma_1,\dots,\Sigma_{d+1}$ of $\widehat\C$; for each $k$ cut $\Sigma_k$ and $\Sigma_{k+1}$ along $\mathcal R_{a_k}$ and reglue crosswise by the identity of the arc. Let $X_A$ be the result and $f_A\colon X_A\to\widehat\C$ the projection.
\end{construction}

\begin{lemma}\label{lem8.3}
$X_A\cong\mathbb S^2$, and $f_A$ is an orientation-preserving branched cover of degree $d+1$ with one branch point of index $2$ over each $a_k$, one of index $d+1$ over $\infty$, and no others. The pullback metric $d_A=d_{f_A}$ is a spherical cone surface with cone angles $4\pi$ over the $a_k$ and $2\pi(d+1)$ over $\infty$, and $f_A$ is $1$-BLD from it.
\end{lemma}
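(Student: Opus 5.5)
The proof has three parts: the monodromy of the slit construction, Riemann--Hurwitz to identify the surface, and a local computation of the pullback metric.

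\emph{Monodromy.} Away from the arcs $\mathcal R_{a_k}$, the projection $f_A$ is the identity on each sheet. So $f_A$ is a local isometry, hence an orientation-preserving local homeomorphism, on $X_A\setminus f_A^{-1}(A\cup\{\infty\})$. Interior points of a slit are also regular, because the crosswise regluing is a local homeomorphism there. The local monodromy is then as follows.
\begin{itemize}
\item A small loop around $a_k$ crosses only the slit $\mathcal R_{a_k}$, so it acts by the transposition $(k\;k{+}1)$.
\item Near $a_k$, the two glued sheets form a single disk on which $f_A$ is the double cover branched at $a_k$, which is the standard model $z\mapsto z^2$. This gives one branch point of index $2$ over $a_k$, and the remaining $d-1$ sheets are unbranched there.
\item A small loop around $\infty$ crosses all slits. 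The arguments $\arg a_k$ are distinct and increase, so the slits are met in cyclic order near $\infty$. The monodromy is therefore the product $(1\,2)(2\,3)\cdots(d\;d{+}1)$, which is a $(d+1)$-cycle.
\end{itemize}
The subgroup generated by these transpositions is transitive, so $X_A$ is connected. Hence there is a single point over $\infty$, of index $d+1$. Compactness and openness follow from the finitely many local models, so $f_A$ is a branched cover of degree $d+1$.

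\emph{Topology.} By Riemann--Hurwitz, $\chi(X_A)=2(d+1)-\sum_x(i(x)-1)=2d+2-d-d=2$. Since $X_A$ is a closed oriented surface, $X_A\cong\mathbb S^2$.

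\emph{Metric.} Proposition~\ref{prop4.1} shows that $d_A$ is geodesic and that $f_A$ is $1$-BLD from $(X_A,d_A)$. For the cone structure I work locally.
\begin{itemize}
\item Near a regular point, $f_A$ restricts to a homeomorphism of a small normal neighborhood onto a small convex ball. Paths leaving the neighborhood have image length bounded below, so locally $d_A$ is the round metric.
\item Near a branch point of index $k$, choose a normal neighborhood over a small round ball $B(y,r)$. It is the $k$-fold cyclic branched cover of $B(y,r)$, and its induced length metric is the spherical cone of angle $2\pi k$. This is the same argument as in Proposition~\ref{prop7.8}, with the round metric used exactly rather than up to bi-Lipschitz comparison.
\item Short paths stay inside this neighborhood, because image length bounds the distance from the center. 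Hence $d_A$ agrees with the cone metric on a smaller ball.
\end{itemize}
Applying this with $k=2$ at the $a_k$ and $k=d+1$ at $\infty$ gives cone angles $4\pi$ and $2\pi(d+1)$.

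The main care point is the cyclic order at $\infty$. Near $\infty$ the slits $\mathcal R_{a_k}$ are disjoint rays, so the conjugation conventions for the monodromy must be fixed in order for the product to be a $(d+1)$-cycle rather than another product of transpositions. However, any product of the $d$ generating transpositions $(k\;k{+}1)$, taken in any order, is a $(d+1)$-cycle. So the index $d+1$ at $\infty$ holds regardless of that convention.
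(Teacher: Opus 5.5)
Your proposal is correct and follows the paper's proof: local monodromies $(k\;k{+}1)$ and a $(d+1)$-cycle at $\infty$, transitivity for connectedness, Riemann--Hurwitz giving $\chi(X_A)=2$, and Proposition~\ref{prop4.1} for the metric. Your explicit local identification of $d_A$ with spherical cones of angle $2\pi k$ simply spells out the paper's remark that the gluings are isometries. One small wording point: the point over $\infty$ is unique because its local monodromy is a single $(d+1)$-cycle, which you already proved, not because $X_A$ is connected, so the ``Hence'' after the transitivity sentence should refer back to the previous bullet.
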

\begin{proof}
The monodromy around $a_k$ is $\sigma_k=(k\ k{+}1)$. The monodromy around $\infty$ is, up to conjugation and inversion, the product $\sigma_1\cdots\sigma_d$, and hence is a $(d+1)$-cycle. The transpositions act transitively, so $X_A$ is connected. Counting the lifted cells of a target triangulation whose vertices contain all branch values gives the Riemann--Hurwitz identity $\chi(X_A)=2(d+1)-d-d=2$. Thus the closed connected oriented surface $X_A$ is a sphere. The metric statements follow from Proposition~\ref{prop4.1} since the gluings are isometries.
\end{proof}

\begin{lemma}\label{lem8.4}
Let $B=B_\rho(y,r)$, $0<r<\pi/4$, with $\partial B\cap(A\cup\{\infty\})=\emptyset$. Every component of $f_A^{-1}(B)$ is an open disk, and its closure is a compact disk with no branch points on its boundary.
\end{lemma}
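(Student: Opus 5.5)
Let $C$ be a component of $f_A^{-1}(B)$. By Definition~\ref{def4.3}, $C$ is a normal domain mapped onto $B$ with some degree $d_C$, and $f_A|_C\colon C\to B$ is a proper branched cover between open surfaces. Since $0<r<\pi/4$, the target ball $B$ is a round open disk, strongly convex, and $\overline B$ is a closed disk. I would compute the topology of $C$ with Riemann--Hurwitz, as in Lemma~\ref{lem8.3}, and then control the boundary separately.

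\textbf{Step 1 ($C$ is a disk).} Use that $\overline B$ is a compact disk and that $\partial B$ contains no branch values. Then $f_A$ restricts to a genuine covering over a collar neighborhood of $\partial B$. So $\overline C=C\cup\partial C$ is a compact surface with boundary, and $\partial C$ is a finite union of circles, each covering $\partial B$. Triangulate $\overline B$ so that the branch values lying in $B$ are vertices, and lift it. This gives
\[
 \chi(\overline C)=d_C\,\chi(\overline B)-\sum_{x\in C}(i(x,f_A)-1)=d_C-\sum_{x\in C}(i(x,f_A)-1).
\]
\emph{Case $\infty\notin B$.} Every branch point in $C$ has index $2$, and the monodromy of $C$ over $B$ is generated by the transpositions $\sigma_k$ for those $a_k\in B$. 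These act on the $d_C$ sheets of $C$ transitively. Form the graph on the sheets with an edge for each such transposition. This graph is a subgraph of the path $1-2-\cdots-(d+1)$, since $\sigma_k=(k\ k{+}1)$. It is connected, hence a subpath with $d_C$ vertices and exactly $d_C-1$ edges, each edge coming from a distinct $a_k$. Each such $a_k$ contributes exactly one index-$2$ point to $C$. Therefore $\chi(\overline C)=d_C-(d_C-1)=1$. A connected compact orientable surface with nonempty boundary and $\chi=1$ is a disk.

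\emph{Case $\infty\in B$.} Here the monodromy includes the $(d+1)$-cycle, so $C$ is the whole preimage and $d_C=d+1$. I still need to know which $a_k$ lie in $B$. Because $r<\pi/4$ and all $a_k$ lie in the open southern hemisphere, i.e.\ at distance more than $\pi/2$ from $\infty$, no $a_k$ lies in $B$. So $\chi(\overline C)=(d+1)-d=1$, and again $\overline C$ is a disk.

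\textbf{Step 2 (boundary).} Show that the closure of $C$ in $X_A$ is the compact surface $\overline C$ described above, with no branch points on $\partial C$. This holds because $f_A(\partial C)=\partial B$ contains no branch values. The interior of a compact disk is an open disk, which settles $C$.

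\textbf{Main obstacle.} The delicate point is the monodromy count in the first case. It needs care in two places. First, the monodromy of the restricted cover over $B\setminus A$ is generated by loops around the $a_k\in B$, computed with the slit arcs $\mathcal R_{a_k}$, which may exit $B$. Second, one must check that the count of index-$2$ points inside $C$ equals the number of edges of the connected subpath. I would handle both by identifying the sheets of $C$ with an interval of indices $\{j,\dots,j+d_C-1\}$. Inside $B$, the slits are disjoint arcs, so the local gluing is exactly sheet $k$ to sheet $k+1$ across $\mathcal R_{a_k}\cap B$.
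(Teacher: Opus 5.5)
\textbf{Gap in Step~1, case $\infty\notin B$.} Your count $b_C=d_C-1$ (with $b_C$ the number of branch points in $C$) rests on the claim that, in one sheet labelling over $B$, the local monodromies of $C$ are the adjacent transpositions $\sigma_k=(k\ k{+}1)$ for $a_k\in B$. That would make the transposition graph a subpath of $1-2-\cdots-(d+1)$ and the sheets of $C$ an interval. The claim overlooks the slits $\mathcal R_{a_j}$ with $a_j\notin B$ that cross $B$ as full chords, which happens whenever $B$ lies radially beyond $a_j$. Crossing such a chord exchanges sheets $j$ and $j+1$. So the monodromy around $a_k$ from a fixed base point is $\tau\sigma_k\tau^{-1}$, with $\tau$ a product of such $\sigma_j$, and it need not be adjacent.

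For example, take $d=3$, $a_2=1/10$, $a_1=\tfrac12e^{-i\epsilon}$, $a_3=\tfrac12e^{i\epsilon}$ with $\epsilon>0$ small, and let $B$ be the Euclidean disk of radius $1/5$ about $1/2$. Then $\mathcal R_{a_2}\cap B$ is a chord separating $a_1$ from $a_3$. From a base point on the side of $a_1$, the local monodromies are $(1\,2)$ and $(2\,3)(3\,4)(2\,3)=(2\,4)$. From the other side they are $(1\,3)$ and $(3\,4)$. The degree-three component therefore has sheets $\{1,2,4\}$ or $\{1,3,4\}$, not an interval, and the graph is not a subgraph of the path. The component happens to be a disk here, but your argument does not show it in general. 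Transitivity and Riemann--Hurwitz alone give only $\chi(\overline C)=d_C-b_C\le1$. That does not exclude a cycle in the transposition graph, which would make $\overline C$ an annulus or a planar surface with more boundary circles.

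\textbf{What is missing} is a global input that rules out such cycles. The paper supplies it analytically. Sto\"ilow factorization gives $f_A=P\circ\varphi$ with $P$ a polynomial, because the fiber over $\infty$ is a single point of index $d+1$. The maximum principle for $P-b$ on Jordan domains then shows that each component of $P^{-1}(B)$ is simply connected, and the unbranched boundary makes its closure a compact disk.

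A topological substitute also fits your framework. Lifting paths in the closed disk $\widehat\C\setminus B$ to the unique point over $\infty$ shows that $f_A^{-1}(\widehat\C\setminus B)$ is connected. Every other component $C'$ has closure meeting this set, so $X_A\setminus C$ is connected. Alexander duality in the sphere $X_A$ then gives $H_1(C)=0$, hence $\chi(\overline C)=1$, and your classification step finishes. Your treatment of the case $\infty\in B$ and of the boundary is correct and agrees with the paper.
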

\begin{proof}
First suppose $\infty\notin B$. Since $\infty\notin\partial B$, the round ball $B$ is a bounded Euclidean disk in the stereographic coordinate. By Sto\"ilow factorization and uniformization~\cite{LP2020,St1928}, there are a homeomorphism $\varphi\colon X_A\to\widehat\C$ and a rational map $P$ with $f_A=P\circ\varphi$. The unique point over $\infty$ has index $d+1$. Choosing the coordinate $\varphi$ to send that point to $\infty$, we see that $P$ has only one pole, at $\infty$, and hence is a polynomial.

Write $B=\{\zeta:|\zeta-b|<s\}$ and let $V$ be a component of $P^{-1}(B)$. It is bounded. If $\gamma\subset V$ is a Jordan curve, the maximum principle applied to $P-b$ on the bounded Jordan domain enclosed by $\gamma$ gives $|P-b|<s$ throughout that domain. The domain and $\gamma$ belong to the same component of $P^{-1}(B)$, so the domain is contained in $V$. The Jordan-curve criterion for planar domains implies that $V$ is simply connected. Moreover, the circle $\partial B$ contains no critical value. Its inverse image is therefore a finite union of smooth circles. Thus $\overline V$ is a compact surface with boundary; simple connectedness makes it a disk. Transport this conclusion by $\varphi^{-1}$.

If $\infty\in B$, then $B\cap A=\emptyset$. Indeed $\rho(\infty,a)>\pi/2$ for $|a|<1$, whereas two points of $B$ have distance less than $2r<\pi/2$. The fiber over $\infty$ is a singleton, so lifting paths to $\infty$ shows that $f_A^{-1}(B)$ is connected. Off that point it is the connected degree-$(d+1)$ cover of a punctured disk. Filling the puncture gives a disk, and the unbranched boundary again gives a compact disk closure. Notice that the argument here uses $2r<\pi/2$, not merely $r<\pi/2$.
\end{proof}

\begin{proposition}\label{prop8.5}
For every $A$ as in Construction~\ref{con8.2}, $(X_A,d_A)$ has contractibility data $(5,1/10)$.
\end{proposition}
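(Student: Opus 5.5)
The plan is to trap each small source ball inside a single component of the inverse image of a slightly larger target ball, and then contract that component inside itself. Fix $x\in X_A$ and $0<r<1/10$, and set $y=f_A(x)$. Since $f_A$ is $1$-BLD and $d_A$ is a length metric, every point of $B_{d_A}(x,r)$ is joined to $x$ by a path whose image has length less than $r$. Hence $B_{d_A}(x,r)$ lies in the component $V$ of $f_A^{-1}(B_\rho(y,s))$ containing $x$, for any $s\ge r$. Choose $s\in[r,2r)$ so that $\partial B_\rho(y,s)$ avoids the finite set $A\cup\{\infty\}$. Then $s<1/5<\pi/4$, and Lemma~\ref{lem8.4} applies: $V$ is an open disk, so it is contractible in itself. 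The inclusion of $B_{d_A}(x,r)$ is therefore null-homotopic inside $V$.

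It remains to place $V$ inside $B_{d_A}(x,5r)$. The crude bound of Lemma~\ref{lem4.4} gives $\diam V\le 2s\,d_V$, and this depends on the degree. That is useless here, because the data $(5,1/10)$ must be uniform in $d$. So I would instead use the disk structure together with the geometry of the cone surface. The key claim is that every point $z\in V$ can be joined to $x$ inside $V$ by a path whose image has length at most about $2s$.

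To prove the claim, work in the coordinate $\varphi$ of Lemma~\ref{lem8.4}. There $f_A=P\circ\varphi$ with $P$ a polynomial, or, in the case $\infty\in B$, $V$ is a totally ramified cone neighborhood. Lift radial geodesics of the target disk $B$ emanating from $f_A(z)$ and from $x$ to a common point. The obstacle is that such lifts may be stopped at branch points, or may end on different sheets.

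I would handle this using the cone-surface picture. By Lemma~\ref{lem8.3}, $(V,d_A)$ is a simply connected spherical cone surface, small enough to be essentially flat. All its cone angles are at least $2\pi$, so $V$ is locally $\mathrm{CAT}(0)$-like after a bi-Lipschitz comparison. The target disk $B$ is convex with curvature bounded above. I would argue that the closure $\overline V$, being a disk whose boundary maps locally isometrically onto the circle $\partial B$, is locally convex along its boundary: the boundary has no branch points and curves the same way as $\partial B$. By a Cartan--Hadamard/Alexandrov-type globalization for simply connected nonpositively curved surfaces with locally convex boundary, $\overline V$ is then geodesically convex in its intrinsic metric. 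Each point of $V$ is also within intrinsic distance less than $s$ of a lift of the center $y$. The remaining difficulty is bounding the distance between different lifts of $y$ within $V$ independently of $d$.

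I expect this uniformity to be the main obstacle. If the direct bound resists, I would fall back on contracting $V$ not to a point but along intrinsic geodesics to $x$. Convexity of $\overline V$ makes the geodesic contraction $H(z,t)$ continuous and keeps it inside $\overline V$. Its image points then lie at $d_A$-distance from $x$ no larger than the intrinsic distance from $z$ to $x$. So the whole task reduces to the intrinsic diameter estimate $\operatorname{diam} V\le 4s<5r\cdot 2$. The constants $5$ and $1/10$ suggest this is exactly what is needed, with $4s<8r$ absorbed after taking $s$ close to $r$.
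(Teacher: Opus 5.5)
\textbf{There is a genuine gap.} The reduction in your second paragraph fails. You claim that every point of $V$ is joined to $x$ by a path of image length about $2s$, and hence that $V\subset B_{d_A}(x,5r)$ (your estimate $\diam V\le 4s$). This is false for these slit surfaces, whatever $s\ge r$ you choose.

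A single branch value already breaks the $2s$ claim. Put $a_1\in B(y,s)$ just inside $\partial B(y,s)$, on the side facing away from $0$. The lift to $\Sigma_2$ of the boundary point opposite $a_1$ is then at $d_A$-distance about $3s$ from $x$, because every path to it must meet $\mathcal R_{a_1}$.

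To beat $5r$, use several branch values:
\begin{itemize}
\item Work at a scale so small that the slits are nearly parallel, and take $\rho<r$ close to $r$.
\item Let $A=\{a_1,\dots,a_{K-1}\}$, placed in increasing argument just inside the half of $\partial B(y,\rho)$ facing away from $0$. Put $a_1$ and $a_{K-1}$ near the two ends of that half-circle and some $a_k$ near its midpoint.
\item For every $s\ge r$, the component $V$ is then all of $f_A^{-1}(B(y,s))$.
\item Take $z\in\Sigma_K$ lying over a point near $a_1$.
\end{itemize}
The sheet index passes from $k$ to $k+1$ only across $\mathcal R_{a_k}$. So any path from $x\in\Sigma_1$ to $z$ must meet $\mathcal R_{a_1},\dots,\mathcal R_{a_{K-1}}$ in this order. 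In coordinates where the slits are horizontal rays, its image moves vertically by at least about $\rho+2\rho+2\rho$ and horizontally by at least about $2\rho$. Hence $d_A(x,z)\gtrsim\sqrt{29}\,\rho>5r$. No argument that contracts all of $V$ can produce the constant $5$.

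\textbf{What is missing is that you only need to contract a ball, not $V$.} You already have the ingredients.
\begin{itemize}
\item Pull back to $\overline V$ the Euclidean metric of the normal-coordinate disk $\exp_y^{-1}(\overline B(y,s))$, and let $\sigma$ be its intrinsic metric. This is the precise form of your ``essentially flat'' comparison. It is needed because $\mathrm{CAT}(0)$ is not a bi-Lipschitz invariant, so convexity must be proved for $\sigma$ itself.
\item Your argument with cone angles at least $2\pi$, locally convex boundary, and Cartan--Hadamard then makes $(\overline V,\sigma)$ globally $\mathrm{CAT}(0)$.
\item A $d_A$-geodesic from $x$ of length less than $r$ stays in $V$, and the chart is $2$-bi-Lipschitz. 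This gives $B_{d_A}(x,r)\subset B_\sigma(x,2r)\subset B_{d_A}(x,4r)$.
\item Balls in a $\mathrm{CAT}(0)$ space are convex. So the geodesic contraction to $x$ that you proposed, restricted to $z\in B_{d_A}(x,r)$, moves each $z$ only along $[x,z]_\sigma$. It therefore stays inside $B_\sigma(x,2r)\subset B_{d_A}(x,5r)$.
\end{itemize}
Your own remark that points on the geodesic are no farther from $x$ than $z$ finishes the proof once it is applied to $z$ in the ball rather than to all of $V$. The diameter of $V$ never enters. This is exactly the paper's proof, which takes $s<21r/20$.
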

\begin{proof}
Let $x\in X_A$, $0<r<1/10$, and $y=f_A(x)$. Choose $r'\in(r,21r/20)$ such that $\partial B(y,r')$ misses every branch value, including $\infty$. Let $W$ be the component of $f_A^{-1}(B(y,r'))$ containing $x$. A path from $x$ of $d_A$-length less than $r$ projects inside $B(y,r')$, so $B_{d_A}(x,r)\subset W$. By Lemma~\ref{lem8.4}, $\overline W$ is a compact disk.

On $\overline W$ pull back the Euclidean metric of the normal-coordinate disk $\exp_y^{-1}(\overline B(y,r'))$, and let $\sigma$ be its intrinsic length metric. This is a compact flat surface with cone angle $2\pi i(z,f_A)\ge2\pi$ at each branch point. The angle can be $2\pi(d+1)$ if the component contains the point over $\infty$. There are no branch points on the boundary, which is locally the convex side of a Euclidean circle. Consequently $\overline W$ is locally $\mathrm{CAT}(0)$, including at its boundary; the cone criterion and the Cartan--Hadamard theorem~\cite[Chapters~II.3--II.4]{BH1999} make the complete simply connected length space $(\overline W,\sigma)$ globally $\mathrm{CAT}(0)$.

The exponential chart on this small round ball is $2$-bi-Lipschitz. For $z\in B_{d_A}(x,r)$, a $d_A$-geodesic from $x$ to $z$ stays in $W$ and has $\sigma$-length less than $2r$. Thus $B_{d_A}(x,r)\subset B_\sigma(x,2r)$. Conversely a path in $\overline W$ of $\sigma$-length less than $2r$ has $d_A$-length less than $4r$, so $B_\sigma(x,2r)\subset B_{d_A}(x,4r)$. Convexity of balls in a $\mathrm{CAT}(0)$ space gives a continuous geodesic contraction to $x$ inside $B_\sigma(x,2r)$. Its restriction to $B_{d_A}(x,r)$ is the required null-homotopy in $B_{d_A}(x,5r)$. All constants are independent of $A$ and $d$.
\end{proof}

\subsection{Proof of Theorem~\ref{thm1.8}}
\begin{proof}
\textup{(i)} We first work in dimension two. Fix $c$ with $|c|=1/2$. Choose $R_0>0$ so small that $B_\rho(c,3R_0)\subset\{0<|z|<1\}$ and that $R_0$ is below the threshold of Theorem~\ref{thm6.1} for the data $n=2$, $L=1$, $\lambda=5$, $r_*=1/10$ and the round target. This threshold is independent of $m$.

Fix $0<R<R_0$ and put $d=m-1$. For sufficiently large $d$, the round ball $B_\rho(c,2R)$ has an $h$-net of at most $d$ points, containing $c$, with
\[
 h=C R/\sqrt d,
\]
where $C$ is an absolute constant. To justify the cardinality, take a maximal $h$-separated set containing $c$. Disjoint balls of radius $h/2$ about its points lie in $B_\rho(c,3R)$, so the round area bounds give at most $C_1(R/h)^2$ points when $h<R$. Choose $C$ with $C_1/C^2<1/2$, and enlarge the lower threshold on $d$ if necessary. Maximality supplies the net property. Perturb the other points by less than $h/10$ so that their arguments, and the argument of $c$, are distinct. Add further points with distinct arguments, if needed, to obtain exactly $d$ points in $B_\rho(c,3R)$. The resulting set $A$ remains a $2h$-net of $B_\rho(c,2R)$ and contains $c$.

Lemma~\ref{lem8.3} and Proposition~\ref{prop8.5} give a degree-$m$ $1$-BLD map from a geodesic sphere with data $(5,1/10)$ and branch values $A\cup\{\infty\}$. Every ball of radius greater than $2h$ contained in $B_\rho(c,R)$ meets $A$, since its center lies in the region covered by the net. This gives the required bound $C_0m^{-1/2}R$, after changing the absolute constant. The finitely many smaller values of $d$ are covered by increasing $C_0$ so that the assertion about contained balls is vacuous for those values, and using any $d$ distinct points including $c$. Because $R$ was arbitrary, the obstruction applies below any positive threshold controlled only by the listed data.

For $n>2$, let $T=\R^{n-2}/\Z^{n-2}$ with its fixed flat metric, and use the $\ell^2$ product length metrics on
\[
 M_m=X_A\times T,\qquad N_n=\widehat\C\times T,
 \qquad F_m=f_A\times\id_T.
\]
For $r<1/10$, a product ball lies in the product of the radius-$r$ balls in its factors. Contracting both factors contracts it inside the radius-$5\sqrt2 r$ product ball. Thus the data $(5\sqrt2,1/10)$ are uniform. The map is open, discrete and orientation-preserving of degree $m$, and its branch-value set is $(A\cup\{\infty\})\times T$. For a rectifiable product path, preservation of the lengths of every subpath in the first factor preserves its metric speed almost everywhere; the product speed formula proves length preservation. If a product path is nonrectifiable, at least one coordinate path is nonrectifiable, and its image still has infinite length. Hence $F_m$ is $1$-BLD in the convention of~\eqref{eq2.3}. A product ball projects onto a round ball of the same radius, so a ball avoiding the branch-value set projects onto one avoiding $A\cup\{\infty\}$. The two-dimensional net estimate applies. Decrease $R_0$ for each $n$ to be below the threshold of Theorem~\ref{thm6.1} for these product data.

\textup{(ii)} For a degree-$m$ branched cover of $\mathbb S^2$ by $\mathbb S^2$, Sto\"ilow's theorem and the Riemann--Hurwitz formula give
\[
 \#f(B_f)\le\#B_f\le\sum_{x\in B_f}(i(x,f)-1)=2m-2.
\]
At every fixed sufficiently small round scale $R$, at least $c_1(R/\varrho)^2$ disjoint balls of radius $\varrho$ can be packed in $B(y_0,R)$. With $\varrho=cm^{-1/2}R$ and $c>0$ small enough, their number exceeds $2m-2$, so one ball contains no branch value. This proves the claimed uniform lower bound. The examples in \textup{(i)} prove its sharp order for the sphere-to-sphere class.
\end{proof}

\begin{remark}\label{rem8.6}
For sphere-to-sphere covers the exponential dependence in the particular constant of Theorem~\ref{thm6.1} is therefore not sharp; the optimal order in that class is $m^{-1/2}$. This does not identify the optimal constant for general closed surface sources or for $n\ge3$. Lemma~\ref{lem6.5} distinguishes a lower bound for the diameter of some fiber from a lower bound for the separation of all its points. The recursion supplies the latter indirectly by splitting inverse-image components, and can lose quantitative information at every step.
\end{remark}

\section{Open questions}\label{sec9}

\subsection{Optimal dependence of porosity on the degree}
With fixed geometric data and an $m$-independent scale threshold, Theorem~\ref{thm6.1} gives $\delta_m=\tfrac12(a/(m+1))^{m-1}$, where $a=\varepsilon_0/[8L(2\sqrt n+L)]$. Theorem~\ref{thm1.8} gives the sharp planar order $m^{-1/2}$; its product examples exclude slower decay for the general manifold class in higher dimensions.

\begin{question}\label{que9.1}
In the class of Theorem~\ref{thm1.5} with fixed $n\ge3$, $N$, $L$, $\lambda$, and $r_*$, can the porosity bound at an $m$-independent positive scale threshold be improved to $c\,m^{-\alpha}$, with $c,\alpha>0$ independent of $m$?
\end{question}

\subsection{A geometric criterion for the embedded branch locus}
Fix $n\ge3$ and $m\ge2$, let $h\colon\Sn\to\Sn$ be an orientation-preserving homeomorphism, and put $A=h(E)$.

\begin{lemma}\label{lem9.2}
If $h'\colon\Sn\to\Sn$ is another orientation-preserving homeomorphism with $h'(E)=A$, there is a source homeomorphism $H$ such that $h\circ q_m=h'\circ q_m\circ H$ and
\[
 d_{h\circ q_m}(x,x')=d_{h'\circ q_m}(H(x),H(x')).
\]
Thus admissibility depends only on the embedded set $A$ and the degree $m$.
\end{lemma}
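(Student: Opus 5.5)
The plan is to reduce to a homeomorphism $g=h'^{-1}\circ h$ of $\Sn$ preserving $E$, and lift it through $q_m$. Put $g=h'^{-1}\circ h$; it is an orientation-preserving homeomorphism with $g(E)=E$, so it restricts to a homeomorphism of the complement $\Sn\setminus E$. The identity $h\circ q_m=h'\circ q_m\circ H$ is equivalent to $q_m\circ H=g\circ q_m$. So the task is to find a lift $H$ of $g$ through $q_m$. Once $H$ exists, the metric identity is formal: a path $\gamma$ joins $x,x'$ if and only if $H\circ\gamma$ joins $H(x),H(x')$, and
\[
 \ell_\rho\bigl(h\circ q_m\circ\gamma\bigr)=\ell_\rho\bigl(h'\circ q_m\circ H\circ\gamma\bigr),
\]
so the infima in \eqref{eq1.1} agree. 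Hence $H$ is an isometry from $d_{h\circ q_m}$ to $d_{h'\circ q_m}$, and linear local contractibility of one pullback metric is equivalent to that of the other. Theorem~\ref{thm1.4}(iii) then shows that admissibility depends only on $A$ and $m$.

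To construct $H$, I will use covering space theory on $Y=\Sn\setminus E$. By Lemma~\ref{lem7.1}, $q_m$ restricts to an $m$-sheeted covering $Y\to Y$, connected since $q_m$ is the winding map. The complement $Y$ deformation retracts to the circle $\{u=0,\ |z|=1\}$, so $\pi_1(Y)\cong\Z$, generated by a meridian $\mu$ of $E$. This holds since $\Sn\setminus\mathbb S^{n-2}\simeq\mathbb S^1$, valid for $n\ge3$. The covering $q_m|_Y$ corresponds to the subgroup $m\Z$, which is characteristic: every automorphism of $\Z$ is $\pm1$ and preserves $m\Z$. Therefore $g\circ q_m|_Y\colon Y\to Y$ satisfies the lifting criterion (\cite[Proposition~1.33]{Ha2002}). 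Since $(g\circ q_m)_*\pi_1(Y)=g_*(m\Z)=m\Z=(q_m)_*\pi_1(Y)$, there is a lift $H_Y\colon Y\to Y$ with $q_m\circ H_Y=g\circ q_m$. Applying the same argument to $g^{-1}$ gives a lift $H'_Y$. Composing the two lifts gives a deck-type map, and after correcting $H'_Y$ by a deck transformation it is a two-sided inverse. So $H_Y$ is a homeomorphism. Orientation plays no role here.

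The remaining step, which I expect to be the main technical point, is extending $H_Y$ over $E$ by setting $H=g$ on $E$. This is forced because $q_m$ is the identity on $E$ and $q_m^{-1}(a)=\{a\}$ for $a\in E$. For continuity at $a\in E$, take $x_k\to a$ with $x_k\in Y$. Then $q_m(H_Y(x_k))=g(q_m(x_k))\to g(a)$. Since $q_m$ is proper and $q_m^{-1}(g(a))=\{g(a)\}$, every subsequence of $H_Y(x_k)$ has a further subsequence converging to $g(a)$. The same argument applies to the extension of the inverse. Thus $H$ is a homeomorphism of $\Sn$ with $q_m\circ H=g\circ q_m$, which is the required identity.
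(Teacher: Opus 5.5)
Your proof is correct and follows essentially the same route as the paper: set $g=h'^{-1}\circ h$, lift $g$ through the winding cover of $\Sn\setminus E$ (which corresponds to $m\Z$, preserved by every automorphism of $\Z$) via the lifting criterion, extend over $E$ using compactness and the singleton fibers, and transport paths by $H$ in \eqref{eq1.1}. Your deck-transformation correction for the inverse and your subsequence argument for continuity along $E$ spell out steps that the paper states in one line each.
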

\begin{proof}
Put $\Phi=h'^{-1}\circ h$, so $\Phi(E)=E$. The complement $\Sn\setminus E$ has fundamental group $\Z$, and its winding cover corresponds to $m\Z$. Since $\Phi_*(m\Z)=m\Z$, the covering-space lifting criterion~\cite[Proposition~1.33]{Ha2002} gives a covering isomorphism $H$ with $\Phi\circ q_m=q_m\circ H$ off $E$. Each fiber over $E$ is a singleton, so compactness extends $H$ and its inverse continuously over $E$. The resulting homeomorphism satisfies the asserted factorization, and bijecting paths by $H$ in~\eqref{eq1.1} proves the metric identity.
\end{proof}

Recall that $A$ has \emph{bounded turning} if every $a,b\in A$ lie in a subcontinuum $K\subset A$ with $\diam_\rho K\le C\rho(a,b)$ for one constant $C$~\cite{H2001}. Corollary~\ref{cor6.3} requires uniform porosity for admissibility, so we impose that condition explicitly, without assuming that bounded turning alone implies ambient porosity.

\begin{question}\label{que9.3}
For $n=3$ and a uniformly porous circle $A=h(E)$, is $h\circ q_m$ admissible if and only if $A$ has bounded turning?
\end{question}

The porous cusp example fails bounded turning, whereas the quasisymmetric examples satisfy it. Neither implication for an arbitrary $h$ is proved here: the essential-loop argument requires control of the topology of inverse-image neighborhoods.

\subsection{Universal preservation under target changes}
\begin{remark}\label{rem9.4}
If $f$ is admissible and target and source homeomorphisms $\Phi,H$ satisfy $\Phi\circ f=f\circ H$, then $d_{\Phi\circ f}(x,x')=d_f(H(x),H(x'))$, so $\Phi\circ f$ is admissible. The lifting argument in Lemma~\ref{lem9.2} applies to every $\Phi$ preserving $E$. Such maps, including homeomorphisms supported in a ball disjoint from $E$, need not be quasisymmetric.
\end{remark}

\begin{question}\label{que9.5}
Fix $n\ge3$. If an orientation-preserving homeomorphism $\Phi\colon\Sn\to\Sn$ makes $\Phi\circ f$ admissible for every admissible orientation-preserving branched cover $f\colon\Sn\to\Sn$ of degree at least two, must $\Phi$ be quasisymmetric?
\end{question}

Theorem~\ref{thm7.5} proves the converse implication. Degree-one covers are admissible after every target homeomorphism, so only branching can test this property.

\section*{Acknowledgments}
This work was supported by the Visiting Scholar Program of Chern Institute of Mathematics at Nankai University. The author would
like to express his hearty thanks to Chern Institute of Mathematics provided very comfortable research environments to him worked as visiting scholar.
This work was also supported by the Guangdong Basic and Applied Basic Research Foundation (Nos.\ 2022A1515110967 and 2023A1515011809). The author used GPT 5.6 sol for editorial, organizational and typesetting assistance. All mathematical statements, derivations and references were checked and are the responsibility of the author.


\begin{thebibliography}{99}

\bibitem{A1964}
L. V. Ahlfors,
Extension of quasiconformal mappings from two to three dimensions,
\emph{Proc. Nat. Acad. Sci. U.S.A.} \textbf{51} (1964), no.~5, 768--771.

\bibitem{AP2000}
S. Alpern and V. S. Prasad,
\emph{Typical Dynamics of Volume Preserving Homeomorphisms},
Cambridge Tracts in Mathematics, vol.~139, Cambridge University Press, Cambridge, 2000.

\bibitem{BMW2025}
G. Basso, D. Marti, and S. Wenger,
Geometric and analytic structures on metric spaces homeomorphic to a manifold,
\emph{Duke Math. J.} \textbf{174} (2025), no.~9, 1723--1773.

\bibitem{BB2018}
A. Bj\"orn and J. Bj\"orn,
Local and semilocal Poincar\'e inequalities on metric spaces,
\emph{J. Math. Pures Appl.} \textbf{119} (2018), 158--192.

\bibitem{BH2004}
M. Bonk and J. Heinonen,
Smooth quasiregular mappings with branching,
\emph{Publ. Math. Inst. Hautes \'Etudes Sci.} \textbf{100} (2004), 153--170.

\bibitem{BH1999}
M. R. Bridson and A. Haefliger,
\emph{Metric Spaces of Non-positive Curvature},
Grundlehren der mathematischen Wissenschaften, vol.~319, Springer-Verlag, Berlin, 1999.

\bibitem{BBI2001}
D. Burago, Y. Burago, and S. Ivanov,
\emph{A Course in Metric Geometry},
Graduate Studies in Mathematics, vol.~33, American Mathematical Society, Providence, RI, 2001.

\bibitem{C1964}
A. V. \v Cernavski\u\i,
Finite-to-one open mappings of manifolds,
\emph{Mat. Sb. (N.S.)} \textbf{65(107)} (1964), 357--369.

\bibitem{CH1960}
P. T. Church and E. Hemmingsen,
Light open maps on $n$-manifolds,
\emph{Duke Math. J.} \textbf{27} (1960), 527--536.

\bibitem{DPT2026}
G. C. David, P. Pankka, and J. T. Tyson,
The Heinonen--Semmes problems after thirty years,
\emph{Conform. Geom. Dyn.} \textbf{30} (2026), 86--134.

\bibitem{Fa1980}
A. Fathi,
Structure of the group of homeomorphisms preserving a good measure on a compact manifold,
\emph{Ann. Sci. \'Ecole Norm. Sup. (4)} \textbf{13} (1980), no.~1, 45--93.

\bibitem{F1950}
E. E. Floyd,
Some characterizations of interior maps,
\emph{Ann. of Math. (2)} \textbf{51} (1950), 571--575.

\bibitem{GV1973}
F. W. Gehring and J. V\"ais\"al\"a,
Hausdorff dimension and quasiconformal mappings,
\emph{J. London Math. Soc. (2)} \textbf{6} (1973), no.~3, 504--512.

\bibitem{GW2016a}
C.-Y. Guo and M. Williams,
The branch set of a quasiregular mapping between metric manifolds,
\emph{C. R. Math. Acad. Sci. Paris} \textbf{354} (2016), no.~2, 155--159.

\bibitem{GW2016b}
C.-Y. Guo and M. Williams,
Geometric function theory: the art of pullback factorization,
\emph{arXiv:1611.02478v1} (2016).

\bibitem{GW2017}
C.-Y. Guo and M. Williams,
Porosity of the branch set of discrete open mappings with controlled linear dilatation,
\emph{arXiv:1509.02638v2} (2017); first version posted in 2015.

\bibitem{HM2015}
P. Haj\l asz and S. Malekzadeh,
A new characterization of the mappings of bounded length distortion,
\emph{Int. Math. Res. Not. IMRN} \textbf{2015} (2015), no.~24, 13238--13244.

\bibitem{HMZ2018}
P. Haj\l asz, S. Malekzadeh, and S. Zimmerman,
Weak BLD mappings and Hausdorff measure,
\emph{Nonlinear Anal.} \textbf{177} (2018), 524--531.

\bibitem{Ha2002}
A. Hatcher,
\emph{Algebraic Topology},
Cambridge University Press, Cambridge, 2002.

\bibitem{H2001}
J. Heinonen,
\emph{Lectures on Analysis on Metric Spaces},
Universitext, Springer-Verlag, New York, 2001.

\bibitem{HK1998}
J. Heinonen and P. Koskela,
Quasiconformal maps in metric spaces with controlled geometry,
\emph{Acta Math.} \textbf{181} (1998), no.~1, 1--61.

\bibitem{HKST2015}
J. Heinonen, P. Koskela, N. Shanmugalingam, and J. T. Tyson,
\emph{Sobolev Spaces on Metric Measure Spaces: An Approach Based on Upper Gradients},
New Mathematical Monographs, vol.~27, Cambridge University Press, Cambridge, 2015.

\bibitem{HR1998}
J. Heinonen and S. Rickman,
Quasiregular maps $S^3\to S^3$ with wild branch sets,
\emph{Topology} \textbf{37} (1998), no.~1, 1--24.

\bibitem{HR2002}
J. Heinonen and S. Rickman,
Geometric branched covers between generalized manifolds,
\emph{Duke Math. J.} \textbf{113} (2002), no.~3, 465--529.

\bibitem{HS1997}
J. Heinonen and S. Semmes,
Thirty-three yes or no questions about mappings, measures, and metrics,
\emph{Conform. Geom. Dyn.} \textbf{1} (1997), 1--12.

\bibitem{KTW2005}
R. Kaufman, J. T. Tyson, and J.-M. Wu,
Smooth quasiregular maps with branching in $\R^n$,
\emph{Publ. Math. Inst. Hautes \'Etudes Sci.} \textbf{101} (2005), 209--241.

\bibitem{Lee2018}
J. M. Lee,
\emph{Introduction to Riemannian Manifolds}, second ed.,
Graduate Texts in Mathematics, vol.~176, Springer, Cham, 2018.

\bibitem{LR2014}
F. Le Roux,
On closed subgroups of the group of homeomorphisms of a manifold,
\emph{J. \'Ec. polytech. Math.} \textbf{1} (2014), 147--159.

\bibitem{L2016}
R. Luisto,
Note on local-to-global properties of BLD-mappings,
\emph{Proc. Amer. Math. Soc.} \textbf{144} (2016), no.~2, 599--607.

\bibitem{L2017}
R. Luisto,
A characterization of BLD-mappings between metric spaces,
\emph{J. Geom. Anal.} \textbf{27} (2017), no.~3, 2081--2097.

\bibitem{L2019}
R. Luisto,
A Newman property for BLD-mappings,
\emph{Conform. Geom. Dyn.} \textbf{23} (2019), 135--146.

\bibitem{LP2020}
R. Luisto and P. Pankka,
Sto\"ilow's theorem revisited,
\emph{Expo. Math.} \textbf{38} (2020), no.~3, 303--318.

\bibitem{Luu1998}
J. Luukkainen,
Assouad dimension: antifractal metrization, porous sets, and homogeneous measures,
\emph{J. Korean Math. Soc.} \textbf{35} (1998), no.~1, 23--76.

\bibitem{MR1973}
O. Martio and S. Rickman,
Measure properties of the branch set and its image of quasiregular mappings,
\emph{Ann. Acad. Sci. Fenn. Ser. A I} \textbf{541} (1973), 16 pp.

\bibitem{MV1988}
O. Martio and J. V\"ais\"al\"a,
Elliptic equations and maps of bounded length distortion,
\emph{Math. Ann.} \textbf{282} (1988), no.~3, 423--443.

\bibitem{M1995}
P. Mattila,
\emph{Geometry of Sets and Measures in Euclidean Spaces: Fractals and Rectifiability},
Cambridge Studies in Advanced Mathematics, vol.~44, Cambridge University Press, Cambridge, 1995.

\bibitem{MR1983}
L. F. McAuley and E. E. Robinson,
On Newman's theorem for finite-to-one open mappings on manifolds,
\emph{Proc. Amer. Math. Soc.} \textbf{87} (1983), no.~3, 561--566.

\bibitem{MR1984}
L. F. McAuley and E. E. Robinson,
Discrete open and closed mappings on generalized continua and Newman's property,
\emph{Canad. J. Math.} \textbf{36} (1984), no.~6, 1081--1112.

\bibitem{N1931}
M. H. A. Newman,
A theorem on periodic transformations of spaces,
\emph{Q. J. Math. Oxf. Ser.} \textbf{2} (1931), 1--8.

\bibitem{OU1941}
J. C. Oxtoby and S. M. Ulam,
Measure-preserving homeomorphisms and metrical transitivity,
\emph{Ann. of Math. (2)} \textbf{42} (1941), 874--920.

\bibitem{R1973}
S. Rickman,
Path lifting for discrete open mappings,
\emph{Duke Math. J.} \textbf{40} (1973), 187--191.

\bibitem{R1993}
S. Rickman,
\emph{Quasiregular Mappings},
Ergebnisse der Mathematik und ihrer Grenzgebiete (3), vol.~26, Springer-Verlag, Berlin, 1993.

\bibitem{Sa1975}
J. Sarvas,
The Hausdorff dimension of the branch set of a quasiregular mapping,
\emph{Ann. Acad. Sci. Fenn. Ser. A I Math.} \textbf{1} (1975), no.~2, 297--307.

\bibitem{S1996}
S. Semmes,
Finding curves on general spaces through quantitative topology, with applications to Sobolev and Poincar\'e inequalities,
\emph{Selecta Math. (N.S.)} \textbf{2} (1996), no.~2, 155--295.

\bibitem{St1928}
S. Sto\"ilow,
Sur les transformations continues et la topologie des fonctions analytiques,
\emph{Ann. Sci. \'Ecole Norm. Sup. (3)} \textbf{45} (1928), 347--382.

\bibitem{TV1982}
P. Tukia and J. V\"ais\"al\"a,
Quasiconformal extension from dimension $n$ to $n+1$,
\emph{Ann. of Math. (2)} \textbf{115} (1982), no.~2, 331--348.

\bibitem{V1966}
J. V\"ais\"al\"a,
Discrete open mappings on manifolds,
\emph{Ann. Acad. Sci. Fenn. Ser. A I} \textbf{392} (1966), 10 pp.

\bibitem{V1971}
J. V\"ais\"al\"a,
\emph{Lectures on $n$-Dimensional Quasiconformal Mappings},
Lecture Notes in Mathematics, vol.~229, Springer-Verlag, Berlin, 1971.

\end{thebibliography}
\end{document}